\documentclass[10pt, a4paper]{article} 

\usepackage[utf8]{inputenc} 

\usepackage{graphicx} 
\usepackage[english]{babel}
\usepackage{xcolor}

\usepackage{amsfonts,amsmath,amssymb,bbm,amsthm,amsbsy}
\usepackage{tikz}
\usetikzlibrary{calc}
\usetikzlibrary{cd}

\usepackage[pagebackref,%colorlinks,linkcolor=black,citecolor=black%
    ,pdfborder={0 0 0}
  ,urlcolor=black,a4paper,hypertexnames=false]{hyperref}

\usepackage{color}
\usepackage{pdfcolmk}

\makeatletter
\def\blfootnote{\xdef\@thefnmark{}\@footnotetext}
\makeatother
\date{\today%
    \protect\blfootnote{\copyright{\ F.~Hofmann, C.~L\"oh 2026}. 
      This work was partially supported by the CRC~1085 \emph{Higher Invariants} and 
      the CRC~1785 \emph{Generalised Motivic Methods in Geometry}
      (Universit\"at Regensburg, funded by the DFG).
      Parts of this article are based on questions that arose
      during the Oberwolfach Seminar \emph{Metric Topology of
        Aspherical Spaces} (MFO, 2024).
    \\
    MSC~2020 classification:
    55U15, %Chain complexes in algebraic topology
    55N10, %Singular homology and cohomology theory
    55U10, %Simplicial sets and complexes in algebraic topology
    55U25%  %Homology of a product, Künneth formula
    }}

\usepackage[nouppercase]{scrlayer-scrpage}

\automark[subsection]{section}
\usepackage{ifthen}
\deftripstyle{myheadings}%{\headmark}{}{\pagemark}%
             {\ifthenelse{\isodd{\value{page}}}{\leftmark}{\pagemark}}
             {}
             {\ifthenelse{\isodd{\value{page}}}{\pagemark}{\rightmark}}% 
             {}{}{}% 
\def\args{\;\cdot\;}

\newtheorem{thm}{Theorem}[section]
\newtheorem{lemma}[thm]{Lemma}
\newtheorem{prop}[thm]{Proposition}
\newtheorem{cor}[thm]{Corollary}

\newtheorem*{claim*}{Claim}

\theoremstyle{definition}
\newtheorem{defi}[thm]{Definition}
\newtheorem{example}[thm]{Example}
\newtheorem{setup}[thm]{Setup}
\newtheorem{question}[thm]{Question}

\newtheorem{notation}[thm]{Notation}
\newtheorem{rem}[thm]{Remark}

\newcommand{\R}{\mathbb{R}}
\newcommand{\F}{\mathbb{F}}
\newcommand{\Q}{\mathbb{Q}}
\newcommand{\Z}{\mathbb{Z}}
\newcommand{\N}{\mathbb{N}}

\DeclareMathOperator{\MatM}{M}
\def\mat#1#2#3{\MatM_{#1 \times #2}(#3)}

\DeclareMathOperator{\cell}{cell}
\def\ccell#1{%
  C_{#1}^{\cell}}
\DeclareMathOperator{\GL}{GL}

\newcommand{\seminorm}[3]{\ensuremath{\left\| {#1}\right\|}_{#2}^{#3}} % norm with 1=input 2=subscript 3=supscript
\newcommand{\normsum}[1]{\ensuremath{\seminorm{{#1}}{\otimes}{}}} % norm on left side of Künneth, direct sum of tensor of homologies
\newcommand{\normtensorcplx}[2]{\ensuremath{\left|{{#1}}\right|^{{#2}\otimes D}}} % norm on C\otimes_\R D
\newcommand{\normhom}[2]{\ensuremath{\left\| {#1}\right\|^{{#2}\otimes D}}} % norm on homology of C$\otimes_\R$ D, right side of Künneth
\newcommand{\bignormhom}[2]{\ensuremath{\bigl\| {#1}\bigr\|^{{#2}\otimes D}}} % big norm on homology of C$\otimes_\R$ D, right side of Künneth

\DeclareMathOperator{\id}{id}
\DeclareMathOperator{\im}{im}
\DeclareMathOperator{\Hom}{Hom}

\DeclareMathOperator{\rk}{rk}
\DeclareMathOperator{\ev}{ev} % evaluation map
\DeclareMathOperator{\Null}{N} % Nullspace of semi-normed space

\DeclareMathOperator{\mCh}{{\sf mCh}}
\DeclareMathOperator{\finop}{{\sf fin}}
\def\mChfin#1{%
  \mCh^{\finop}_k}

\def\qand{%
  \quad\text{and}\quad}
\def\fa#1{%
  \forall_{#1}\;\;\;}
\def\exi#1{%
  \exists_{#1}\;\;\;}

\def\epsilon{\varepsilon}

\title{Quantitative chain extensions\\ and integral cohomological fillings}
\author{Franziska Hofmann and Clara L\"oh}

\begin{document}

\maketitle

\begin{abstract}
  We develop two methods in quantitative homological 
  algebra: A declarative approach to
  integral cohomological filling problems and 
  a quantitatively controlled replacement of
  normed chain complexes by their homology complex.
  Both methods are connected through so-called
  chain extensions. As applications, we obtain
  transparent proofs for uniform integral simplicial
  cohomological fillings for subdivisions of simplicial
  complexes, for uniform integral cohomological
  fillings of certain finite coverings, and for lower
  bounds of the $\ell^1$-semi-norm on product spaces.
\end{abstract}

%%%%%%%%%%%%%%%%%%%%%%%%%%%%%%%%%%%%%%%%%%%%%%%%%%%%%%%%%%%%%%%%%
\section{Introduction}

Quantitative homological algebra enriches chain
complexes or homology with norms. 
In geometric topology, quantitative homological
algebra is useful in many contexts; examples relevant for
this paper are geometric rigidity via simplicial volume or bounded cohomology~\cite{vbc},
quantitative cobordism via integral cohomological fillings~\cite{CDMW},
and waist inequalities via integral cohomological fillings~\cite{bader_sauer}.

%%%%%%%%%%%%%%%%%%%%%%%%%%%%%%%%
\subsection*{Fundamental concepts}

We develop two methods in quantitative homological
algebra: 
\begin{itemize}
\item
  A declarative approach to quantitative cohomological lifting over~$\R/\Z$ 
  and thus to translating cohomological
  filling problems over~$\Z$ into cohomological filling
  problems over~$\R$.
\item
  Quantitatively controlled replacements 
  of normed $\R$-chain complexes by their homology complex,
  i.e., by chain complexes with trivial boundary operator.
\end{itemize}
Both methods are connected through the simple algebraic
concept of \emph{chain extensions} (Section~\ref{sec:chainext}). 
A chain extension in degree~$k$ is a map from the $k$-th chain module 
to the $k$-th homology extending the canonical projection from the
$k$-cycles to homology. In our quantitative context, we will
consider different norms of chain extensions. 

In the first method, we introduce size invariants on marked free chain
complexes measuring the homological complexity of the boundary
operator, while ignoring the size of the homology
(Section~\ref{sec:size}). Size invariants satisfy basic 
normalisation and monotonicity properties.

A concrete example of such a size invariant is the cohomological
lifting volume. By definition, it is the optimal quantity for lifts
under cohomological change-of-coefficients maps of the
form~$H^*(C_*;\R) \to H^*(C_*;\R/\Z)$. The monotonicity
of the cohomological lifting volume exhibits (norms of)
chain extensions as a pragmatic tool for upper bounds. 
The cohomological lifting volume controls the passage from
cohomological filling problems over~$\Z$ to cohomological filling
problems over~$\R$, which are usually more accessible.

A second example of a size invariant is the so-called gnarledness of
chain complexes, which is the optimal quantity for the size of chain
extensions with respect to cointegral bases in homology. By design,
gnarledness is the universal size invariant. This notion of
gnarledness is an algebraic version of the gnarledness of simplicial
complexes by Chambers, Dotterrer, Manin, and
Weinberger~\cite{CDMW}. In contrast with the geometric setting,
algebraic gnarledness enjoys a straightforward monotonicity.

In the second method, we use chain extensions to replace chain
complexes by chain complexes with trivial boundary operator.
Gnarledness measures the size of such replacements with
respect to cointegral bases in homology. Measuring the size of
chain extensions of normed chain
complexes instead with respect to the induced semi-norm
on homology leads to lower bounds for the induced semi-norm on
the homology of tensor product complexes.

%%%%%%%%%%%%%%%%%%%%%%%%%%%%%%%%
\subsection*{Applications}

We obtain transparent proofs
for uniform integral simplicial cohomological fillings
for subdivisions of simplicial complexes and for uniform
integral cellular cohomological fillings of certain
finite coverings. Furthermore, we obtain lower bounds
for the $\ell^1$-semi-norm on product spaces. We now
outline these applications in more detail.

For simplicial subdivisions, we have the following version of a result
of Chambers, Dotterrer, Manin, and
Weinberger~\cite[Lem\-ma~3.9]{CDMW}, which they use to quantitatively
control null-cobordisms. The original proof is based on geometric
gnarledness and the explicit construction of controlled spanning trees
in simplicial subdivisions. Our algebraic approach via the
cohomological lifting volume bypasses such geometric constructions and
is thus robust enough to work in a uniform way for all simplicial
subdivisions.

\begin{cor}[integral cohomological fillings for subdivisions;
  Corollary~\ref{cor:fillsubdiv}]
  Let $X$ be a finite simplicial complex, let $k \in \N$,
  and let $\F$ be a subfield of~$\R$. Then there exists
  a constant~$c \in \R_{>0}$ such that:
  If $X'$ is a simplicial subdivision of~$X$ and
  $f \in C_\Delta^k(X';\F)$ is a simplicial cochain such that $\delta f$ is a coboundary
  in~$C_\Delta^*(X';\Z)$, then there exists a cochain~$\widetilde f \in C_\Delta^k(X';\Z)$
  with
  \[ \delta \widetilde f = \delta f
  \qand
  |\widetilde f|_\infty \leq |f|_\infty +c.
  \]
\end{cor}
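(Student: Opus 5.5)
We reduce the filling problem to a lifting problem over $\R/\Z$ and then bound the lifting cost by a chain extension, using the subdivision chain map to compare $X'$ with the fixed complex $X$.

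\emph{Step 1: reduction.} Given $f$ with $\delta f = \delta g$ for some $g \in C^k_\Delta(X';\Z)$, the cochain $f - g$ is an $\R$-cocycle. Let $\bar f$ be its image in $C^k_\Delta(X';\R/\Z)$. It suffices to find a cocycle $z \in C^k_\Delta(X';\R)$ with $|z|_\infty \le c$ and $[z] \mapsto [\bar f]$ in $H^k(X';\R/\Z)$. Then $\bar f - \bar z = \delta \bar h$ for some $\bar h \in C^{k-1}_\Delta(X';\R/\Z)$. Lift $\bar h$ to a real cochain $h$ with $|h|_\infty \le 1$, so that $f - g - z - \delta h$ is integral. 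Now $f - z - \delta h$ differs from $g$ by an integral cocycle, hence is an integral cochain with coboundary $\delta f$.

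This candidate has norm up to $|f|_\infty + c + (k+1)$, not $|f|_\infty + c$. To control it we round instead: write $f = \lfloor f\rfloor + \{f\}$ pointwise. Then $\widetilde f := \lfloor f \rfloor + w$, with $w$ an integral cochain satisfying $\delta w = \delta\{f\}$ and $|w|_\infty \le c'$, gives $|\widetilde f|_\infty \le |f|_\infty + 1 + c'$. So we reduce to fractional cochains $\{f\}$ with $|\{f\}|_\infty < 1$ and $\delta\{f\}$ an integral coboundary. Here $c$ must absorb one filling-cost constant, uniform in $X'$. The subfield $\F$ plays no essential role beyond the cochain being real; rationality is not needed once we work over $\R/\Z$.

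\emph{Step 2: uniform lifting volume via chain extensions.} This is the cohomological lifting volume statement: every class in $H^k(X';\R/\Z)$ in the image of $H^k(X';\R)$ is represented by a real cocycle of sup-norm $\le c$, together with the fact that $\delta$ of a cochain of norm $<1$ is handled by the same bound. We use monotonicity of the lifting volume, bounded by norms of chain extensions. Let $s\colon C_*(X) \to C_*(X')$ be the subdivision chain map and $\pi\colon C_*(X') \to C_*(X)$ a simplicial approximation of the identity, so that $\pi\circ s = \id$ and both induce mutually inverse isomorphisms on homology.

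A chain extension for $X'$ in degree $k$ is then $E' = E \circ \pi_k$, where $E\colon C_k(X) \to H_k(X)$ is a fixed chain extension for $X$. Its norm, from $\ell^1$ on $C_k(X')$ into $H_k(X;\Z)$ measured with a cointegral basis, is bounded by $\|E\|\cdot\|\pi_k\|_{1\to 1}$. We have $\|\pi_k\|_{1\to1} \le 1$ since $\pi$ maps simplices to simplices or to $0$. Cointegrality is preserved because $\pi$ is integral and a homology isomorphism. By the monotonicity of the size invariant, the lifting volume of $C_*(X')$ in degree $k$ is bounded by a constant depending only on $X$ and $k$.

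\emph{Main obstacle.} The delicate point is matching the precise form of the paper's monotonicity statement for the lifting volume, in particular the additive constant being exactly $c$ (not $c$ plus a degree-dependent term), with the norms of $\pi$ and of the chain extension. We expect that the paper's definition of lifting volume is designed so that the filling statement follows with $c$ equal to that volume plus rounding. The step we would check most carefully is that simplicial approximation $\pi$ exists for every subdivision and has $\ell^1$-norm at most $1$ in each degree, uniformly in $X'$.
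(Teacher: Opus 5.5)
Your proposal is correct and is essentially the paper's proof: Step~1 is the integral filling estimate of Proposition~\ref{prop:intcohfill}, and Step~2 is Theorem~\ref{thm:liftsubdiv}, i.e., monotonicity of the lifting volume along a simplicial map $X' \to X$ of norm at most~$1$ that induces an integral homology isomorphism. Your pulled-back chain extension is the same mechanism, but it must be $H_k(\pi)^{-1}\circ E\circ\pi_k$ to land in $H_k(X')$. The rounding detour is unnecessary and does not remove the degree term anyway, since applying Step~1 to $\{f\}$ reintroduces it. The constant $c$ is allowed to depend on $k$, and the paper simply takes $c = \frac12(k+1) + L_k^\F(C_*^\Delta(X)) + 1$, using $\|\partial_k\| \le k+1$.
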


For this application, in addition to the basic notions, only
Sections~\ref{sec:size}, \ref{subsec:defliftingvol},
\ref{subsec:liftfill}, and Section~\ref{sec:liftsc} are needed. To put
our approach into context, we develop size invariants (Section~\ref{sec:size})
and algebraic gnarledness of chain complexes (Section~\ref{sec:gnarl}).
This includes the comparison between algebraic gnarledness and the cohomological lifting volume
(Section~\ref{subsec:liftgnarl}), the comparison between algebraic
gnarledness and geometric gnarledness (Section~\ref{subsec:gnarlgeo}),
a rephrasing of simplicial volume in terms of algebraic
gnarledness of the singular chain complex
(Section~\ref{subsec:gnarlsv}), and the universality of gnarledness
among size invariants (Section~\ref{subsec:gnarluniversal}). 

As a second application of the cohomological lifting volume, we
obtain a straightforward proof of the relation between
integral and real expansion constants of the cellular boundary
operator in certain finite coverings:

\begin{cor}[Corollary~\ref{cor:fincov}, Remark~\ref{rem:expansionindep}]
  Let $X$ be a finite CW-complex. Then there is a constant~$c \in \R_{>0}$
  such that: For every finite sheeted covering map~$p \colon \overline X \to X$
  (where $\overline X$ is equipped with
  the CW-structure induced by~$p$) and every~$k \in \N$  
  with~$\dim_\R H_k(\overline X;\R) = \dim_\R H_k(X;\R)$, we have
  \[ \Xi_k^\Z\bigl(\ccell*(\overline X) \bigr)
    \leq \Xi_k^\R\bigl(\ccell*(\overline X)\bigr)
    + c
  \]
\end{cor}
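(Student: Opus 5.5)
We have to guess the definitions. $\Xi_k^R$ is presumably the cohomological expansion (filling) constant of the cellular coboundary in degree $k$ with coefficients $R$: the smallest $\Xi$ such that every $R$-coboundary $b=\delta f \in C^{k+1}$ with $f$ an $R$-cochain (integral case: $b$ a coboundary over $\Z$) admits a filling $g$ with $\delta g = b$ and $|g|_\infty \le \Xi\,|b|_\infty$. Alternatively it is defined with an additive normalisation; I treat it as the optimal multiplicative constant, and the additive $c$ comes from lifting.

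The plan is to compare integral and real fillings via the cohomological lifting volume. Given an integral coboundary $b$, first fill it over $\R$ with $f\in C^k(\overline X;\R)$, $\delta f = b$, $|f|_\infty\le \Xi_k^\R |b|_\infty$. Then $f$ reduces mod~$\Z$ to an $\R/\Z$-cocycle. The lifting step (the lifting-to-filling statement of Section~\ref{subsec:liftfill}) replaces $f$ by an integral cochain $\widetilde f$ with $\delta\widetilde f=b$ and $|\widetilde f|_\infty\le|f|_\infty + L$, where $L$ is the degree-$k$ cohomological lifting volume of $\ccell*(\overline X)$. For $b\neq0$ integral we have $|b|_\infty\ge1$, so this yields $\Xi^\Z_k\le\Xi^\R_k+L$. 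It therefore suffices to bound the lifting volume of $\ccell*(\overline X)$ uniformly over all coverings satisfying the homology condition.

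For that uniform bound, I use monotonicity of the lifting volume under chain extensions. A chain extension $\ccell k(\overline X)\to H_k(\overline X;\R)$ of small norm, relative to a cointegral basis, bounds the lifting volume. Transfer and projection give maps $p_*\colon \ccell*(\overline X)\to\ccell*(X)$ and $\operatorname{tr}\colon \ccell*(X)\to\ccell*(\overline X)$, with $p_*\circ\operatorname{tr}=d\cdot\id$. The hypothesis $\dim H_k(\overline X;\R)=\dim H_k(X;\R)$ makes $p_*$ an isomorphism on real homology in degree~$k$, with inverse $\tfrac1d\operatorname{tr}_*$. So I take a fixed chain extension $e\colon\ccell k(X)\to H_k(X;\R)$ and pull it back as $\overline e := (p_*)^{-1}\circ e\circ p_*$. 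This is a chain extension for $\overline X$: on cycles it is $(p_*)^{-1}p_*=$ the projection to homology. Its norm is controlled because $p_*$ has $\ell^1$-norm $1$ on cells, which sends each cell of $\overline X$ to a cell of $X$. Dually, cellular cochains of $X$ pull back isometrically in $|\cdot|_\infty$. The cohomological lift over $\overline X$ is then obtained by pulling back a lift from $X$, so the lifting constant is bounded by that of $X$, which is a constant $c$ depending only on $X$.

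The main obstacle is integrality. The pulled-back $\R/\Z$-class on $\overline X$ is only known to come from $X$ rationally. An integral class on $\overline X$ need not be $p^*$ of an integral class on $X$; it may be only $\tfrac1d$ of one, via transfer. I would first try to show that, with equal Betti numbers, $p^*\colon H^k(X;\R)\to H^k(\overline X;\R)$ is an isomorphism. The lifting problem then only involves the free part. Torsion and the degree-$d$ discrepancy can be absorbed because lifting over $\R/\Z$ works modulo integral classes, so one may lift $\tfrac1d\operatorname{tr}^*$-corrected representatives. The remaining task is to check that the bounded cointegral-basis chain extension of $X$ transports with constant independent of $d$. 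This is where the estimate $\|(p_*)^{-1}\|$ measured against cointegral bases, rather than real norms, must be handled carefully.
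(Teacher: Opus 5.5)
Your outline is the paper's: the integral filling estimate, followed by a uniform bound on $L_k^\R(\ccell*(\overline X))$ obtained by pulling lifts back along $p$ (monotonicity, Proposition~\ref{prop:liftmono}). But you stop exactly where the argument has to be closed. You call integrality the main obstacle and leave it as a ``remaining task'', with a plan (absorbing torsion, $\tfrac1d$-transfer corrections, estimating $\|(p_*)^{-1}\|$ against cointegral bases) that is never carried out and is not needed. The missing observation is that integrality is only required in the direction of $p$. There it is automatic, because $p_*$ is defined over $\Z$, so $H_k(p;\R)$ maps $H_k(\overline X)_\R$ into $H_k(X)_\R$.

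Concretely, let $\varphi=\pi(\varphi')$ with $\varphi'\in H^k(\overline X;\R)$. Transport the \emph{real} class: $\psi':=H^k(p;\R)^{-1}(\varphi')$, which exists because $H^k(p;\R)$ is dual to the isomorphism $H_k(p;\R)$. Pick a lift $\widetilde\psi$ of $\pi(\psi')$ with $\|\widetilde\psi\|_\infty\le L_k^\R(\ccell*(X))+\varepsilon$, and set $\widetilde\varphi:=p^*\widetilde\psi$, so that $\|\widetilde\varphi\|_\infty\le\|\widetilde\psi\|_\infty$. For $\alpha\in H_k(\overline X;\Z)$ you get $\langle\widetilde\varphi-\varphi',\alpha\rangle=\langle\widetilde\psi-\psi',H_k(p)(\alpha)\rangle\in\Z$. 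Hence $\pi(\widetilde\varphi)=\varphi$ by the universal coefficient theorem for $\R/\Z$. No integral class of $\overline X$ has to come from $X$, and $d$ never appears.

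In your chain-extension formulation the same point is the choice $\widetilde B:=H_k(p)^{-1}(B)$. It is cointegral because $H_k(\overline X)_\R\subset H_k(p)^{-1}(H_k(X)_\R)\subset H_k(p)^{-1}(\Z B)=\Z\widetilde B$. Moreover $\|H_k(p)^{-1}\|^{B,\widetilde B}=1$ tautologically (Lemma~\ref{lem:chainextheq}, Proposition~\ref{prop:gnarlmono}).

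Second, you misquote the filling estimate, and the version you use is false. Proposition~\ref{prop:intcohfill} gives $|\widetilde f-f|_\infty\le\frac12\|\partial_k\|+L_k^\R+\varepsilon$. The extra term comes from lifting the $\R/\Z$-valued $(k-1)$-cochain $h$ with $\delta h=\pi(g)-\pi(f)$ to a real cochain of norm at most $\frac12$. This is a cochain-level correction that the lifting volume, which only sees classes, does not control. For example, take a disk built from an $m$-gon ($m\ge 2$) and one $2$-cell. Then $L_1^\R=0$ and $\Xi_1^\R=1/m$, but $\Xi_1^\Z=1$, so your inequality $\Xi_k^\Z\le\Xi_k^\R+L_k^\R$ fails.

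Hence you also need a bound on $\|\partial_k^{\overline X}\|$ that is uniform in the covering. It does not follow from $\|\partial_k^X\|$: for $S^1$ with one $1$-cell, $\partial_1^X=0$, while every nontrivial connected covering has $\|\partial_1\|=2$. However, $\|\partial_k^{\overline X}\|\le\|\widetilde\partial_k\|$, the norm of the cellular boundary of the universal covering, which is finite and depends only on $X$. Then any $c>0$ with $c\ge\frac12\|\widetilde\partial_k\|+L_k^\R(\ccell*(X))$ for the finitely many $k\le\dim X$ works.
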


In particular, this applies in degree~$0$ and~$1$ to finite
CW-complexes with Kazhdan fundamental group (Example~\ref{exa:kazhdan}),
as used by Bader and Sauer in their proofs of uniform waist
inequalities~\cite{bader_sauer}.  For this application, in addition to
the basic notions, only Sections~\ref{sec:size},
\ref{subsec:defliftingvol}, \ref{subsec:liftfill},
and~\ref{subsec:fincov} are needed.

Finally, using the existence of bounded chain extensions with
respect to the induced semi-norm on homology (Section~\ref{subsec:snboundedchainext}),
we obtain the following result for products. It should be noted
that the calculation of the $\ell^1$-semi-norm on products is
notoriously difficult and only few exact calculations are known~\cite{bucher,heuer_loeh_spectrum}.

\begin{cor}[Corollary~\ref{cor:Kuennethtopo}]
  Let $X$,~$Y$ be topological spaces such that $X$ has finite
  dimensional reduced homology in each degree (e.g., a closed manifold
  or a CW-complex of finite type). Then, for each $n\in\N$, the Künneth isomorphism
  \begin{align*}
    \bigoplus_{p\in \N} H_p(X;\R) \otimes_\R H_{n-p}(Y;\R)
    & \to H_n(X\times Y;\R)
    \\
    \alpha \otimes \beta
    & \mapsto \alpha \times \beta
  \end{align*}
  is bilipschitz.
\end{cor}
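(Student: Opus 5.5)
We reduce the statement to an algebraic Künneth estimate for normed $\R$-chain complexes and then specialise to the singular chain complexes $C_*(X;\R)$ and $C_*(Y;\R)$ with the $\ell^1$-norm. The left-hand side carries a norm built from the $\ell^1$-semi-norms on $H_p(X;\R)$ and $H_{n-p}(Y;\R)$, for instance the projective tensor semi-norm on each summand, summed over~$p$. Only finitely many summands are nonzero in degree~$n$, and $H_p(X;\R)$ is finite dimensional. Hence any two reasonable choices of such a norm are equivalent up to constants, and we may fix the projective one. Bilipschitz then means two things: $\|\alpha\times\beta\|_1 \le C\,\|\alpha\otimes\beta\|_\otimes$, and conversely $\|\sum_p z_p\|_\otimes \le C'\,\|\sum_p \alpha_p\times\beta_p\|_1$.

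\emph{Upper bound.} This direction is standard. The cross product is induced by the Eilenberg--Zilber map $C_*(X)\otimes C_*(Y)\to C_*(X\times Y)$. This map sends a pair of simplices to a signed sum of $\binom{p+q}{p}$ simplices, so it has norm at most $\binom{n}{p}$ with respect to the projective $\ell^1$-norm on the tensor product. Hence $\|\alpha\times\beta\|_1\le\binom{n}{p}\|\alpha\|_1\|\beta\|_1$. Passing to infima over representatives and using the triangle inequality over the finitely many $p$ gives the bound.

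\emph{Lower bound.} This is the main obstacle, and here we use bounded chain extensions (Section~\ref{subsec:snboundedchainext}). Since $H_p(X;\R)$ is finite dimensional, the existence result there should give, for each $p\le n$, a chain extension $\varphi_p\colon C_p(X;\R)\to H_p(X;\R)$ that is bounded with respect to $\|\cdot\|_1$ on chains and the induced semi-norm on homology. Assembling these gives a chain map $\varphi\colon C_*(X)\to H_*(X)$, where the target has zero differential, and $\varphi$ induces the identity on homology. The chain-map property holds because $\varphi_{p-1}$ kills boundaries: a boundary is a cycle representing~$0$.

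The plan for the lower bound is then as follows.
\begin{enumerate}
\item Let $D=C_*(Y;\R)$. The map $\varphi\otimes\id_D\colon C_*(X)\otimes D\to H_*(X)\otimes D$ is bounded for the projective norms, with norm at most $\max_{p\le n}\|\varphi_p\|$.
\item The algebraic Künneth theorem over the field~$\R$ gives $H_n(H_*(X)\otimes D)\cong\bigoplus_p H_p(X)\otimes H_{n-p}(Y)$.
\item Because $H_p(X)$ is finite dimensional with the $\ell^1$-semi-norm, the induced semi-norm on $H_n(H_*(X)\otimes D)$ is equivalent to $\|\cdot\|_\otimes$. To see this, choose a basis of $H_p(X)$ adapted to its null space and reduce to finitely many copies of $D$, each carrying the quotient semi-norm.
\item Hence $\|z\|_\otimes\le C''\,\|z\|^{X\otimes D}$, where $\|\cdot\|^{X\otimes D}$ is the semi-norm on $H_n(C_*(X)\otimes D)$.
\item Pass to $X\times Y$ using the Alexander--Whitney map $C_*(X\times Y)\to C_*(X)\otimes C_*(Y)$. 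It is a homotopy inverse of Eilenberg--Zilber and has norm at most $n+1$ in degree~$n$. This gives $\|z\|^{X\otimes D}\le (n+1)\|\times(z)\|_1$.
\end{enumerate}

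The delicate points are two. First, the bounded chain extension must exist even though the $\ell^1$-semi-norm on homology may fail to be a norm. I expect the earlier section to handle this via finite dimensionality: choose a complement of the null space and extend using Hahn--Banach-type bounded projections. Second, step~3 needs care with semi-norms on the nose, again relying on finite dimensionality of $H_p(X)$ and on the fact that the zero differential on $H_*(X)$ makes the tensor complex split degreewise.
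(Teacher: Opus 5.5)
Your proposal is correct and is essentially the paper's argument. The paper also builds bounded chain extensions via Hahn--Banach to get a bounded chain homotopy equivalence $C_*(X;\R)\to (H_*(X;\R),0)$. It then replaces the homology semi-norm by an equivalent truncated $\ell^1$-semi-norm to write down a bounded inverse of the Künneth map for the trivial-differential complex, and finally passes to $X\times Y$ using that the Eilenberg--Zilber map and its inverse are bounded on chains, matching your steps 1--5.

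One nuance: in the paper, ``reduced homology'' means $\overline{H}_p(X;\R)=H_p(X;\R)/\Null\bigl(H_p(X;\R)\bigr)$, so only this quotient is assumed finite dimensional. Your argument works verbatim in that generality, because only the finitely many adapted basis vectors outside the null space contribute to the truncated semi-norm.
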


In particular, this also leads to an inheritance result
for the behaviour of product maps with respect to the
$\ell^1$-semi-norm (Corollary~\ref{cor:productofbilip}).
For these applications, in addition to basic notions,
only Section~\ref{sec:normedkuenneth} is needed.

%%%%%%%%%%%%%%%%%%%%%%%%%%%%%%%%%%%%%%%%%%%%%%%%%%%%%%%
% toc
\tableofcontents

%%%%%%%%%%%%%%%%%%%%%%%%%%%%%%%%%%%%%%%%%%%%%%%%%%%%%%%%%%%%%%%%%
\section{Chain extensions}\label{sec:chainext}

Chain extensions are maps from a chain complex to its homology that
extend the canonical projection from the space of cycles to the
homology. 

\begin{defi}[chain extension]
  Let $C_*$ be a chain complex over a ring~$R$ and
  $k \in \N$. A \emph{chain extension of~$C_*$ in degree~$k$}
  is an $R$-linear map~$s \colon C_k \to H_k(C_*)$
  with the following property:
  For all cycles~$c \in C_k$, we have 
  \[ s(c) = [c].
  \]
  In other words, $s$ makes the following diagram commutative,
  where the left arrow is the inclusion and the right 
  arrow is the canonical projection:
  \[
  \begin{tikzcd}
    C_k 
    \ar[rr,dashed,"s",description]
    && H_k(C_*)
    \\
    & \ker \partial_k
    \ar{ul}{}
    \ar{ur}{}
  \end{tikzcd}
  \]
\end{defi}

Clearly not every chain complex admits a chain extension.  Chain
complexes over fields always admit chain extensions.  If $C_*$ is a
chain complex and $(s_k)_k$ is a sequence of chain extensions in all
degrees~$k$, then $s_* \colon C_* \to H_*(C_*)$ is a homology
isomorphism, where we view~$H_*(C_*)$ as chain complex with trivial
boundary operator: under the canonical isomorphism~$H_*(H_*(C_*))
\cong H_*(C_*)$, the induced map~$H_*(s_*)$ corresponds
to~$\id_{H_*(C_*)}$. If the chain complex~$C_*$ is free in every
degree (and bounded below), then $s_*$ is a chain homotopy
equivalence~\cite[Chapter~1, Theorem~8.4]{brown_cohomofgroups}.
Thus, in this situation, chain extensions are
a measure gauging ``how far'' a chain complex is from having a trivial
boundary operator.

We will consider two different ways to quantify the size of chain
extensions in the context of marked free chain complexes: Using
$\ell^1$-norms on homology with respect to cointegral bases
(Sections~\ref{sec:size}--\ref{sec:gnarl}) and using the induced
semi-norm on homology (Section~\ref{sec:normedkuenneth}).

In the following, we will mainly be interested in chain complexes
obtained by base change from chain complexes over~$\Z$.

\begin{notation}[(co)homology with coefficients]
  If $C_*$ is a $\Z$-chain complex, $\F$ is a commutative ring,
  and $k \in \N$, we abbreviate
  \begin{align*}
    H_k(C_*;\F)
    & := H_k(C_* \otimes_\Z \F)
    \\
    H^k(C_*;\F)
    & := H_k\bigl(\Hom_\Z(C_*,\F)\bigr).
  \end{align*}
  If $f_*
  \colon C_* \to D_*$ is a $\Z$-chain map, then we write
  \begin{align*}
    H_k(f_*;\F)
    & := H_k(f_* \otimes_\Z \id_\F) \colon H_k(C_*;\F) \to H_k(D_*;\F)
    \\
    H^k(f_*;\F)
    & := H^k\bigl(\Hom_\Z(f_*,\id_\F)\bigr)
    \colon H^k(D_*;\F) \to H^k(C_*;\F).
  \end{align*}
  Furthermore, we write~$H_k(C_*)_\F$ for the integral part
  of~$H_k(C_*;\F)$, i.e., for the image of~$H_k(C_*;\Z)$ under the
  coefficients change~$H_k(C_*;\Z) \to H_k(C_*;\F)$.
  If $\F$ is a field, an $\F$-basis~$B$ of~$H_k(C_*;\F)$ is \emph{cointegral}
  if~$H_k(C_*)_\F \subset \Z B$.
\end{notation}

\begin{defi}[chain extension, with coefficients]
  Let $C_*$ be a $\Z$-chain complex,
  let $k \in \N$, and let $\F$ be a commutative ring.
  A \emph{chain extension 
    of~$C_*$ in degree~$k$ over~$\F$} is a chain
  extension~$s \colon C_k \otimes_\Z \F \to H_k(C_*;\F)$ of~$C_*\otimes_\Z \F$
  in degree~$k$ of the $\F$-chain complex~$C_* \otimes_\Z \F$. 
\end{defi}

\begin{lemma}[transport of chain extensions]\label{lem:chainextheq}
  Let $C_*$, $D_*$ be $\Z$-chain complexes, let $\F$ be a
  commutative ring, let $k \in \N$, let $s$ be a chain extension of~$D_*$
  in degree~$k$ over~$\F$, and
  let $f_* \colon C_* \otimes_\Z \F \to D_* \otimes_\Z \F$
  be an $\F$-chain map that induces an $\F$-isomorphism~$H_k(f_*)
  \colon H_*(C_*;\F) \to H_*(D_*;\F)$. Then 
  \[ H_k(f_*)^{-1} \circ s \circ f_k 
  \colon C_k \otimes_\Z \F \to H_k(C_*;\F)
  \]
  is a chain extension of~$C_*$ in degree~$k$ over~$\F$.
\end{lemma}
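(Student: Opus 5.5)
We need to check two things: that the composite $t := H_k(f_*)^{-1} \circ s \circ f_k$ is $\F$-linear, and that it restricts to the canonical projection on cycles. Linearity is immediate: $f_k$ and $s$ are $\F$-linear by assumption, and the inverse of the $\F$-isomorphism $H_k(f_*)$ is again $\F$-linear. (We read the hypothesis as saying that the degree-$k$ map $H_k(f_*) \colon H_k(C_*;\F) \to H_k(D_*;\F)$ is an isomorphism; only degree~$k$ is used.)

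For the extension property, we take a cycle $c \in C_k \otimes_\Z \F$, i.e., $\partial^{C\otimes\F}_k c = 0$. Since $f_*$ is a chain map, we have
\[ \partial^{D\otimes\F}_k f_k(c) = f_{k-1}\bigl(\partial^{C\otimes\F}_k c\bigr) = 0, \]
so $f_k(c)$ is a cycle of $D_* \otimes_\Z \F$. Because $s$ is a chain extension of $D_*$ in degree~$k$ over~$\F$, this gives $s(f_k(c)) = [f_k(c)]$. By the definition of the induced map on homology, $[f_k(c)] = H_k(f_*)([c])$. Applying $H_k(f_*)^{-1}$ then yields $t(c) = [c]$, as required.

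There is no real obstacle; the only points needing care are the chain map property, which guarantees that cycles go to cycles, and the identification $H_k(f_*)[c] = [f_k c]$, which is the definition of the induced map.
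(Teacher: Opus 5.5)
Your proof is correct and takes the same approach as the paper. The chain map property sends the cycle $c$ to the cycle $f_k(c)$, the chain extension property gives $s(f_k(c)) = [f_k(c)] = H_k(f_*)[c]$, and applying $H_k(f_*)^{-1}$ yields $[c]$; your explicit $\F$-linearity check and your reading of the hypothesis as an isomorphism in degree~$k$ are harmless additions the paper leaves implicit.
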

\begin{proof}
  This is a straightforward calculation:
  Let $c \in C_k \otimes_\Z \F$ be a cycle. Then
  $f_k(c) \in D_k \otimes_\Z \F$
  is a cycle and as $s$ is a chain extension
  of~$D_*$ in degree~$k$ over~$\F$ we obtain (in~$H_k(C_*;\F)$)
  \begin{align*}
    [c]
    & = H_k(f_*)^{-1} \circ H_k(f) [c]
    = H_k(f_*;\F)^{-1} \circ s \bigl(f_k(c)\bigr)
    ,
  \end{align*}
  as desired.
\end{proof}

The hypotheses on~$f_*$ in Lemma~\ref{lem:chainextheq} are, for
instance, satisfied if $f_* = \widetilde f_* \otimes_\Z \id_\F$,
where $\widetilde f_* \colon C_* \to D_*$ is a $\Z$-chain
homotopy equivalence: Indeed, then~$f_* = \widetilde f_*\otimes_\Z
\id_\F \colon C_* \otimes_\Z \F \to D_* \otimes_\Z \F$ is an
$\F$-chain homotopy equivalence and so $f_*$ indeed induces an
$\F$-isomorphism~$H_k(f_*;\F) \colon H_k(C_*;\F) \to H_k(D_*;\F)$.

%%%%%%%%%%%%%%%%%%%%%%%%%%%%%%%%%%%%%%%%%%%%%%%%%%%%%%%%%%%%%%%%%
\section{Basic notions in normed homological algebra}\label{sec:basicnorms}

We recall basic notions and properties of norms, semi-norms,
normed chain complexes, and the induced semi-norm on cohomology.

\begin{setup}
  In this section, $\F$ will be a subfield of the reals with
  the standard absolute value.
\end{setup}

\begin{defi}[norm]
  A \emph{norm on an $\F$-vector space~$X$} is a
  map~$X\to \R_{\geq 0}$ that fulfils the triangle inequality, 
  is $\F$-homogeneous, and positive definite.
\end{defi}

\begin{defi}[semi-norm]
  Let $X$ be an $\F$-vector space. A \emph{semi-norm on~$X$} is
  a map~$\|\cdot \| \colon X\to \R_{\geq 0}$ that fulfils the
  triangle inequality and is $\F$-homogeneous.
\end{defi}

More generally, we will also consider norms and semi-norms
with values in the extended range~$[0,\infty]$ (requiring
$0$ being mapped to~$0$).

\begin{defi}[bounded map]
  If $f \colon X \to Y$ is an $\F$-linear map, then
  we write
  \[\|f\| := \inf \bigl\{ c\in \R_{\geq 0}
  \bigm| \fa{x\in X} \bigl\|f(x)\bigr\| \leq c \cdot \|x\|
  \bigr\}
  \in [0,\infty].
  \] 
  We call $f$ \emph{bounded} if~$\|f\| < \infty$.
\end{defi}
  
\emph{Normed $\F$-chain complexes} are $\F$-chain complexes, where all
chain modules are equipped with norms (or, more generally, semi-norms)
and such that the boundary operators are bounded in each degree with
respect to the norms on the chain modules. The norm on a normed chain
complex induces a semi-norm on the corresponding homology, the
\emph{induced semi-norm}. 
We will mainly be interested in normed chain complexes arising in the
following way:

\begin{defi}[marked free chain complex]
  Let $R$ be a ring.  A \emph{marked free $R$-chain complex} is an
  $R$-chain complex~$C_*$ that is free in each degree and equipped
  with a fixed (``marked'') basis in every degree.
\end{defi}
  
If $C_*$ is a marked free $\Z$-chain complex, we view $C_* \otimes_\Z
\F$ as a marked free $\F$-chain complex through the induced $\F$-bases
and consider the corresponding $\ell^1$-norm~$|\cdot|_1$ on~$C_* \otimes_\Z \F$.

In particular, singular chain complexes, simplicial chain complexes, cellular
chain complexes, and bar complexes carry corresponding $\ell^1$-norms.
The boundary operators are bounded in the singular, simplicial, and bar cases
or if the underlying chain module is of finite rank.

\begin{notation}
  If $C_*$ is a marked free $\Z$-chain complex with marked basis~$\Sigma_k$ in
  degree~$k \in \N$, then the cochain complex~$\Hom_\Z (C_*,\F)$ 
  carries the dual norm
  \begin{align*}
    |\cdot|_\infty \colon \Hom_\Z(C_k,\F)
    & \rightarrow [0,\infty]
    \\
    f & \mapsto \sup_{b \in \Sigma_k} \bigl|f (b) \bigr|.
  \end{align*}
  The induced semi-norm on~$H^*(C_*;\F)$ is denoted by~$\|\cdot\|_\infty$.
  The coboundary operator of~$\Hom_\Z(C_*,\F)$ is bounded in degree~$k$
  if the boundary operator of~$C_* \otimes_\Z \F$ is bounded in degree~$k+1$.
\end{notation}

In particular, this construction applies to singular cochains,
simplicial cochains, cellular cochains, and bar cochains.

\begin{rem}\label{rem:passboundedtohomology}
  Let $C_*$, $D_*$ be marked free $\Z$-chain complexes and
  $f_* \colon C_* \to D_*$ a $\Z$-chain map that is bounded
  in degree~$k \in \N$. Then, 
  \[ \bigl\| H_k(f_*;\F) \bigr\| \leq \|f_k\|
  \qand
  \bigl\| H^k(f_*;\F) \bigr\| \leq \|f_k\|,
  \]
  where the operator norms are taken 
  with respect to the induced $\ell^1$-semi-norm and $\ell^\infty$-semi-norm
  on (co)homology, respectively.
\end{rem}

%%%%%%%%%%%%%%%%%%%%%%%%%%%%%%%%%%%%%%%%%%%%%%%%%%%%%%%%%%%%%%%%
\section{Size invariants on marked free chain complexes}\label{sec:size}

We consider a simple type of size invariants on marked free $\Z$-chain
complexes, satisfying basic normalisation and monotonicity
properties.  Here, ``size'' does not refer to the size in terms of
the size of homology, but to the homological complexity of the boundary
operator. This is reflected in the normalisation condition. The key
property for calculations or meaningful estimates is monotonicity,
which leads to the chain extension estimate and to finiteness results.

Examples of such size invariants are the cohomological lifting volume
(Section~\ref{sec:liftingvol}) and gnarledness of chain complexes
(Section~\ref{sec:gnarl}). The former is optimal for cohomological
lifting and related to integral cohomological filling problems; the
latter is, by design, universal among size invariants
(Section~\ref{subsec:gnarluniversal}). The sandwich estimate between
the cohomological lifting volume and gnarledness
(Proposition~\ref{prop:liftgnarl}) expresses that both are closely
related and the cohomological lifting volume is almost universal.

\begin{notation}
  We write $\mCh$ for the class of marked free $\Z$-chain complexes. 
  We equip chain complexes in~$\mCh$ (and their base-changed versions)
  in each degree with the $\ell^1$-norm~$|\cdot|_1$ given by the marked
  basis.
\end{notation}

\begin{defi}[size invariant]\label{def:size}
  Let $\F$ be a subfield of~$\R$ and let $k \in \N$.
  A \emph{size invariant over~$\F$ in degree~$k$} is a
  map~$I \colon \mCh \to [0,\infty]$ with the following properties:
  \begin{itemize}
  \item
    \emph{Normalisation.}
    If $C_*$ is in~$\mCh$ with~$\partial_k = 0$ and $\partial_{k+1} = 0$,
    then~$I(C_*) \leq 1$.
  \item
    \emph{Monotonicity.}
    If $C_*$, $D_*$ are in~$\mCh$ and $f_* \colon C_* \otimes_\Z \F
    \to D_* \otimes_\Z \F$ is an $\F$-chain map inducing an 
    $\F$-isomorphism~$H_k(f_*) \colon H_k(C_*;\F) \to H_k(D_*;\F)$
    satisfying the integrality condition~$H_k(f_*) (H_k(C_*)_\F) \subset
    H_k(D_*)_\F$, then 
    \[ I(C_*) \leq \|f_k\| \cdot I(D_*).
    \]
  \end{itemize}
\end{defi}

Monotonicity includes both an integrality condition and
quantitative information (namely, $\|f_k\|$). Taking chain
maps as in the monotonicity condition as morphisms on~$\mCh$,
one could view the monotonicity condition as a functoriality
to the poset category~$[0,\infty]$, twisted by quantitative
data.

\begin{prop}[chain extension estimate]\label{prop:sizechainext}
  Let $\F$ be a subfield of~$\R$, let $k \in \N$,
  and let $I$ be a size invariant over~$\F$ in degree~$n$. 
  Let $C_*$ be a marked free $\Z$-chain complex,
  let $s \colon C_k \otimes_\Z \F \to H_k(C_*;\F)$
  be a chain extension of~$C_*$ over~$\F$ in
  degree~$k$, and let $B$ be a cointegral
  $\F$-basis of~$H_k(C_*;\F)$.
  Then, we obtain
  \[ I(C_*)
  \leq \|s\|^B,
  \]
  where $\|\cdot\|^B$ refers to the operator norm taken with respect
  to the $\ell^1$-norms associated with the $\F$-basis 
  of~$C_k \otimes_\Z \F$ induced by the marked $\Z$-basis of~$C_k$
  and $B$ of~$H_k(C_*;\F)$, respectively.
\end{prop}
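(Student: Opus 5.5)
We apply monotonicity to a chain map from $C_*\otimes_\Z\F$ into a comparison complex $D_*$ with trivial boundary operator in degrees $k$ and $k+1$. Normalisation then gives $I(D_*)\leq 1$, so monotonicity yields $I(C_*) \leq \|f_k\|\cdot 1$. The plan is to build $D_*$ and $f_*$ from $s$ and $B$ so that $\|f_k\| = \|s\|^B$. (I read the degree $n$ in the statement as $k$.)

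\textbf{Construction of~$D_*$.} Take $D_*$ to be the marked free $\Z$-chain complex with
\[ D_k := \Z B, \qquad D_j := 0 \quad \text{for } j \neq k, \]
marked basis~$B$ in degree~$k$, and zero boundary. Then $\partial_k = 0$ and $\partial_{k+1} = 0$, and $H_k(D_*;\F) = D_k \otimes_\Z \F \cong \F B = H_k(C_*;\F)$. The $\ell^1$-norm on $D_k\otimes_\Z\F$ is exactly the $\ell^1$-norm with respect to~$B$. The integral part is $H_k(D_*)_\F = \Z B$.

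\textbf{Construction of~$f_*$.} Set $f_k := s$, viewed as a map
\[ C_k\otimes_\Z\F \to H_k(C_*;\F) = \F B = D_k\otimes_\Z\F, \]
and $f_j := 0$ for $j\neq k$. We check the required properties in order.
\begin{itemize}
\item \emph{Chain map.} We need $\partial^D \circ f_k = f_{k-1}\circ\partial_k$ and $\partial^D\circ f_{k+1} = f_k\circ \partial_{k+1}$. The first holds since both sides are zero. The second says $s\circ\partial_{k+1}=0$, which holds because a boundary $\partial_{k+1} c$ is a cycle, so $s(\partial_{k+1}c) = [\partial_{k+1}c] = 0$.
\item \emph{Homology isomorphism.} For a cycle~$c$, $H_k(f_*)[c] = s(c) = [c]$. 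So under the identification $H_k(D_*;\F)=H_k(C_*;\F)$, the map $H_k(f_*)$ is the identity, hence an isomorphism.
\item \emph{Integrality.} We have $H_k(f_*)(H_k(C_*)_\F) = H_k(C_*)_\F \subset \Z B = H_k(D_*)_\F$, precisely by cointegrality of~$B$.
\item \emph{Norm.} By the choice of norms on~$D_k\otimes_\Z\F$, we get $\|f_k\| = \|s\|^B$.
\end{itemize}

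\textbf{Conclusion and main obstacle.} Monotonicity and normalisation now give $I(C_*)\leq \|s\|^B\cdot I(D_*)\leq \|s\|^B$. If $\|s\|^B=\infty$, the claim is trivial. The only delicate point is keeping the bookkeeping of identifications straight: $D_*$ must be a genuine marked free $\Z$-complex with the right integral part, which is why $D_k=\Z B$ rather than some other lattice. I would also use the convention $0\cdot\infty$ harmlessly; since $I(D_*)\leq 1$ is finite, this causes no issue.
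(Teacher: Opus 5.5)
Your proposal is correct and matches the paper's proof: the paper also takes $D_*$ concentrated in degree~$k$ with $D_k = \bigoplus_B \Z$ marked by~$B$. It likewise views $s$, extended by zero, as a chain map $f_*$ with $\|f_k\| = \|s\|^B$, uses cointegrality of~$B$ for the integrality condition, and combines monotonicity with normalisation. Your explicit checks that $s \circ \partial_{k+1} = 0$ and that $H_k(f_*)$ is the identity under the identification $D_k \otimes_\Z \F \cong H_k(C_*;\F)$ supply details the paper leaves implicit.
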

\begin{proof}
  We apply monotonicity to~$s$ and normalisation: 
  We consider the $\Z$-chain complex~$D_*$ concentrated
  in degree~$k$ given by
  \[ D_k := \bigoplus_B \Z,
  \]
  with the marked $\Z$-basis~$B$.
  The chain extension~$s$ can be viewed as a chain
  map~$f_* \colon C_* \otimes_\Z \F \to D_*\otimes_\Z \F$
  (through the $\F$-isomorphism~$D_k \otimes_\Z \F \to H_k(C_*;\F)$
  induced by the basis~$B$ and extension by the zero
  maps in degrees~$\neq k$). By construction,
  \begin{itemize}
  \item
    $H_k(f_*) \colon H_k(C_*;\F) \to H_k(D_*;\F)$ is an $\F$-isomorphism,
  \item
    cointegrality of~$B$ ensures that $H_k(f_*) (H_k(C_*)_\F ) \subset H_k(D)_\F$,
  \item
    and we have $\|f_k\| = \|s\|^B$.
  \end{itemize}
  Therefore, monotonicity 
  applies to~$f_*$ and the target complex~$D_*$
  falls into the normalisation case. We obtain
  \[
    I(C_*)
    \leq \|f_k\| \cdot I(D_*)
    \leq \|s\|^B \cdot 1.
    \qedhere
  \]
\end{proof}

\begin{example}[vanishing]\label{exa:vanishing}
  Let $\F$ be a subfield of~$\R$, let $k \in \N$,
  let $I$ be a size invariant over~$\F$ in degree~$k$,
  and let $C_*$ be a marked free $\Z$-chain complex
  with~$H_k(C_*;\F) \cong 0$ (e.g., through~$C_k \cong 0$).
  Then $I(C_*) = 0$ by the chain extension estimate
  (Proposition~\ref{prop:sizechainext}), applied to
  the zero chain extension.
\end{example}

\begin{cor}[finiteness]\label{prop:sizefinite}
  Let $\F$ be a subfield of~$\R$, let $k \in \N$,
  let $I$ be a size invariant over~$\F$ in degree~$k$, 
  and let $C_*$ be a marked free $\Z$-chain complex.
  If $\rk_\Z C_k < \infty$, then
  \[ I(C_*) < \infty.
  \]
\end{cor}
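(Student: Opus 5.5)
The plan is to apply the chain extension estimate (Proposition~\ref{prop:sizechainext}). It then suffices to exhibit one chain extension~$s$ of~$C_*$ over~$\F$ in degree~$k$ together with one cointegral $\F$-basis~$B$ of~$H_k(C_*;\F)$ such that $\|s\|^B < \infty$.

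\textbf{Step 1 (a cointegral basis).} Since $\rk_\Z C_k < \infty$, the group $Z_k := \ker \partial_k \subset C_k$ is a finitely generated free abelian group. Hence $H_k(C_*;\Z)$, a quotient of~$Z_k$, is finitely generated, and so is its image $H_k(C_*)_\F \subset H_k(C_*;\F)$. This image is a finitely generated torsion-free subgroup of an $\F$-vector space, hence a free abelian group, say with $\Z$-basis $b_1,\dots,b_m$.

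The $b_i$ are $\F$-linearly independent. Indeed, a nontrivial $\F$-relation would have coefficients in a finite-dimensional $\Q$-span, which yields a nontrivial $\Q$-relation and, after clearing denominators, a $\Z$-relation. The $b_i$ also span $H_k(C_*;\F)$, because $H_k(C_*;\F) = H_k(C_*)_\F \cdot \F$. This uses that $\F$ is flat over~$\Z$, so $H_k(C_*;\F) \cong H_k(C_*;\Z)\otimes_\Z \F$. Thus $B := \{b_1,\dots,b_m\}$ is a cointegral $\F$-basis.

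\textbf{Step 2 (a chain extension).} The space $C_k \otimes_\Z \F$ is finite-dimensional over~$\F$. Choose an $\F$-linear complement $W$ of the cycle space $\ker(\partial_k \otimes \id_\F)$. Define $s$ to be the canonical projection $c \mapsto [c]$ on cycles and zero on~$W$. This is $\F$-linear and a chain extension by construction.

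\textbf{Step 3 (finiteness of the norm).} The map $s$ is linear between finite-dimensional spaces, each equipped with an $\ell^1$-norm with respect to a finite basis. So $\|s\|^B \le \max_{\sigma} |s(\sigma)|_1^B < \infty$, where $\sigma$ ranges over the finitely many marked basis elements of~$C_k$. Proposition~\ref{prop:sizechainext} then gives $I(C_*) \le \|s\|^B < \infty$.

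The only step needing care is Step~1, namely that a cointegral basis exists. This rests on finite generation of $H_k(C_*)_\F$, on its freeness as a torsion-free group, and on flatness of~$\F$ to ensure that it spans. Steps~2 and~3 are routine linear algebra.
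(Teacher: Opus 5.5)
Your proposal is correct and follows essentially the same route as the paper. The paper also takes some chain extension over the field~$\F$, a cointegral basis coming from a $\Z$-basis of the finitely generated free group~$H_k(C_*)_\F$, uses $\|s\|^B = \max_\sigma |s(\sigma)|_1^B < \infty$ for the finite marked basis of~$C_k$, and then applies the chain extension estimate; you simply spell out details the paper leaves implicit. One small point: your $\F$-linear independence argument for the~$b_i$ also rests on the identification $H_k(C_*;\F)\cong H_k(C_*;\Z)\otimes_\Z\F$ that you use for spanning, since $\Z$-independent elements of an arbitrary $\F$-vector space need not be $\F$-independent.
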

\begin{proof}
 We use the chain extension estimate (Proposition~\ref{prop:sizechainext}): 
 Because $\F$ is a field, there exists a chain
 extension~$s \colon C_k\otimes_\Z \F \to H_k(C_*;\F)$
 of~$C_*$ over~$\F$ in degree~$k$.
 Let $B$ be a cointegral basis of~$H_k(C_*;\F)$
 (for instance, induced from a $\Z$-basis of
 the finitely generated free $\Z$-module~$H_k(C_*)_\F$).

 We write~$\Sigma_k \subset C_k$ for the marked $\Z$-basis of~$C_k$.
 Because $C_k \otimes_\Z \F$ is equipped with the $\ell^1$-norm
 given by the finite marked basis~$\Sigma_k$, we obtain
 \[ \|s\|^B
 = \max_{\sigma \in \Sigma_k} \bigl| s(\sigma) \bigr|_1^B
 < \infty.
 \]
 The chain extension estimate (Proposition~\ref{prop:sizechainext})
 shows that $I(C_*) \leq \|s\|^B$
 is finite.
\end{proof}

%%%%%%%%%%%%%%%%%%%%%%%%%%%%%%%%%%%%%%%%%%%%%%%%%%%%%%%%%%%%%%%%%
\section{Cohomological lifting volume}\label{sec:liftingvol}

We introduce the cohomological lifting volume as optimal bound
measuring minimal lifts of cohomology classes from~$H^*(C_*;\R/\Z)$
to~$H^*(C_*;\R)$. We then establish the fundamental properties
of the cohomological lifting volume:
\begin{itemize}
\item it is a size invariant in the sense of Section~\ref{sec:size} and thus
  satisfies normalisation, monotonicity, and the chain extension
  estimate;
\item it gives control on integral cohomological fillings.
\end{itemize}
The applications to finite coverings and simplicial subdivisions
are contained in Section~\ref{subsec:fincov} and
Section~\ref{sec:liftsc}, respectively.

%%%%%%%%%%%%%%%
\subsection{Definition and basic properties}\label{subsec:defliftingvol}

\begin{defi}[cohomological lifting volume]
  Let $C_*$ be a marked free chain complex over~$\Z$,
  let $\F$ be a subfield of~$\R$, and let $k \in \N$.
  We write~$\pi \colon H^k(C_*;\F) \to H^k(C_*;\F/\Z)$
  for the map induced by the canonical
  projection~$\F \to \F/\Z$.
  \begin{itemize}
  \item
    For~$\varphi \in H^k(C_*;\F/\Z)$, we define the
    \emph{lifting volume} of~$\varphi$ by
    \[ L_k^\F(C_*,\varphi)
    := \inf_{\widetilde \varphi \in \pi^{-1}(\varphi) \subset H^k(C_*;\F)}
    \| \widetilde \varphi \|_\infty
    \in [0,\infty].
    \]
  \item
    The \emph{cohomological lifting volume of~$C_*$
      over~$\F$ in degree~$k$} is defined as
    \[ L_k^\F (C_*)
    := \sup_{\varphi \in \im \pi \subset H^k(C_*;\F/\Z)} L_k^\F(C_*,\varphi)
    \in [0,\infty].
    \]
  \end{itemize}
\end{defi}

If $\F$ is a subfield of~$\R$, then $\F$ contains~$\Q$, the field~$\F$
is dense in~$\R$, and $[-1/2,1/2)$ is a fundamental domain in~$\F$
  for~$\F/\Z$.

\begin{rem}\label{rem:UCT}
  Let $C_*$ be a marked free chain complex over~$\Z$,
  let $\F$ be a subfield of~$\R$, and let $k \in \N$.
  By the universal coefficient theorem, 
  because $\F$ and $\F/\Z$ are divisible $\Z$-modules,
  the horizontal evaluation maps in
  the commutative diagram
  \[
  \begin{tikzcd}
    H^k(C_*;\F)
    \ar[d,"\pi", description]
    \ar[r]
    & \Hom_\Z\bigl(H_k(C_*), \F\bigr)
    \ar[d]
    \\
    H^k(C_*;\F/\Z)
    \ar[r]
    & \Hom_\Z\bigl(H_k(C_*), \F/\Z\bigr)
  \end{tikzcd}
  \]
  are isomorphisms over~$\F$ and $\Z$, respectively.
  The upper evaluation map factors
  over the restriction isomorphisms~$\Hom_\F(H_k(C_*;\F), \F)
  \to \Hom_\Z(H_k(C_*)_\F,\F) \to \Hom_\Z(H_k(C_*),\F)$.
\end{rem}

\begin{example}[normalisation]\label{exa:liftnormalisation}
  Let $\F$ be a subfield of~$\R$, let $k \in \N$,
  and let $C_*$ be a marked free $\Z$-chain complex
  with~$\partial_{k+1} =0$.
  Then, we have
  \[
  L_k^\F(C_*)
  = \begin{cases}
    0 & \text{if $C_k \cong_\Z 0$}
    \\
    \frac12 & \text{otherwise}.
  \end{cases}
  \]
  Indeed, this follows from the fact that all cochains in degree~$k$
  are cocycles and that $[-1/2,1/2)$ is a fundamental domain in~$\F$
  for~$\F/\Z$.
\end{example}

\begin{rem}
  The cohomological lifting volume is \emph{not} a chain homotopy
  invariant: For~$d \in \N_{>0}$, we consider the $\Z$-chain
  complex~$C_*$ 
  \[
  \begin{tikzcd}
    \Z
    \ar{r}{(d,1)}
    & 
    \Z \oplus \Z
    \ar{r}{0}
    & \Z,
  \end{tikzcd}
  \]
  concentrated in degrees~$2$, $1$,~$0$ with the marked bases given
  by~$1 \in \Z$ and $(e_1,e_2) := ((1,0), (0,1))$ of~$\Z\oplus
  \Z$. This is related to a cellular chain complex of the mapping
  cylinder of a $d$-fold covering~$S^1 \to S^1$.

  If $f \in \Hom_\Z(C_1,\F)$ is a cocycle, the cocycle
  condition~$0 = \delta^1f (1) = f(\partial_2(1)) = d \cdot f(e_1) + f(e_2)$ forces
  \[ \bigl|f(e_2)\bigl| = \bigl|- d \cdot f(e_1)\bigr|
  = d \cdot \bigl| f(e_1)\bigr|.
  \]
  From this and an explicit description of~$H^1(C_*;\F/\Z)$ and
  $H^1(C_*;\F)$, we can calculate~$L^\F_1(C_*) = d/2$. But the
  chain homotopy equivalence type of~$C_*$ is independent of~$d$.
\end{rem}

\begin{prop}[monotonicity]\label{prop:liftmono}
  Let $\F$ be a subfield of~$\R$, let $k\in \N$, and let $C_*$ and
  $D_*$ be marked free $\Z$-chain complexes.  Let $f_* \colon C_*
  \otimes_\Z \F \to D_* \otimes_\Z \F$ be an $\F$-chain map that
  induces an $\F$-isomorphism~$H_k(f_*) \colon H_k(C_*;\F) \to
  H_k(D_*;\F)$ and that satisfies the integrality condition~$H_k(f_*)
  (H_k(C_*)_\F) \subset H_k(D_*)_\F$. Then, we have
  \[ L_k^\F(C_*) \leq \|f_k\| \cdot L_k^\F(D_*).
  \]
\end{prop}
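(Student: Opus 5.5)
The plan is to transport lifting problems from~$C_*$ to~$D_*$ using the universal coefficient description of Remark~\ref{rem:UCT}. Then I solve the lifting problem optimally in~$D_*$ and pull the solution back along~$f_*$. I may assume $\|f_k\| < \infty$ and $L_k^\F(D_*) < \infty$, since otherwise there is nothing to show (using the usual convention for products involving~$\infty$). Let $\varphi \in \im \pi \subset H^k(C_*;\F/\Z)$ and choose $\psi \in H^k(C_*;\F)$ with $\pi(\psi) = \varphi$. By Remark~\ref{rem:UCT}, I identify $H^k(C_*;\F)$ with $\Hom_\F(H_k(C_*;\F),\F)$ via evaluation. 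Under this identification, two classes $\psi_1,\psi_2$ satisfy $\pi(\psi_1)=\pi(\psi_2)$ if and only if $\psi_1-\psi_2$ takes values in~$\Z$ on~$H_k(C_*)_\F$. The same holds for~$D_*$.

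First, I define $\psi' := \psi \circ H_k(f_*)^{-1} \in \Hom_\F(H_k(D_*;\F),\F) \cong H^k(D_*;\F)$, which uses that $H_k(f_*)$ is an $\F$-isomorphism. Then $\pi(\psi') \in \im \pi$. For~$\epsilon > 0$, the definition of~$L_k^\F(D_*)$ provides a class~$\psi'' \in H^k(D_*;\F)$ with $\pi(\psi'') = \pi(\psi')$ and a cocycle~$z \in \Hom_\Z(D_k,\F)$ representing~$\psi''$ with $|z|_\infty \leq L_k^\F(D_*) + \epsilon$. In particular, $\psi'' - \psi'$ is $\Z$-valued on~$H_k(D_*)_\F$.

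Second, I pull back. The cochain $z \circ f_k$ is a cocycle in $\Hom_\F(C_k \otimes_\Z \F,\F) = \Hom_\Z(C_k,\F)$, because $f_*$ is an $\F$-chain map. Its class, read as a functional, is $\psi'' \circ H_k(f_*)$. Hence
\[ \psi'' \circ H_k(f_*) - \psi = (\psi'' - \psi') \circ H_k(f_*). \]
The integrality condition $H_k(f_*)(H_k(C_*)_\F) \subset H_k(D_*)_\F$ shows that this difference is $\Z$-valued on~$H_k(C_*)_\F$. So $[z\circ f_k]$ is a lift of~$\varphi$. This is the only place where the integrality hypothesis enters; it is needed because $f_*$ is merely an $\F$-map, so one cannot simply push $\F/\Z$-classes along it.

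Third, I do the norm estimate. For each marked basis element~$b$ of~$C_k$, writing $f_k(b)$ in the marked basis of~$D_k\otimes_\Z\F$ gives
\[ \bigl|z(f_k(b))\bigr| \leq |z|_\infty \cdot |f_k(b)|_1 \leq |z|_\infty \cdot \|f_k\|. \]
Therefore $L_k^\F(C_*,\varphi) \leq \|[z\circ f_k]\|_\infty \leq \|f_k\|\cdot (L_k^\F(D_*)+\epsilon)$. Letting $\epsilon \to 0$ and taking the supremum over~$\varphi$ yields the claim.

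The main obstacle is purely bookkeeping: I must check that the identification of $\pi$-fibres with "difference integral on $H_k(\cdot)_\F$" is correct. The evaluation map $H^k(C_*;\F/\Z)\to\Hom_\Z(H_k(C_*),\F/\Z)$ is injective by Remark~\ref{rem:UCT}, and the factorisation through~$H_k(C_*)_\F$ holds. I also need that $H^k$ of~$f_*$ corresponds to precomposition with~$H_k(f_*)$ under evaluation, which is immediate on representatives.
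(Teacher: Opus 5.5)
Your proposal is correct and is essentially the paper's proof: transport the class to $D_*$ via the inverse of $H^k(f_*)$ (your $\psi\circ H_k(f_*)^{-1}$), lift there, pull back along $f_k$ using $\|H^k(f_*)\|\leq\|f_k\|$, and check via the universal coefficient description and the integrality condition that the pulled-back class lifts $\varphi$. The only cosmetic difference is that you fix an $\varepsilon$-optimal lift in $D_*$ up front, whereas the paper bounds the pull-back of an arbitrary lift and then takes the infimum over all lifts.
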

\begin{proof}
  In view of the natural isomorphism~$\Hom_\Z(\args,\F)
  \cong_\F \Hom_\F ( \args\otimes_\Z \F,\F)$,
  the chain map~$f_*$ induces a cochain map~$\Hom_\Z(D_*,\F)
  \to \Hom_\Z(C_*,\F)$. This cochain map induces, by
  the homological assumption on~$f_*$ and
  the universal coefficient theorem, an 
  $\F$-iso\-morphism~$H^k(f_*) \colon H^k(D_*;\F) \to H^k(C_*;\F)$
  with~$\|H^k(f_*)\| \leq \|f_k\|$. 

  Let $\varphi \in H^k(C_*;\F/\Z)$ be in the image of
  the reduction homomorphism~$\pi$, say~$\varphi = \pi(\varphi')$
  with~$\varphi' \in H^k(C_*;\F)$.
  We show that $L_k^\F(C_*,\varphi)\leq \|f_k\| \cdot L_k^\F(D_*)$.
  Taking the supremum over all~$\varphi$ then completes the proof.
  
  We consider the transported classes 
  \[ \psi' := H^k(f_*)^{-1}(\varphi')
  \in H^k(D_*;\F)
  \qand
  \psi := \pi(\psi') \in H^k(D_*;\F/\Z).
  \]
  and show that $L_k^\F(C_*,\varphi) \leq \|f_k\| \cdot L_k^\F(D_*,\psi)$: 
  Let $\widetilde \psi \in H^k(D_*;\F)$ with~$\pi(\widetilde\psi) = \psi$.
  We set
  \[ \widetilde \varphi := H^k(f_*)(\widetilde \psi) \in H^k(C_*;\F).
  \]
  Then, we obtain:
  \begin{itemize}
  \item By construction,
    $ \|\widetilde \varphi\|_\infty
    \leq \bigl\| H^k(f_*) \bigr\| \cdot \|\widetilde \psi\|_\infty
    \leq \|f_k\| \cdot \|\widetilde\psi\|_\infty.
    $
  \item Moreover, $\pi(\widetilde \varphi) = \varphi$, which can
    be seen as follows: In view of the universal coefficient theorem
    and $\pi(\varphi') = \varphi$,
    we only need to show that
    \[ \fa{\alpha \in H_k(C_*)}
    \langle \widetilde \varphi,\alpha\rangle
    - \langle \varphi',\alpha\rangle
    \in \Z
    \]
    (Remark~\ref{rem:UCT}). 
    Let $\alpha \in H_k(C_*)$. Then, we have
    \begin{align*}
      \langle \widetilde \varphi,\alpha \rangle
      - \langle \varphi',\alpha\rangle
      & 
      = \langle \widetilde \varphi - \varphi' , \alpha \rangle
      = \bigl\langle
        H^k(f_*)(\widetilde \psi - \psi'),\alpha
        \bigr\rangle
      \\
      & 
      = \bigl\langle
        \widetilde \psi - \psi', H_k(f_*)(\alpha)
        \bigr\rangle
        .
    \end{align*}
    this difference lies in~$\Z$, because 
    $\pi(\widetilde \psi) = \psi = \pi(\psi')$
    and $H_k(f_*)(\alpha) \in H_k(D_*)_{\F}$ by
    the integrality condition~$H_k(f_*) (H_k(C_*)_\F)
    \subset H_k(D_*)_\F$.
  \end{itemize}
  We conclude that $L_k^\F(C_*,\varphi) \leq \|f_k\| \cdot \|\widetilde \psi\|_\infty$. 
  Taking the infimum over all~$\widetilde \psi$, we obtain~$L_k^\F(C_*,\varphi)
  \leq \|f_k\| \cdot L_k^\F(D_*,\psi) \leq \|f_k\| \cdot L_k^\F(D_*)$.
\end{proof}

\begin{prop}\label{prop:liftsize}
  Let $\F$ be a subfield of~$\R$ and let $k \in \N$. Then, $L_k^\F$
  (more precisely, even $2 \cdot L_k^\F$) is a size invariant
  over~$\F$ in degree~$k$ in the sense of Section~\ref{sec:size}.
\end{prop}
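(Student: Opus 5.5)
To prove the proposition, I check the two axioms of Definition~\ref{def:size} for $2 \cdot L_k^\F$; the claim for $L_k^\F$ itself then follows, since $L_k^\F \leq 2\cdot L_k^\F$ and both axioms are stated as upper bounds. (More precisely, monotonicity is homogeneous, so it holds for $L_k^\F$ iff for $2L_k^\F$, and normalisation for $2L_k^\F$ implies it for $L_k^\F$.)

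\emph{Monotonicity} is exactly Proposition~\ref{prop:liftmono}. It has the same hypotheses as the monotonicity axiom: an $\F$-chain map inducing an $\F$-isomorphism on~$H_k$ and satisfying the integrality condition. Multiplying its conclusion by~$2$ gives $2L_k^\F(C_*) \leq \|f_k\| \cdot 2L_k^\F(D_*)$. Here one should check the convention $0\cdot\infty$ when $\|f_k\|=0$ or $L_k^\F(D_*)=\infty$. This is inherited from Proposition~\ref{prop:liftmono}, whose proof gives the bound $\|f_k\|\cdot\|\widetilde\psi\|_\infty$ with finite~$\|\widetilde\psi\|_\infty$, so no additional care is needed.

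\emph{Normalisation.} Let $C_*$ be in~$\mCh$ with $\partial_k = 0$ and $\partial_{k+1}=0$. In particular $\partial_{k+1}=0$, so Example~\ref{exa:liftnormalisation} applies and gives $L_k^\F(C_*) \in \{0, 1/2\}$. Hence $2L_k^\F(C_*) \leq 1$, and a fortiori $L_k^\F(C_*)\leq 1$.

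If one wants to justify Example~\ref{exa:liftnormalisation} rather than cite it, the argument is short. Since $\partial_{k+1}=0$, every $k$-cochain is a cocycle and there are no nonzero coboundaries in degree~$k$. Thus $H^k(C_*;\F) = \Hom_\Z(C_k,\F)$, and similarly over~$\F/\Z$, where the induced semi-norm is just $|\cdot|_\infty$. Given $\varphi = \pi(\varphi')$, I choose on each marked basis element the representative of $\varphi'(b) \bmod \Z$ in $[-1/2,1/2)$. This gives a lift of norm at most~$1/2$, so the supremum is at most~$1/2$.

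The only point needing any care is this normalisation computation, namely the identification of cohomology with all cochains under $\partial_{k+1}=0$. Since it is already recorded in Example~\ref{exa:liftnormalisation}, I expect no real obstacle.
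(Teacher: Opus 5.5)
Your proposal is correct and matches the paper's proof: normalisation comes from Example~\ref{exa:liftnormalisation} (giving $2\cdot L_k^\F \leq 1$), and monotonicity from Proposition~\ref{prop:liftmono}. One small slip: the absence of nonzero coboundaries in degree~$k$ comes from $\partial_k = 0$, not from $\partial_{k+1} = 0$. Both hold under the normalisation hypothesis, and the upper bound $\leq 1/2$ needs only $\partial_{k+1}=0$, so nothing breaks.
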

\begin{proof}
  The cohomological lifting volume~$L_k^\F$ and $2 \cdot L_k^\F$ satisfy 
  \begin{itemize}
  \item normalisation by Example~\ref{exa:liftnormalisation} and 
  \item monotonicity by Proposition~\ref{prop:liftmono}.
  \qedhere
  \end{itemize}
\end{proof}

In particular, we obtain the corresponding version of
the chain extension estimate and finiteness. For convenience,
we spell them out explicitly:

\begin{cor}[chain extension estimate]\label{cor:liftchainext}
  Let $\F$ be a subfield of~$\R$, let $k \in \N$,
  let $C_*$ be a marked free $\Z$-chain complex,
  let $s \colon C_k \otimes_\Z \F \to H_k(C_*;\F)$
  be a chain extension of~$C_*$ over~$\F$ in
  degree~$k$, and let $B$ be a cointegral
  $\F$-basis of~$H_k(C_*;\F)$.
  Then, we obtain
  \[ L_k^\F(C_*)
  \leq \frac12 \cdot \|s\|^B,
  \]
  where $\|\cdot\|^B$ refers to the operator norm taken with respect
  to the $\ell^1$-norms associated with the $\F$-basis 
  of~$C_k \otimes_\Z \F$ induced by the marked $\Z$-basis of~$C_k$
  and $B$ of~$H_k(C_*;\F)$, respectively.
\end{cor}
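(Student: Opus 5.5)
The plan is to apply the general chain extension estimate (Proposition~\ref{prop:sizechainext}) to the size invariant $2 \cdot L_k^\F$. Proposition~\ref{prop:liftsize} says that $2\cdot L_k^\F$ is a size invariant over~$\F$ in degree~$k$. Applying the chain extension estimate to it, with the given chain extension~$s$ and the given cointegral basis~$B$, yields
\[ 2 \cdot L_k^\F(C_*) \leq \|s\|^B, \]
and dividing by~$2$ gives the claim. (If $\|s\|^B = \infty$ there is nothing to show.)

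The one point to verify is that $2\cdot L_k^\F$ really is a size invariant; this is the only step needing care. Monotonicity follows at once from Proposition~\ref{prop:liftmono} after multiplying both sides by~$2$.

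For normalisation, I would invoke Example~\ref{exa:liftnormalisation}. The normalisation condition concerns complexes with $\partial_k = 0$ and $\partial_{k+1} = 0$, so in particular $\partial_{k+1}=0$ and the example applies. It gives $L_k^\F(C_*) \in \{0, \tfrac12\}$, so $2\cdot L_k^\F(C_*) \leq 1$.

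Concretely, in the proof of Proposition~\ref{prop:sizechainext} the target complex~$D_*$ is concentrated in degree~$k$ with marked basis~$B$. Hence $L_k^\F(D_*) \leq \tfrac12$, and monotonicity gives $L_k^\F(C_*) \leq \|f_k\| \cdot \tfrac12 = \tfrac12 \|s\|^B$. This direct argument can be written out instead of citing the abstract statement, to make the constant $\tfrac12$ visible.
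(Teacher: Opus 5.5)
Your proposal is correct and follows essentially the same route as the paper: the paper likewise observes that $2 \cdot L_k^\F$ is a size invariant over~$\F$ in degree~$k$ (Proposition~\ref{prop:liftsize}) and then applies the general chain extension estimate (Proposition~\ref{prop:sizechainext}). Your extra check of normalisation via Example~\ref{exa:liftnormalisation} and of monotonicity via Proposition~\ref{prop:liftmono}, together with the unwound direct argument, just makes explicit what the paper leaves to the cited propositions.
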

\begin{proof}
  Because $2 \cdot L_k^\F$ is a size invariant over~$\F$
  in degree~$k$, this is an instance of the general
  chain extension estimate (Proposition~\ref{prop:sizechainext}).   
\end{proof}

\begin{cor}[finiteness]\label{cor:liftfin}
  Let $\F$ be a subfield of~$\R$, let $k \in \N$,
  and let $C_*$ be a marked free $\Z$-chain complex.
  If $\rk_\Z C_k < \infty$, then
  \[ L_k^\F(C_*) < \infty.
  \]
\end{cor}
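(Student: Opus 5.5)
This is an instance of the general finiteness result for size invariants, Corollary~\ref{prop:sizefinite}. By Proposition~\ref{prop:liftsize}, the cohomological lifting volume~$L_k^\F$ is a size invariant over~$\F$ in degree~$k$. Corollary~\ref{prop:sizefinite} then shows $L_k^\F(C_*) < \infty$ whenever $\rk_\Z C_k < \infty$.

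To keep the dependence on earlier results explicit, I would spell out the argument through the chain extension estimate for the lifting volume, Corollary~\ref{cor:liftchainext}. First, since $\F$ is a field, the $\F$-subspace $\ker(\partial_k \otimes_\Z \id_\F)$ of $C_k \otimes_\Z \F$ has a complement. The canonical projection from the cycles to $H_k(C_*;\F)$ therefore extends linearly to a chain extension $s \colon C_k \otimes_\Z \F \to H_k(C_*;\F)$, for instance by setting $s$ to zero on the complement.

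Second, I would choose a cointegral basis~$B$. The group $H_k(C_*)_\F$ is a quotient of $H_k(C_*;\Z)$, which in turn is a subquotient of the finitely generated free module~$C_k$, so it is finitely generated. It is torsion-free because it lies in an $\F$-vector space, hence free. A $\Z$-basis of it spans $H_k(C_*;\F)$ over~$\F$. This holds because $H_k(C_*;\F) \cong H_k(C_*;\Z) \otimes_\Z \F$ by flatness of~$\F$, so the image of $H_k(C_*;\Z)$ spans. The same $\Z$-basis is $\F$-linearly independent, since elements of $H_k(C_*) \otimes \Q$ that are $\Z$-independent stay $\F$-independent by flatness. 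So it forms a cointegral $\F$-basis~$B$ of $H_k(C_*;\F)$.

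Third, since the marked basis $\Sigma_k$ of~$C_k$ is finite, the operator norm is a finite maximum:
\[
  \|s\|^B = \max_{\sigma \in \Sigma_k} \bigl|s(\sigma)\bigr|_1^B < \infty.
\]
Corollary~\ref{cor:liftchainext} then yields $L_k^\F(C_*) \leq \frac12 \cdot \|s\|^B < \infty$.

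The only step needing some care is the second one, namely the existence of a cointegral basis. It comes down to the facts that $H_k(C_*)_\F$ is a finitely generated lattice spanning $H_k(C_*;\F)$, and that a $\Z$-basis of it is $\F$-independent; both follow from flatness of~$\F$ over~$\Z$. Everything else is a direct application of earlier results.
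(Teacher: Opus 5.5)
Your proposal is correct and matches the paper: the paper deduces the statement from Proposition~\ref{prop:liftsize} together with the general finiteness result (Corollary~\ref{prop:sizefinite}), and the proof of that corollary is exactly your unpacked argument (a chain extension exists over the field~$\F$, a cointegral basis comes from a $\Z$-basis of~$H_k(C_*)_\F$, $\|s\|^B$ is a finite maximum over the finite marked basis, and the chain extension estimate concludes). Your additional justification that such a $\Z$-basis is indeed an $\F$-basis of~$H_k(C_*;\F)$ fills in a detail the paper leaves implicit.
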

\begin{proof}
  Because $L_k^\F$ is a size invariant over~$\F$
  in degree~$k$, this is an instance of the general
  finiteness property (Proposition~\ref{prop:sizefinite}).   
\end{proof}

%%%%%%%%%%%%%%%
\subsection{Integral cohomological fillings}\label{subsec:liftfill}

The cohomological lifting volume gives upper bounds for
integral cohomological filling problems. 

\begin{prop}[integral cohomological fillings]\label{prop:intcohfill}
  Let $\F$ be a subfield of~$\R$, let $k \in \N$,
  and let $C_*$ be a marked free $\Z$-chain complex.
  Let $f \in \Hom_\Z(C_k,\F)$ be a cochain such
  that $\delta f$ is a coboundary in the cochain complex~$\Hom_\Z(C_*,\Z)$.
  Then, for every~$\varepsilon \in \R_{>0}$, there exists
  a cochain~$\widetilde f \in \Hom_\Z(C_k,\Z)$ with
  \[ \delta \widetilde f = \delta f
  \qand
  |\widetilde f - f|_\infty
  \leq \frac12 \cdot \|\partial_k\| + L_k^\F(C_*) + \varepsilon.
  \]
  Here, $|\cdot|_\infty$ and $\|\cdot\|$ refer to the norms induced by the
  marked $\Z$-bases of~$C_k$ and~$C_{k-1}$, respectively.
\end{prop}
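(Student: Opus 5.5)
We first pick an integral cochain with the right coboundary and then correct it by an integral cocycle. By hypothesis there is $g \in \Hom_\Z(C_{k},\Z)$ with $\delta g = \delta f$; we do not control $|g|_\infty$. The difference $z := f - g \in \Hom_\Z(C_k,\F)$ is then an $\F$-cocycle. Its reduction $\overline z \in \Hom_\Z(C_k,\F/\Z)$ equals the reduction of~$f$, because $g$ is integral. We look for an integral cocycle $u$ with $g + u$ close to~$f$. Then $\widetilde f := g + u$ satisfies $\delta\widetilde f = \delta f$, and $|\widetilde f - f|_\infty = |u - z|_\infty$. So the task is to approximate the $\F$-cocycle $z$ by an integral cocycle.

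To do this, consider $\varphi := \pi([z]) \in H^k(C_*;\F/\Z)$, which lies in $\im \pi$. By definition of $L_k^\F(C_*)$ there is a lift $\widetilde\varphi \in H^k(C_*;\F)$ with $\pi(\widetilde\varphi) = \varphi$ and $\|\widetilde\varphi\|_\infty \le L_k^\F(C_*) + \varepsilon/2$. Choose a cocycle representative $y$ of $\widetilde\varphi$ with $|y|_\infty \le L_k^\F(C_*) + \varepsilon$.

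The relation $\pi([z]) = \pi([y])$ says that $z - y$ reduces to an $\F/\Z$-coboundary. Hence there is $h \in \Hom_\Z(C_{k-1},\F)$ such that $w := z - y - \delta h$ takes values in $\Z$. Since $z$ and $y$ are cocycles, $w$ is an integral cocycle.

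It remains to round~$h$. Pick $h' \in \Hom_\Z(C_{k-1},\Z)$ with $|h - h'|_\infty \le 1/2$ by rounding on the marked basis, using the fundamental domain $[-1/2,1/2)$. Then $\delta h'$ is integral and
\[ |\delta(h - h')|_\infty \le \|\partial_k\| \cdot |h - h'|_\infty \le \tfrac12 \|\partial_k\|, \]
since the coboundary operator norm on $\ell^\infty$ is bounded by the operator norm of $\partial_k$ on $\ell^1$ (duality).

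Set $u := w + \delta h'$, which is an integral cocycle. Then
\[ z - u = y + \delta(h - h'), \]
so
\[ |\widetilde f - f|_\infty = |z - u|_\infty \le L_k^\F(C_*) + \varepsilon + \tfrac12\|\partial_k\|. \]

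The main point to verify carefully is the step from $\pi([z]) = \pi([y])$ in cohomology with $\F/\Z$ coefficients to the cochain-level statement that $z - y - \delta h$ is integral for some $\F$-valued $h$. This follows by lifting an $\F/\Z$-valued cochain $\overline h$ with $\delta\overline h = \overline{z-y}$ to an $\F$-valued cochain, which is possible because $C_{k-1}$ is free. The other ingredient is the dual-norm estimate $|\delta\eta|_\infty \le \|\partial_k\|\,|\eta|_\infty$, which is the same estimate underlying Remark~\ref{rem:passboundedtohomology}. If $\|\partial_k\| = \infty$ the statement is trivially true.
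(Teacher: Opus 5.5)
Your proposal is correct and is essentially the paper's proof: your $g$, $y$, and $h'-h$ play the roles of the paper's $f'$, $g$, and $\widetilde h$, and your final cochain $g+u = f - y - \delta(h-h')$ is exactly the paper's $\widetilde f = f - g + \delta\widetilde h$. The only cosmetic difference is that you first lift the $\F/\Z$-valued cochain to an arbitrary $\F$-valued $h$ and then subtract an integral rounding $h'$, whereas the paper lifts directly into the fundamental domain $[-1/2,1/2)$.
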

\begin{proof}
  This is a straightforward translation of the corresponding
  statement in the simplicial context~\cite[Lemma~3.9]{CDMW}
  to our setting: 
  Let $\pi$ denote the maps on the level of cochains and cohomology
  induced by the canonical projection~$\F \to \F/\Z$.

  As a first step, 
  we consider the cochain~$\pi(f) \in \Hom_\Z(C_k,\F/\Z)$.
  Because $\delta f$ is $\Z$-valued (as coboundary in~$\Hom_\Z(C_*,\Z)$),
  the cochain~$\pi(f)$ in fact is a cocycle. We define
  \[ \varphi := \bigl[\pi(f)\bigr] \in H^k(C_*;\F/\Z). 
  \]
  We show that $\varphi$ lies in the image of~$\pi \colon H^k(C_*;\F) \to H^k(C_*;\F/\Z)$: 
  By hypothesis, there is a cochain~$f' \in \Hom_\Z(C_k,\Z)$ with
  \[ \delta f = \delta f'.
  \]
  Then $f-f' \in \Hom_\Z(C_k,\F)$ is a cocycle and $f - f'$  
  lifts~$\pi(f)$. Therefore,
  $\pi([f-f']) = [\pi(f)] = \varphi$, which shows that $\varphi$ is
  in the image of~$\pi$. 

  Let $\varepsilon \in \R_{>0}$.  By definition of the lifting volume,
  there exists a class~$\widetilde \varphi \in H^k(C_*;\F)$
  with~$\pi(\widetilde \varphi) = \varphi$ that admits a representing
  cocycle~$g \in \Hom_\Z(C_k,\F)$ with~$|g|_\infty \leq L_k^\F(C_*) +
  \varepsilon$. 
  The cochain~$f - g$ fills~$\delta f$,
  but might not be $\Z$-valued; this can be adjusted as follows:
  Because $\pi(g)$ and $\pi(f)$ represent the same class~$\varphi$, there
  exists a cochain~$h \in \Hom_\Z(C_{k-1},\F/\Z)$ with~$\delta h =
  \pi(g) - \pi(f)$.  We pick a lift~$\widetilde h
  \in\Hom_\Z(C_{k-1},\F)$ of~$h$ with~$|\widetilde h|_\infty \leq 1/2$
  and finally set
  \[ \widetilde f := f - g + \delta \widetilde h
  \in \Hom_\Z(C_k,\F).
  \]
  Then $\widetilde f$ has the desired properties:
  \begin{enumerate}
  \item The cochain~$\widetilde f$ is $\Z$-valued, because
    \[ \pi(\widetilde f)
    = \pi(f - g + \delta \widetilde h)
    = \pi(f) - \pi(g) + \pi(\delta \widetilde h)
    = \pi(f) - \pi(g) + \delta h
    = 0.
    \]
  \item We have~$\delta \widetilde f = \delta f$, because
    \[ \delta \widetilde f
    = \delta (f - g + \delta \widetilde h)
    = \delta f - \delta g + \delta \circ \delta (\widetilde h)
    = \delta f - 0 + 0.
    \]
  \item
    We have
    \[ | \widetilde f - f |_\infty
    = |-g + \delta^{k-1} \widetilde h|_\infty
    \leq |g|_\infty + \|\delta^{k-1}\| \cdot | \widetilde h|_\infty
    \leq L_k^\F(C_*) + \varepsilon + \|\partial_k\| \cdot \frac12.
    \qedhere
    \]
  \end{enumerate}
\end{proof}

%%%%%%%%%%%%%%%
\subsection{Integral cohomological fillings on finite coverings}\label{subsec:fincov}

We reformulate the integral cohomological filling estimate in terms of
the expansion terminology of Bader and
Sauer~\cite[Section~2]{bader_sauer} and give examples in the setting
of finite coverings.

\begin{defi}[expansion constant]
  Let $R$ be a normed ring (e.g., a subring of~$\R$),
  let $M$, $N$ be normed $R$-modules, 
  and let $f \colon M \to N$ be $R$-linear.
  \begin{itemize}
  \item For~$y \in \im f$, we define the \emph{expansion constant of~$f$
    at~$y$} by
    \[ \Xi^R(f,y)
    := \inf \bigl\{ c \in \R_{\geq 0}
    \bigm| \exi{x \in f^{-1}(y) \subset M} |x| \leq c \cdot |y|
    \bigr\}.
    \]
  \item
    The \emph{expansion constant of~$f$} is 
    \[ \Xi^R(f)
    := \sup_{y \in \im f} \Xi_R(f,y)
    \in [0,\infty].
    \]
  \item
    If $C_*$ is a marked free $\Z$-chain complex and $k \in \N$,
    we equip the dual chain complex with the respective
    $|\cdot|_\infty$-norms and abbreviate  
    \[ \Xi_k^R(C_*)
    := \Xi^R \bigl( \Hom_\Z(\partial_{k+1}, R)
    \colon \Hom_\Z(C_k,R) \to \Hom_\Z (C_{k+1},R) \bigr)
    \in [0,\infty].
    \]
  \end{itemize}
\end{defi}

\begin{cor}[integral cohomological fillings via expansion constants]\label{cor:exp}
  Let $\F$ be a subfield of~$\R$, let $k \in \N$, and let $C_*$ be a
  marked free $\Z$-chain complex. Then:
  \[ \Xi_k^\Z(C_*)
  \leq \Xi_k^\F(C_*) + \frac12 \cdot \|\partial_k\| + L_k^\F(C_*).
  \]
\end{cor}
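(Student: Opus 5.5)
The plan is to reduce the statement to Proposition~\ref{prop:intcohfill}, applied with $f$ a near-optimal real filling. Two cases are trivial and get handled first. If $\Xi_k^\F(C_*)$, $\|\partial_k\|$ or $L_k^\F(C_*)$ is infinite, there is nothing to show. The filling target $y = 0$ is always filled by $x = 0$.

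So let $y \in \Hom_\Z(C_{k+1},\Z)$ be a nonzero element of the image of $\delta^k = \Hom_\Z(\partial_{k+1},\Z)$, say $y = \delta f'$ with $f' \in \Hom_\Z(C_k,\Z)$. Viewing $y$ as $\F$-valued, it lies in the image of $\delta^k$ over~$\F$. Fix $\varepsilon > 0$. By definition of $\Xi_k^\F(C_*)$ there is $f \in \Hom_\Z(C_k,\F)$ with
\[ \delta f = y \qand |f|_\infty \leq \bigl(\Xi_k^\F(C_*) + \varepsilon\bigr) \cdot |y|_\infty. \]
Since $\delta f = y = \delta f'$ is a coboundary in $\Hom_\Z(C_*,\Z)$, Proposition~\ref{prop:intcohfill} applies. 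It gives $\widetilde f \in \Hom_\Z(C_k,\Z)$ with $\delta \widetilde f = y$ and
\[ |\widetilde f|_\infty \leq |f|_\infty + \frac12 \cdot \|\partial_k\| + L_k^\F(C_*) + \varepsilon. \]

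The key step, and the only real obstacle, is converting this additive error into a multiplicative one. Here we use integrality: $y$ is a nonzero $\Z$-valued cochain, so $|y|_\infty \geq 1$. Hence
\[ \frac12 \|\partial_k\| + L_k^\F(C_*) + \varepsilon \leq \Bigl(\frac12 \|\partial_k\| + L_k^\F(C_*) + \varepsilon\Bigr) \cdot |y|_\infty , \]
and therefore
\[ |\widetilde f|_\infty \leq \Bigl(\Xi_k^\F(C_*) + \frac12 \|\partial_k\| + L_k^\F(C_*) + 2\varepsilon\Bigr) \cdot |y|_\infty. \]
This shows $\Xi^\Z(\delta^k, y) \leq \Xi_k^\F(C_*) + \frac12\|\partial_k\| + L_k^\F(C_*) + 2\varepsilon$.

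Letting $\varepsilon \to 0$ and then taking the supremum over all $y \in \im \delta^k$ yields the claimed inequality. One small caveat concerns $|y|_\infty = \infty$, which is possible if $C_{k+1}$ has infinite rank. In that case the defining condition $|x| \leq c \cdot |y|$ is vacuous, so such $y$ contribute nothing problematic to the supremum.
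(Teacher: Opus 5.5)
Your proposal is correct and follows the paper's proof essentially verbatim. Like the paper, you take a near-optimal real filling of a nonzero integral coboundary, upgrade it to an integral filling via Proposition~\ref{prop:intcohfill}, and absorb the additive error using $|y|_\infty \geq 1$; your remarks on $y = 0$ and $|y|_\infty = \infty$ are harmless refinements the paper leaves implicit.
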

\begin{proof}
  We only need to unfold the definition of the expansion constant and apply
  the integral cohomological filling estimate
  (Proposition~\ref{prop:intcohfill}): 

  Without loss of generality, we may assume that $\Xi_k^\F(C_*)$
  is finite. 
  Let $c \in \R$ with~$c > \Xi_k^\F(C_*)$. Let $g \in \Hom_\Z(C_k,\Z)$
  be in the image of the coboundary operator~$\delta_\Z$ on~$\Hom_\Z(C_*,\Z)$,
  without out loss of generality let~$g \neq 0$.
  Then, there exists a cochain~$f \in \Hom_\Z(C_*,\F)$ with
  \[ \delta_\F f = g
  \qand
  |f|_\infty \leq c \cdot |g|_\infty.
  \]
  Given~$\varepsilon \in \R_{>0}$, by the integral cohomological filling
  estimate (Proposition~\ref{prop:intcohfill}), there is an integral
  cochain~$\widetilde f \in \Hom_\Z(C_*,\Z)$ with
  $\delta_\Z \widetilde f = \delta_\F f = g$ 
  and
  \begin{align*}
    |\widetilde f |_\infty
    & \leq
    |f|_\infty + \frac12 \cdot \|\partial_k\| + L_k^\F(C_*) + \varepsilon
    \\
    & \leq c \cdot |g|_\infty
    + \frac12 \cdot \|\partial_k\| + L_k^\F(C_*) + \varepsilon
    \\
    & \leq \Bigl( c + \frac12 \cdot \|\partial_k\| + L_k^\F(C_*) + \varepsilon \Bigr)
    \cdot |g|_\infty;
  \end{align*}
  for the last step, we used integrality of~$g$: If $g \neq 0$,
  then $|g|_\infty \geq 1$. 
  Taking $\varepsilon \to 0$ and the infimum over all~$c > \Xi_k^\F(C_*)$
  proves the claim.
\end{proof}

We denote the cellular chain complex functor by~$\ccell*$ (marked by
the bases of open cells) and implicitly use that cellular homology and
singular homology are naturally isomorphic.

\begin{cor}\label{cor:fincov}
  Let $X$ be a finite CW-complex, let $p \colon \overline X \to X$ be a
  finite sheeted covering map, and let $\overline X$ be equipped with
  the CW-structure induced by~$p$. Let $k \in \N$, let $\F$
  be a subfield of~$\R$  
  and assume that $\dim_\F H_k(\overline X;\F) = \dim_\F H_k(X;\F)$.
  \begin{enumerate}
  \item
    Then:
    \[ L_k^\F\bigl( \ccell*(\overline X) \bigr)
    \leq L_k^\F\bigl( \ccell*(X) \bigr)
    \]
  \item
    In particular, denoting the cellular boundary operator
    of the universal covering~$\widetilde X$
    of~$X$ by~$\widetilde \partial_*$,
    we obtain:
    \[ \Xi_k^\Z\bigl(\ccell*(\overline X) \bigr)
    \leq \Xi_k^\F\bigl(\ccell*(\overline X)\bigr)
    + \frac12 \cdot \|\widetilde \partial_k\| + L_k^\F\bigl(\ccell* (X)\bigr)
    \]
  \end{enumerate}
\end{cor}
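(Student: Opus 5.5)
For part~(1) we apply the monotonicity of the cohomological lifting volume (Proposition~\ref{prop:liftmono}) to the transfer chain map. Let $C_* := \ccell*(X)$ and $\overline C_* := \ccell*(\overline X)$, and let $d$ be the number of sheets. The transfer $t_* \colon C_* \to \overline C_*$ sends each open cell $e$ of~$X$ to the sum of its $d$ lifts. It is a $\Z$-chain map, and so is the projection $p_* \colon \overline C_* \to C_*$, with $p_* \circ t_* = d \cdot \id$. We set
\[ f_* := \frac1d \cdot \bigl(t_* \otimes_\Z \id_\F\bigr) \colon C_* \otimes_\Z \F \to \overline C_* \otimes_\Z \F. \]
Then $p_* \circ f_* = \id$, so $H_k(f_*)$ is injective. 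The hypothesis $\dim_\F H_k(\overline X;\F) = \dim_\F H_k(X;\F)$, with both dimensions finite, then shows that $H_k(f_*)$ is an $\F$-isomorphism with inverse $H_k(p_*;\F)$.

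For the norm, $t_k$ maps each marked basis element to a sum of $d$ distinct basis elements. Hence $|t_k(e)|_1 = d$, and therefore $\|f_k\| = 1$.

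The main obstacle is the integrality condition $H_k(f_*)(H_k(C_*)_\F) \subset H_k(\overline C_*)_\F$, because of the factor~$1/d$. The plan is to reverse roles instead. Apply monotonicity to $g_* := p_* \otimes \id_\F \colon \overline C_* \otimes_\Z \F \to C_* \otimes_\Z \F$. By the above, $g_*$ induces an isomorphism in degree~$k$. It is integral, being induced by a $\Z$-chain map, so integral classes go to integral classes. Its norm is $\|p_k\| = 1$, since each cell maps to a single cell. Proposition~\ref{prop:liftmono} then gives
\[ L_k^\F(\overline C_*) \leq \|p_k\| \cdot L_k^\F(C_*) = L_k^\F(C_*), \]
which is exactly~(1). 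So we use $p_*$ directly, and the transfer serves only to prove that $H_k(p_*;\F)$ is an isomorphism: it is surjective since $p_* \circ f_* = \id$, and an isomorphism by the dimension count.

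For part~(2) we apply Corollary~\ref{cor:exp} to~$\overline C_*$ and insert~(1). It remains to bound $\|\overline\partial_k\| \leq \|\widetilde\partial_k\|$. The operator norm in $\ell^1$ is the maximum over basis cells $\overline e$ of $|\overline\partial_k \overline e|_1$. Choose a lift of $\overline e$ to~$\widetilde X$, which is a cell $\widetilde e$ of the universal covering. Then $\overline\partial_k \overline e$ is the image of $\widetilde\partial_k \widetilde e$ under the chain map induced by the covering $\widetilde X \to \overline X$. That chain map sends cells to cells, so it does not increase $\ell^1$-norms. Hence $|\overline\partial_k\overline e|_1 \leq |\widetilde\partial_k \widetilde e|_1 \leq \|\widetilde\partial_k\|$, and combining this with~(1) in Corollary~\ref{cor:exp} yields the claimed estimate.
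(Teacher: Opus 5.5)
Your proof is correct and matches the paper's argument. In both, transfer shows that $H_k(p;\F)$ is surjective, the dimension hypothesis upgrades it to an isomorphism, and monotonicity (Proposition~\ref{prop:liftmono}) applied to the integral, norm-$\le 1$ chain map induced by $p$ gives~(1); part~(2) then follows from Corollary~\ref{cor:exp} together with $\|\overline\partial_k\| \leq \|\widetilde\partial_k\|$, which you justify explicitly by lifting cells to $\widetilde X$ where the paper just calls it a straightforward estimate.
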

\begin{proof}
  % # sheets non-zero
  \emph{Ad~1.}
  This is an instance of monotonicity of the lifting volume: 
  Transfer shows that the induced map~$H_k(p ;\F) \colon H_k(\overline X;\F)
  \longrightarrow H_k(X;\F)$ is surjective. As both $\F$-vector spaces
  are finite-dimensional (both CW-complexes are finite) and of the
  same dimension (by assumption), $H_k(p;\F)$ is an $\F$-isomorphism.
  The integrality
  condition is satisfied because $H_k(p ;\F)$ is induced by a continuous map. 
  Therefore, monotonicity (Proposition~\ref{prop:liftmono})
  implies that $L_k^\F(\ccell*(\overline X))
  \leq \| \ccell k(p)\| \cdot L_k^\F(\ccell*(X)) \leq 1 \cdot L_k^\F(\ccell*(X))$. 

  \emph{Ad~2.}
  We combine the estimate of the first part with
  Corollary~\ref{cor:exp} and the straightforward
  estimate~$\|\partial_k^{\cell,\overline X}\| \leq \|\widetilde \partial_k\|$.
\end{proof}

\begin{rem}[uniformity]\label{rem:expansionindep}
  The point of the second part of Corollary~\ref{cor:fincov}
  is that $\frac12 \cdot \|\widetilde \partial_k\| + L_k^\F(\ccell* (X))$ 
  does not depend on the finite covering~$\overline X$, but only on
  the base complex~$X$ (and its universal covering). Also, because $X$
  is a finite CW-complex, both $\|\widetilde \partial_k\|$ and
  $L_k^\F(\ccell*(X))$ are finite.
\end{rem}

\begin{example}\label{exa:kazhdan}
  Let $X$ be a finite connected CW-complex whose fundamental
  group~$\Gamma$ is Kazhdan. Then, there exists a constant~$c \in \R_{>0}$
  such that: For all finite coverings~$\overline X \to X$
  (with the induced CW-structure), we have 
  \[ \Xi_0^\Z \bigl(\ccell * (\overline X)\bigr)
  \leq c
  \qand
  \Xi_1^\Z \bigl(\ccell * (\overline X)\bigr)
  \leq c.
  \]
  Indeed, we only need to consider the case that $\overline X$
  is connected. 
  As a first step, we apply Corollary~\ref{cor:fincov}
  in this situation. The homology condition is satisfied
  in degree~$0$, because $X$ and $\overline X$ are connected
  and thus~$H_0(\overline X;\R) \cong_\R \R \cong_\R H_0(X;\R)$.
  The homology condition is satisfied in degree~$1$, because
  $\Gamma$ is Kazhdan, and so
  $H_1(X;\R) \cong_\R H_1(\Gamma;\R) \cong_\R 0$; the same reasoning
  applies to~$H_1(\overline X;\R)$, because the fundamental group
  of~$\overline X$ is Kazhdan as finite index subgroup of the
  Kazhdan group~$\Gamma$. 

  We then conclude via the observation on uniformity in
  Remark~\ref{rem:expansionindep}, and the result of Bader and Sauer
  showing that $\Xi_0^\R(\ccell * (\overline X))$ and $\Xi_1^\R(\ccell
  * (\overline X))$ are uniformly bounded in this
  situation~\cite[Theorem~2.8]{bader_sauer}.
 
  This estimate is slightly weaker than the version of Bader and
  Sauer~\cite[Theorem~3.13]{bader_sauer}, which gives equality between
  the integral and the real expansion constants. However, for their
  proofs of waist inequalities, the above version would be sufficient.
\end{example}

Corollary~\ref{cor:fincov} also admits a corresponding version
for the singular chain complex (and more general spaces). However,
in general, lifting volumes of singular chain complexes will not
be finite. 

\begin{example}
  Let $\overline M \to M$ be a finite sheeted covering of
  oriented closed connected manifolds. We write 
  $n := \dim M = \dim \overline M$ and equip $M$ with
  a finite CW-structure. Let $\F$ be a subfield of~$\R$. 
  Then $\dim_\F H_n(\overline M;\F) = 1 = \dim_\F H_n(M;\F)$
  and so Corollary~\ref{cor:fincov} applies in this situation
  (both to the cellular chain complex and the singular chain complex).
  We will return to this in Example~\ref{exa:fincovsv}.
\end{example}

For general finite sheeted covering maps, the cohomological
lifting volume does not satisfy monotonicity (as can, e.g.,
be seen in degree~$2$ for the double covering~$S^2 \to \R P^2$).
However, it is unknown, whether the cohomological lifting
volume is uniformly bounded over a given base:

\begin{question}
  Let $X$ be a finite CW-complex and~$k \in \N$. Does there
  exist a constant~$c\in \R_{\geq 0}$ with the following property?
  For every finite sheeted covering~$\overline X \to X$ (where $\overline X$
  is equipped with the induced CW-structure), we have~$L_k^\F\bigl( \ccell*(\overline X) \bigr)
  \leq c$.
\end{question}

%%%%%%%%%%%%%%%%%%%%%%%%%%%%%%%%%%%%%%%%%%%%%%%%%%%%%%%%%%%%%%%%%
\section{Cohomological lifting %and integral cohomological filling
  for subdivisions}\label{sec:liftsc}

Monotonicity of the cohomological lifting volume gives uniform control for
cohomological liftings and fillings on subdivisions of simplicial
complexes (Theorem~\ref{thm:liftsubdiv}, Corollary~\ref{cor:fillsubdiv}). 
As a preparation, we first recall basics on simplicial subdivisions.

We fix notation on simplicial complexes, in the combinatorial
sense: A \emph{simplicial complex} is a set~$X$ of finite sets, closed
under taking subsets. The elements of~$X$ are called
\emph{simplices}. If $\sigma \in X$, we write $\dim \sigma := \#
\sigma -1$.  For~$n\in \N$, we write~$X_n := \{ \sigma \in X \mid \dim
\sigma = n\}$ for the set of all $n$-simplices.  A \emph{simplicial
  map}~$X \to Y$ between simplicial complexes~$X$,~$Y$ is a map~$f
\colon X_0 \to Y_0$ with
\[ \fa{\sigma \in X} f(\sigma) \in Y.
\]
The geometric realisation of simplicial complexes is denoted
by~$|\cdot|$ and we extend this notation to simplices.

We denote the simplicial chain complex and simplicial homology
functors on simplicial complexes by $C_*^\Delta$ and $H_*^\Delta$,
respectively. We view the simplicial chain complex as marked through
the marked bases given by the simplices in the respective
dimension; more precisely, to avoid sign ambiguities, we
pick an ordering on the set of vertices and consider ordered
simplices.

%%%%%%%%%%%%%
\subsection{Simplicial subdivisions}\label{subsec:subdiv}

We recall the definition of subdivisions of simplicial complexes
and the key fact that each subdivision admits a simplicial
map to the original simplicial complex that induces an isomorphism
in homology.

\begin{defi}[subdivision of a simplicial complex]
  Let $X$ be a simplicial complex. A \emph{subdivision of~$X$}
  is a pair~$(X',i)$, consisting of a simplicial complex~$X'$
  and a homeomorphism~$i \colon |X'| \to |X|$ with the following
  properties:
  \begin{itemize}
  \item We have $X_0 \subset X'_0$ and $i|_{X_0} = \id_{X_0}$.
  \item If $\sigma \in X$, then for the subcomplex 
    $Y:=
    \{ \tau
    \mid \tau \in X',\
    i(|\tau|) \subset |\sigma|
    \}
    $ 
    of~$X'$, the restriction map~$i|_{|Y|} \colon |Y| \to |\sigma|$
    is a homeomorphism and moreover 
    $Y = \{ \tau \mid \tau \in X',\ i(\tau) \subset |\sigma|\}$
    (convexity).
  \end{itemize}
 
  Usually, we suppress~$i$ in the notation; we then speak of
  a subdivision~$X'$ of~$X$ and write~$|\tau| \subset |\sigma|$
  if $\tau$ and $\sigma$ are simplices of~$X'$ and~$X$, respectively,
  satisfying $i(|\tau|) \subset |\sigma|$.  
\end{defi}

As a consequence of this definition, for each simplex in the given
subdivision~$X'$, its geometric realisation is contained in the
geometric realisation of some simplex of the original simplicial
complex~$X$. Moreover, the simplices of~$X$ induce a
corresponding structure of (finite) subcomplexes of~$X'$.

An example of a systematic simplicial subdivision is the barycentric
subdivision.

\begin{lemma}\label{lem:subdivquasiiso}
  Let $X$ be a finite simplicial complex and let $X'$ be a subdivision
  of~$X$. Then, there exists a simplicial map~$f \colon X' \to X$
  such that: For every field~$\F$ and every~$k \in \N$, the induced
  map~$H_k^\Delta(f;\F) \colon H_k^\Delta(X';\F) \to H_k^\Delta(X;\F)$
  is an $\F$-isomorphism.
\end{lemma}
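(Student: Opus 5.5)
We construct $f$ as a simplicial approximation of the homeomorphism $i \colon |X'| \to |X|$, chosen by a vertex-carrier rule. For each vertex~$v \in X'_0$, let $\sigma_v \in X$ be the unique simplex of~$X$ whose open realisation contains~$i(v)$; this is the carrier of~$v$. We let $f(v)$ be any vertex of~$\sigma_v$. If $v \in X_0$, then $\sigma_v = \{v\}$, so $f(v) = v$.

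We first check that $f$ is simplicial. Let $\tau \in X'$. By the subdivision axioms, $i(|\tau|) \subset |\sigma|$ for some $\sigma \in X$, and we take the smallest such~$\sigma$. Each vertex~$v$ of~$\tau$ satisfies $i(v) \in |\sigma|$, so its carrier~$\sigma_v$ is a face of~$\sigma$. Hence $f(v) \in \sigma$, so $f(\tau) \subset \sigma$, and $f(\tau) \in X$ by closure under subsets.

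Next we check that $|f| \simeq i$, from which the isomorphism follows. For every $x \in |X'|$, both $|f|(x)$ and $i(x)$ lie in the common simplex~$|\sigma|$ above, where $\tau$ is the simplex of~$X'$ containing~$x$ in its interior. The straight-line homotopy $t\,|f|(x) + (1-t)\,i(x)$ is therefore well defined and continuous. This relies on the compatibility of the closed simplices~$|\sigma|$ under the standard topology on~$|X|$, which is fine since $X$ is finite. Consequently $|f|$ is homotopic to the homeomorphism~$i$, so $|f|$ is a homotopy equivalence. It then induces isomorphisms on singular homology with any coefficients~$\F$. Finally, the natural isomorphism between simplicial and singular homology for ordered simplicial chains shows that $H_k^\Delta(f;\F)$ is an $\F$-isomorphism for every field~$\F$ and every~$k$.

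The main obstacle is the first step: rigorously justifying, from the stated axioms (including convexity), that for every simplex~$\tau$ of~$X'$ the carriers of its vertices are faces of a single simplex~$\sigma$. The first approach is to take~$\sigma$ as the minimal simplex with $i(|\tau|) \subset |\sigma|$ and use that each point of~$|\sigma|$ has its open carrier among the faces of~$\sigma$. A minor secondary point is the ordering and sign convention on simplicial chains: $f$ need not preserve the vertex orderings, so the comparison should be made through singular homology, or by using the ordered simplicial complex, which is naturally equivalent.
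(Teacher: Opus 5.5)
Your argument is correct, but it takes a different route from the paper. You choose $f$ as an arbitrary carrier-vertex simplicial approximation of $i$ and show $|f| \simeq i$ by the straight-line homotopy inside a common closed simplex. You then conclude via homotopy invariance of singular homology and the natural isomorphism between simplicial and singular homology. As you note, this must be the version of the comparison that is natural for simplicial maps not respecting the vertex orderings.

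The paper instead fixes the specific choice $f(x) = \max \sigma_x$. It proves by a combinatorial induction that for each $\sigma \in X_n$ exactly one $n$-simplex of $X'$ inside $|\sigma|$ is mapped onto $\sigma$, while all others degenerate. From this it builds an explicit subdivision chain map $g \colon C_*^\Delta(X) \to C_*^\Delta(X')$ with $C_*^\Delta(f) \circ g = \id$. This gives surjectivity of $H_k^\Delta(f;\F)$ purely algebraically, and Lemma~\ref{lem:finquasiiso} upgrades surjectivity to bijectivity by a dimension count over the field $\F$. That dimension count still needs $\dim_\F H_k^\Delta(X';\F) = \dim_\F H_k^\Delta(X;\F)$, which comes from the homeomorphism $|X'| \cong |X|$. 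So the paper does not avoid the simplicial--singular comparison either; it only uses it more weakly.

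What each route buys:
\begin{itemize}
\item Yours is shorter, works for every choice of carrier vertex, and gives isomorphisms for arbitrary coefficients (including $\Z$), not just fields.
\item The paper's needs finiteness and field coefficients, but produces a chain-level section $g$, in keeping with the algebraic spirit of the rest of the paper.
\end{itemize}

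The step you flag as the main obstacle is not really one. That each $i(|\tau|)$ lies in some $|\sigma|$ follows directly from the second subdivision axiom: the subcomplexes $Y_\sigma$ are closed under faces and their realisations cover $|X'|$. Your argument does not need the convexity condition at all.
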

\begin{proof}
  We pick a total ordering on the vertices of~$X'$ (and hence
  also on the vertices and faces of~$X$). For each vertex~$x \in X'_0$,
  we write~$\sigma_x$ for the support simplex of~$x$ in~$X$,
  i.e., for the smallest simplex~$\sigma_x$ in~$X$ such that
  $|x|$ is contained in the relative interior of~$|\sigma_x|$.
  We define 
  \begin{align*}
    f \colon X'_0
    & \to X_0
    \\
    x
    & \mapsto \max \sigma_x
  \end{align*}
  and show the following:
  \begin{enumerate}
  \item\label{subdiv.1}
    The map~$f$ is a simplicial map~$X' \to X$.
  \item\label{subdiv.2}
    If $\sigma$ is a simplex of~$X$ and $n := \dim \sigma$,
    then there exists precisely one $n$-simplex~$\sigma'$
    of~$X'$ with
    \[
    |\sigma'|\subset |\sigma|
    \qand 
    f(\sigma') = \sigma.
    \]
    All other $n$-simplices~$\tau$ of~$X'$ with~$|\tau| \subset |\sigma|$
    satisfy~$\dim f(\tau) < n$.
  \item\label{subdiv.3}
    There exists a chain map~$g \colon C_*^\Delta(X) \to C_*^\Delta(X')$
    with~$C_*^\Delta(f) \circ g = \id_{C_*^\Delta(X)}$.
  \end{enumerate}
  Once these properties are established, we can apply
  Lemma~\ref{lem:finquasiiso} to conclude that $H_k^\Delta(f;\F)$ is an
  isomorphism for every field~$\F$ and every~$k \in \N$.
    
  Instead of invoking Lemma~\ref{lem:finquasiiso}, one could also
  geometrically construct a chain homotopy~$g \circ C_*^\Delta(f)
  \simeq \id_{C_*^\Delta(X')}$

 \emph{Ad~\ref{subdiv.1}.}
  Because $X'$ is a subdivision of~$X$, for every simplex~$\tau =
  \{x_0,\dots,x_n\} \in X'_n$, there exists a simplex~$\sigma$
  of~$X$ with~$\sigma_{x_0}, \dots, \sigma_{x_n} \subset \sigma$
  (every simplex~$\sigma$ with~$|\tau|\subset |\sigma|$ will do).
  Hence, also
  \[ f(\tau)
  = \bigl\{ f(x_0), \dots, f(x_n)\bigr\}
  = \{\max \sigma_{x_0}, \dots, \max \sigma_{x_n}\}
  \subset \sigma,
  \]
  and so $f(\tau)$ is a simplex of~$X$.
  Thus, $f$ is a simplicial map~$X' \to X$.

  \emph{Ad~\ref{subdiv.2}.}
  This can be shown by induction on the dimension~$n$: 
  The conditions are clearly satisfied for~$n = 0$. For
  the induction step, let $n \in\N$ and let the claim
  be shown in dimension~$n$. Let $\sigma \in X_{n+1}$.

  We first show existence of~$\sigma' \in X'_{n+1}$ with $|\sigma'|
  \subset |\sigma|$ and $f(\sigma') = \sigma$. By induction, for the
  $0$-face~$\sigma_0$ of~$\sigma$, there exists a unique $n$-simplex~$\sigma'_0 \in X'$
  with~$|\sigma'_0| \subset |\sigma_0|$ and $f(\sigma'_0) = \sigma_0$.
  Because $X'$ is a simplicial subdivision of~$X$, for
  topological reasons, there is exactly one $\sigma' \in X'_{n+1}$
  such that $\sigma'_0$ is a face of~$\sigma'$ and $\sigma'$
  has a vertex~$v$ in~$|\sigma|\setminus |\sigma'_0|$.
  By construction, $f(v)$ is the $0$-vertex of~$\sigma$ and
  so~$f(\sigma') = \sigma$.

  If $\tau \in X'_{n+1}$ with~$f(\tau) = \sigma$, then all vertices
  of~$\tau$ lie in~$|\sigma|$ (by construction of~$f$) and so $|\tau|
  \subset |\sigma|$ (by convexity). Thus, uniqueness of~$\sigma'$
  follows from the next property.

  Let $\tau \in X'$ with~$|\tau| \subset |\sigma|$ and $\dim f(\tau) = n+1$.
  Then, $\tau = \sigma'$. Indeed, by induction, there is only one
  $n$-simplex~$\sigma'_0 \in X'$ with~$f(\sigma'_0) = \sigma_0$. Hence,
  $\sigma'_0 \subset \tau$. Moreover, because $|\tau|\subset |\sigma|$,
  we have~$f(\tau) \subset \sigma$. Thus, $\dim f(\tau) = n+1$ implies
  that $f(\tau) = \sigma$ and so $\tau$ must have a vertex
  in~$|\sigma| \setminus |\sigma_0|$. But then $\tau$ must coincide
  with~$\sigma'$, as constructed in the first part of the induction
  step.
  
  \emph{Ad~\ref{subdiv.3}.}
  We consider the map
  \begin{align*}
    g \colon C_*^\Delta(X)
    & \to C_*^\Delta(X')
    \\
    X_n \ni \sigma
    & \mapsto
    \sum_{\tau \in X'_n} \varepsilon_\tau \cdot \tau,
  \end{align*}
  where $\varepsilon_\tau \in \{1,-1\}$ corresponds to the
  orientation of~$|\tau|$ relative to~$|\sigma|$.

  Inductively, we see that for each~$\sigma \in X_n$,
  the simplex~$\sigma' \in X'_n$ from Step~\ref{subdiv.2}
  satisfies~$\varepsilon_{\sigma'} =1$, and so the properties
  from Step~\ref{subdiv.2} guarantee that
  \[ C_*^\Delta(f) \circ g(\sigma)
  = 1 \cdot \sigma + 0
  = \id_{C_*^\Delta(X)}(\sigma).
  \qedhere
  \]
\end{proof}

\begin{lemma}\label{lem:finquasiiso}
  Let $X$, $Y$ be finite simplicial complexes, let $k \in \N$, and let
  $\F$ be a field with~$H_k(X;\F) \cong_\F H_k(Y;\F)$.  Let $f \colon
  Y \to X$ be a simplicial map that admits a chain map~$g \colon
  C_*^\Delta(X) \to C_*^\Delta(Y)$ with~$C_*^\Delta (f) \circ g \simeq
  \id_{C_*^\Delta(X)}$. Then $H_*^\Delta(f;\F) \colon H_*^\Delta(Y;\F) \to H_*^\Delta(X;\F)$
  is an $\F$-isomorphism.
\end{lemma}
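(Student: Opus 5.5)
The plan is a dimension count combined with functoriality. Since $C_*^\Delta(f)\circ g \simeq \id_{C_*^\Delta(X)}$ as $\Z$-chain maps, tensoring with~$\F$ preserves both the composition and the chain homotopy. This gives
\[ H_k^\Delta(f;\F) \circ H_k(g;\F) = \id_{H_k^\Delta(X;\F)}. \]
Hence $H_k^\Delta(f;\F) \colon H_k^\Delta(Y;\F) \to H_k^\Delta(X;\F)$ is surjective, and $H_k(g;\F)$ is injective.

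Next I would use finiteness. Because $X$ and $Y$ are finite simplicial complexes, $C_k^\Delta(X)\otimes_\Z\F$ and $C_k^\Delta(Y)\otimes_\Z\F$ are finite-dimensional. Thus both $H_k^\Delta(X;\F)$ and $H_k^\Delta(Y;\F)$ are finite-dimensional $\F$-vector spaces, and by hypothesis they have the same dimension. A surjective $\F$-linear map between finite-dimensional vector spaces of equal dimension is injective by rank--nullity, so $H_k^\Delta(f;\F)$ is an $\F$-isomorphism in degree~$k$.

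The main point needing care is the scope of the conclusion. The statement writes $H_*^\Delta(f;\F)$, but the dimension hypothesis is only given in degree~$k$. I would therefore read the lemma degreewise: for each $k$ with $H_k(X;\F)\cong_\F H_k(Y;\F)$, the map in degree~$k$ is an isomorphism. This is exactly how it is used in Lemma~\ref{lem:subdivquasiiso}.

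In that application, the required dimension equality holds in all degrees, because simplicial homology of $X'$ and of $X$ both agree with singular homology of the homeomorphic spaces $|X'|\cong|X|$. There one may even take $C_*^\Delta(f)\circ g = \id$ on the nose.

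Apart from this bookkeeping, no real obstacle is expected. The only routine check is that a $\Z$-chain homotopy $h$ yields the $\F$-chain homotopy $h\otimes_\Z\id_\F$.
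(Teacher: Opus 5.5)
Your proposal is correct and matches the paper's proof: you tensor the chain homotopy with~$\F$ to get surjectivity of $H_k^\Delta(f;\F)$, then use finite-dimensionality and equal dimensions to conclude that it is an isomorphism. Your degreewise reading of the conclusion also agrees with the paper's proof, which only argues in degree~$k$.
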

\begin{proof}
  Every chain homotopy~$C_*^\Delta(f) \circ g \simeq
  \id_{C_*^\Delta(X)}$ induces a chain homotopy~$C_*^\Delta(f;\F)
  \circ (g \otimes_\Z \id_\F) \simeq \id_{C_*^\Delta(X;\F)}$.  In
  particular, $H_k^\Delta(f;\F) \colon H_k^\Delta(Y;\F) \to
  H_k^\Delta(X;\F)$ is surjective. Because $H_*^\Delta(X;\F)$ and
  $H_*^\Delta(Y;\F)$ are finite-dimensional $\F$-vector spaces of the
  same dimension, $H_k^\Delta(f;\F)$ is an isomorphism.
\end{proof}

%%%%%%%%%%%%%
\subsection{Cohomological lifting/filling for subdivisions}\label{subsec:fillsubdiv}

We apply monotonicity of the cohomological lifting volume
to simplicial subdivisions to obtain uniform estimates
for cohomological liftings and fillings. 
In contrast with the original proof for estimates of this
type~\cite[Section~3]{CDMW}, the purely algebraic argument in
Theorem~\ref{thm:liftsubdiv} uses the flexibility of the algebraic
setting to bypass an intricate explicit construction of suitable
spanning trees. Moreover, this argument is robust enough to generalise
in a uniform way to all simplicial subdivisions:

\begin{thm}[cohomological lifting for subdivisions]\label{thm:liftsubdiv}
  Let $X$ be a finite simplicial complex, let $X'$ be a simplicial
  subdivision of~$X$, let $k \in \N$, and let $\F$ be a subfield of~$\R$. Then
  \[ L_k^\F\bigl(C_*^\Delta(X')\bigr)
  \leq L_k^\F\bigl(C_*^\Delta(X)\bigr).
  \]
\end{thm}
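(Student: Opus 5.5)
The plan is to apply monotonicity of the cohomological lifting volume (Proposition~\ref{prop:liftmono}) to the simplicial map~$f \colon X' \to X$ from Lemma~\ref{lem:subdivquasiiso}. Concretely, I take the $\F$-chain map
\[ f_* := C_*^\Delta(f) \otimes_\Z \id_\F \colon C_*^\Delta(X') \otimes_\Z \F \to C_*^\Delta(X) \otimes_\Z \F, \]
where both simplicial chain complexes are marked by their (ordered) simplices. There are three hypotheses to check, after which monotonicity gives $L_k^\F(C_*^\Delta(X')) \leq \|f_k\| \cdot L_k^\F(C_*^\Delta(X))$.

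First, $H_k(f_*) = H_k^\Delta(f;\F)$ is an $\F$-isomorphism. This is exactly the content of Lemma~\ref{lem:subdivquasiiso}, which applies because $\F$ is a field.

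Second, the integrality condition $H_k(f_*)(H_k(C_*^\Delta(X'))_\F) \subset H_k(C_*^\Delta(X))_\F$ holds. The reason is that $f_*$ is the base change of the $\Z$-chain map $C_*^\Delta(f)$, so by naturality of the coefficient change $H_k(\,\cdot\,;\Z) \to H_k(\,\cdot\,;\F)$ the image of an integral class is again integral.

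Third, I need $\|f_k\| \leq 1$ with respect to the $\ell^1$-norms. A simplicial map sends each ordered $k$-simplex $\tau$ of~$X'$ either to $\pm$ a single $k$-simplex of~$X$ (when $f$ is injective on the vertices of~$\tau$) or to~$0$ (when $f(\tau)$ is degenerate). Hence $|f_k(\tau)|_1 \leq 1$ for every marked basis element. Since the $\ell^1$-operator norm is the supremum over basis elements, this gives $\|f_k\| \leq 1$, and the desired inequality follows.

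The only step needing care is the sign and degeneracy convention in the norm estimate. With the chosen vertex orderings, the image of an ordered simplex is a reordering of a simplex of~$X$, which contributes a sign $\pm 1$, or it is degenerate and mapped to zero. In neither case does the coefficient exceed~$1$ in absolute value. The substantive work, namely that $f$ induces a homology isomorphism, was already done in Lemma~\ref{lem:subdivquasiiso}.
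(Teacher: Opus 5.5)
Your proposal is correct and follows the paper's proof exactly: apply monotonicity (Proposition~\ref{prop:liftmono}) to the simplicial map $f\colon X' \to X$ from Lemma~\ref{lem:subdivquasiiso}. You also check the same three points as the paper: $H_k^\Delta(f;\F)$ is an isomorphism, integrality holds because $f_*$ is defined over~$\Z$, and $\|C_k^\Delta(f;\F)\| \leq 1$ because a simplicial map sends each marked simplex to $\pm$ a simplex or to~$0$.
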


The right-hand side is finite by Corollary~\ref{cor:liftfin}.

\begin{proof}
  Because $X'$ is a subdivision of~$X$, 
  there exists a simplicial map~$f \colon X' \to X$
  such that $H_k^\Delta(f;\F) \colon H_k^\Delta(X';\F) \to H_k^\Delta(X;\F)$
  is an isomorphism (Lemma~\ref{lem:subdivquasiiso}).
  Moreover, $f$ being simplicial implies that
  $\|C_k^\Delta(f;\F)\| \leq 1$ (with respect to the marked
  bases given by the simplices of $X'$ and~$X$, respectively)
  and that $H_k^\Delta(f;\F)$ maps integral classes to integral classes.   
  Monotonicity of the lifting volume (Proposition~\ref{prop:liftmono})
  thus gives
  \[ L_k^\F\bigl(C_*^\Delta(X')\bigr)
  \leq \bigl\|C_k^\Delta(f;\F) \| \cdot L_k^\F\bigl(C_*^\Delta(X)\bigr)
  \leq 1 \cdot L_k^\F\bigl(C_*^\Delta(X)\bigr).
  \qedhere
  \]
\end{proof}

\begin{cor}[integral cohomological fillings for subdivisions~\protect{\cite[Lem\-ma~3.9]{CDMW}}]
  \label{cor:fillsubdiv}
  Let $X$ be a finite simplicial complex, let $k \in \N$,
  and let $\F$ be a subfield of~$\R$. Then there exists
  a constant~$c \in \R_{>0}$ such that:
  If $X'$ is a simplicial subdivision of~$X$ and
  $f \in C_\Delta^k(X';\F)$ is a simplicial cochain such that $\delta f$ is a coboundary
  in~$C_\Delta^*(X';\Z)$, then there exists a cochain~$\widetilde f \in C_\Delta^k(X';\Z)$
  with
  \[ \delta \widetilde f = \delta f
  \qand
  |\widetilde f|_\infty \leq |f|_\infty +c.
  \]
\end{cor}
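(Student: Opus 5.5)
We combine the integral cohomological filling estimate (Proposition~\ref{prop:intcohfill}) with the uniform control on the lifting volume for subdivisions (Theorem~\ref{thm:liftsubdiv}). The one remaining issue is the term $\frac12 \cdot \|\partial_k\|$ in Proposition~\ref{prop:intcohfill}, which a priori depends on the subdivision~$X'$. However, in the simplicial chain complex the boundary of a $k$-simplex is an alternating sum of its $k+1$ faces. So for every simplicial complex $Y$ we have
\[ \bigl\|\partial_k^{C_*^\Delta(Y)}\bigr\| \leq k+1 \]
with respect to the $\ell^1$-norms on the marked bases of simplices. For $k = 0$ the boundary operator is zero; in any case the bound $k+1$ holds. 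This bound is independent of~$Y$.

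We therefore set
\[ c := \frac{k+1}{2} + L_k^\F\bigl(C_*^\Delta(X)\bigr) + 1 . \]
This is a finite positive real number, since $L_k^\F(C_*^\Delta(X)) < \infty$ by finiteness (Corollary~\ref{cor:liftfin}), $X$ being a finite simplicial complex.

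Now let $X'$ be a simplicial subdivision of~$X$ and let $f \in C^k_\Delta(X';\F) = \Hom_\Z(C_k^\Delta(X'),\F)$ be such that $\delta f$ is a coboundary in $C^*_\Delta(X';\Z)$. We apply Proposition~\ref{prop:intcohfill} to the marked free $\Z$-chain complex $C_*^\Delta(X')$ with $\varepsilon := 1$. This yields $\widetilde f \in C^k_\Delta(X';\Z)$ with $\delta \widetilde f = \delta f$ and
\[ |\widetilde f - f|_\infty \leq \frac12 \cdot \|\partial_k\| + L_k^\F\bigl(C_*^\Delta(X')\bigr) + 1 \leq \frac{k+1}{2} + L_k^\F\bigl(C_*^\Delta(X)\bigr) + 1 = c . \]
The middle estimate uses Theorem~\ref{thm:liftsubdiv}. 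The triangle inequality then gives $|\widetilde f|_\infty \leq |f|_\infty + c$.

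There is no serious obstacle. The only points needing care are the uniform bound on $\|\partial_k\|$ across all subdivisions and the finiteness of the right-hand side of Theorem~\ref{thm:liftsubdiv}. The latter is exactly where finiteness of $X$, rather than of $X'$, matters: this is what makes $c$ depend only on $X$, $k$ and $\F$.
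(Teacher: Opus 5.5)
Your proposal is correct and matches the paper's proof essentially verbatim. The paper also applies Proposition~\ref{prop:intcohfill} with $\varepsilon = 1$, bounds $\|\partial_k^{\Delta,X'}\|$ by $k+1$, replaces $L_k^\F(C_*^\Delta(X'))$ by $L_k^\F(C_*^\Delta(X))$ via Theorem~\ref{thm:liftsubdiv}, and uses Corollary~\ref{cor:liftfin} for finiteness, arriving at the same constant $c = \frac12 (k+1) + L_k^\F(C_*^\Delta(X)) + 1$.
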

\begin{proof}
  We combine the lifting volume estimate for subdivisions (Theorem~\ref{thm:liftsubdiv})
  with the integral cohomological filling estimate (Proposition~\ref{prop:intcohfill}). 
  We obtain a cochain~$\widetilde f \in C_\Delta^*(X';\Z)$
  satisfying~$\delta \widetilde f = \delta f$ and 
  \begin{align*}
    |\widetilde f|_\infty
    & \leq |f|_\infty + \frac12 \cdot \|\partial_k^{\Delta,X'}\|
    + L_k^\F\bigl(C_*^\Delta(X')\bigr)
    + 1
    \\
    & \leq |f|_\infty +\frac12 \cdot (k+1) + L_k^\F\bigl(C_*^\Delta(X) \bigr) + 1.
  \end{align*}
  The term~$1/2 \cdot (k+1) + L_k^\F(C_*^\Delta(X)) + 1$ is finite
  (Corollary~\ref{cor:liftfin}) and depends only on~$k$ and~$X$.
\end{proof}

In Section~\ref{subsec:gnarlgeo}, we relate this approach
to the original one via spanning trees. 

%%%%%%%%%%%%%%%%%%%%%%%%%%%%%%%%%%%%%%%%%%%%%%%%%%%%%%%%%%%%%%%%%
\section{Gnarledness of chain complexes}\label{sec:gnarl}

We introduce a definition of gnarledness of chain complexes, inspired
by the notion of gnarledness of simplicial complexes by Chambers,
Dotterrer, Manin, and Weinberger~\cite{CDMW}. This algebraic gnarledness
is based on chain extensions and measures how close a chain complex
is to a chain complex with trivial boundary operator and so, by design,
is universal among size invariants. On the one hand,  
the chain extension estimate relates the gnarledness of chain complexes
to the cohomological lifting volume. On the other hand, spanning trees give 
chain extensions, which provides a connection with the original,
geometric, notion of gnarledness of simplicial complexes. Furthermore,
we may express the simplicial volume in terms of gnarledness of
the singular chain complex.

%%%%%%%%%%%%%%%%%
\subsection{Definition and basic examples}

We define the gnarledness of chain complexes in terms of the minimal
``size'' of chain extensions.

\begin{defi}[gnarledness]\label{def:gnarl}
  Let $C_*$ be a marked free $\Z$-chain complex, 
  let $k \in \N$, and let $\F$ be a subfield of~$\R$. We write~$B_k^\F(C_*)$
  for the set of all cointegral $\F$-bases of~$H_k(C_*;\F)$. 
  \begin{itemize}
  \item
    If $s$ is a chain extension of~$C_*$ in degree~$k$ over~$\F$,
    we set
    \[ G_k^\F(C_*,s) := \inf_{B \in B_k^\F(C_*)} \|s\|^B.
    \]
  \item
    The \emph{gnarledness of~$C_*$} in degree~$k$ over~$\F$ is
    defined as
    \[ G_k^\F(C_*)
    := \inf
    \bigl\{ G_k^\F(C_*,s)
    \bigm| \text{$s$ is a chain extension of~$C_*$ in degree~$k$ over~$\F$}
    \bigr\}.
    \]
  \end{itemize}
\end{defi}

\begin{example}
  We consider the $\Z$-chain complex~$C_*$ given by
  \[
  \begin{tikzcd}
    \Z \ar{r}{0}
    & \Z,
  \end{tikzcd}
  \]
  concentrated in degrees~$1$,~$0$ with the marked bases given
  by~$1 \in \Z$. This is the cellular chain complex of~$S^1$
  with respect to the CW-structure consisting of a single $0$-cell
  and a single $1$-cell.

  We have~$G_1^\F(C_*) = 1$ for every subfield~$\F$ of $\R$, which can be
  calculated as follows: Because $\partial_1 = 0$, there is only
  one chain extension~$s$ of~$C_*$ in degree~$1$ over~$\F$, namely
  the canonical projection map
  \[ s \colon C_1 \otimes_\Z \F = \ker (\partial_1\otimes_\Z \id_\F)
  \to H_1(C_*;\F),
  \]
  which is an isomorphism as $\partial_2 = 0$.
  Let $B = (b) \in B_1^\F(C_\ast)$. Then the coefficient~$\lambda \in \F$
  with~$s(1) = \lambda \cdot b$ is a non-zero integer. Therefore,
  \[ \|s\|^B = |\lambda| \geq 1.
  \]
  Conversely, $(s(1)) \in B_1^\F(C_\ast)$ and $\|s\|^{(s(1))} =1$. 
  Hence, $G_1^\F (C_*,s) = 1$.  Since $s$ is the only chain extension
  of $C_*$ in degree $1$ over $\F$, we conclude~$G_1^\F(C_*)=1$.
\end{example}

\begin{example}\label{exa:mapcylgnarl}
  Let $d \in \N_{>0}$. 
  We consider the $\Z$-chain complex~$C_*$ given by
  \[
  \begin{tikzcd}
    \Z
    \ar{r}{(d,1)}
    & 
    \Z \oplus \Z
    \ar{r}{0}
    & \Z,
  \end{tikzcd}
  \]
  concentrated in degrees~$2$, $1$,~$0$ with the marked bases given
  by~$1 \in \Z$ and $(e_1,e_2) := ((1,0), (0,1))$ of~$\Z\oplus
  \Z$. This is related to a cellular chain complex of the mapping
  cylinder of a $d$-fold covering~$S^1 \to S^1$.

  We have~$G_1^\F(C_*) = d$ for every subfield~$\F$ of $\R$, which can be
  calculated as follows: 
  Because $\partial_1=0$, there is again only one chain extension $s$
  of $C_\ast$ in degree $1$ over~$\F$, namely the canonical projection
  map
  \[ s \colon C_1 \otimes_\Z \F = \ker (\partial_1\otimes_\Z \id_\F)
  \to H_1(C_*;\F).
  \]
  Because $s$ is a chain extension, we have~$s((d,1)) = 0$ and so
  \[ s(e_2) = s( e_2 - (d,1) ) = -d \cdot s(e_1);
  \]
  in particular, surjectivity of~$s$ shows that $s(e_1) \neq 0$. 
  If $B \in B_1^\F(C_*)$ is a cointegral basis of~$H_1(C_*;\F)$,
  then the above calculation and the integrality condition lead to
  \[ \|s\|^B
  = \max \bigl( |s(e_1)|_1^B, |s(e_2)|_1^B \bigr)
  = d \cdot |s(e_1)|_1^B
  \geq d \cdot 1
  = d.
  \]
  Hence, $G_1^\F(C_*,s)\geq d$.
  Conversely, $B := (s(e_1))$ lies in~$B_1^\F(C_*)$ and
  satisfies~$\|s\|^B = d$.
  Thus, $G_1^\F(C_*) = G_1^\F(C_*,s) = d$.
\end{example}

\begin{example}\label{exa:mapcylgnarlcovering}
  The above Example~\ref{exa:mapcylgnarl} can be modified so that we
  obtain every rational number as the gnarledness of a finite marked
  free $\Z$-chain complex.
  
  We do this by looking at the cellular chain complex of finite
  coverings of the CW-structure behind Example~\ref{exa:mapcylgnarl}:
  Let $m,d\in \N_{>0}$ and let the $\Z$-chain complex $C_*\langle
  m\rangle$ be given by
  \[
  \begin{tikzcd}[row sep=0.1em]
    \Z^m
    \ar{r}{\partial_2}
    & 
    \Z^m \oplus \Z^m
    \ar{r}{\partial_1}
    & \Z^m\\
    e_k 
    \arrow[r, maps to]
    &\Bigl(\sum_{i=0}^{d-1} e_{k+i}, e_k\Bigr) &\\
    & (e_k,0) 
    \arrow[r, maps to]
    & -e_k + e_{k+1}\\
    & (0,e_k) 
    \arrow[r, maps to]
    & +e_k - e_{k+d}
  \end{tikzcd}
  \]
  concentrated in degrees $2$, $1$, $0$ and marked with the standard basis in every degree. 
  Indices shall be considered modulo $m$.
  Then $G_1^\F\bigr(C_*\langle m\rangle\bigl) = \frac{d}{m}$:

  It is straightforward to check that $H_1\bigl(C_*\langle
  m\rangle;\F\bigr)$ is one-dimensional and that the integral part
  $H_1\bigl(C_*\langle m\rangle \bigr)_\F$ is generated by the
  homology class
  \[
  b:= \Biggl[\biggl(\sum_{i=1}^{m} e_{i},0\biggr)\Biggr].
  \]
  Similarly to Example~\ref{exa:mapcylgnarl}, we have 
  \[ \Biggl[\biggl(0,\sum_{i=1}^{m} e_{i}\biggr)\Biggr]
  = \Biggl[-\biggl( \sum_{i=1}^m\sum_{j=1}^{d-1} e_{j+i},0\biggr)\Biggr]
  = \Biggl[-d\cdot\biggl( \sum_{i=1}^{m} e_{i},0\biggr)\Biggr]. 
  \]
  Let $B$ be a cointegral basis of $H_1\bigl(C_*\langle m\rangle;\F\bigr)$. 
  Then $B = \bigl(\frac{1}{\lambda}\cdot b\bigr)$ for some non-zero integer~$\lambda$. 
  If $s\colon C_1\langle m\rangle\otimes_\Z\F \to H_1\bigl(C_*\langle
  m\rangle;\F\bigr)$ is a chain extension of~$C_*\langle m\rangle$ in
  degree $1$ over $\F$, we obtain
  \begin{align*} 
    \sum_{i=1}^m \bigl|s(0,e_i)\bigr|_1^B &\geq \Biggl|s\biggl(0,\sum_{i=1}^m e_i\biggr)\Biggr|_1^B
    = \Biggl|\Biggl[\biggl(0,\sum_{i=1}^m e_i\biggr)\Biggr]\Biggr|_1^B
    = \Biggl|-d\cdot\Biggl[\biggl(\sum_{i=1}^m e_i,0\biggr)\Biggr]\Biggr|_1^B
    \\
    &
    = d \cdot| b|^B = d\cdot |\lambda|.
  \end{align*}
  Hence, we have $\|s\|^B \geq \frac{d\cdot |\lambda|}{m}$.
  Taking the infimum over all chain extensions and cointegral bases gives $G_1^\F(C_*\langle m\rangle) \geq \frac{d}{m}$.

  On the other hand, we consider the $\F$-linear map given by 
  \begin{align*}
    s\colon C_1\langle m\rangle \otimes_\Z \F &\to H_1(C_*\langle m\rangle;\F)\\
     (e_k,0) &\mapsto \frac{1}{m} \cdot b\\
     (0,e_k) &\mapsto -\frac{d}{m} \cdot b.
  \end{align*}
  Then $s$ is a chain extension of $C_*\langle m\rangle$ in degree $1$
  over $\F$ and for the cointegral basis $B=(b)$ we compute
  $G_1^\F\bigl(C_*\langle m\rangle\bigr) \leq \|s\|^B = \frac{d}{m}$.

  This inequality is also a consequence of monotonicity of gnarledness
  (Proposition~\ref{prop:gnarlmono}): Since $C_*\langle m\rangle$ is
  related to the cellular complex of the $m$-fold covering of the
  mapping cylinder of Example~\ref{exa:mapcylgnarl}, transfer gives us
  the chain map
  \begin{align*}
    f_* : C_*\langle m\rangle\otimes_\Z \F &\to C_*\otimes_\Z\F\\
    \sigma &\mapsto \frac{1}{m}\cdot p(\sigma),
  \end{align*}
  where $C_*$ denotes the chain complex of
  Example~\ref{exa:mapcylgnarl}, $p$ is the covering map and $\sigma$
  is a cell of the total space.  By monotonicity and
  Example~\ref{exa:mapcylgnarl}, we obtain
  \[ G_1^\F\bigl(C_*\langle m\rangle\bigr) \leq \frac{1}{m} \cdot G_1^\F(C_*) = \frac{d}{m}.
  \]
\end{example}

%%%%%%%%%%%%%%%%%
\subsection{Comparison with the lifting volume}\label{subsec:liftgnarl}

Up to a Betti number factor, gnarledness coincides with the
lifting volume. Hence, gnarledness provides good estimates
for liftings and integral fillings of cohomology classes.

\begin{prop}\label{prop:liftgnarl}
  Let $C_*$ be a marked free $\Z$-chain complex, 
  let $k \in \N$, and let $\F$ be a subfield of~$\R$. 
  Let $H_k(C_*;\Z)$ be finitely generated. % only needed for the second part
  Then:
  \[ L_k^\F(C_*)
  \leq \frac12 \cdot G_k^\F(C_*)
  \leq \dim_\F H_k(C_*;\F) \cdot L_k^\F(C_*).
  \]
\end{prop}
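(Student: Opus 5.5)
The first inequality should follow directly from the chain extension estimate. For every chain extension~$s$ of~$C_*$ in degree~$k$ over~$\F$ and every $B \in B_k^\F(C_*)$, Corollary~\ref{cor:liftchainext} gives $L_k^\F(C_*) \leq \frac12 \cdot \|s\|^B$. Taking the infimum over~$B$ and then over~$s$ yields $L_k^\F(C_*) \leq \frac12 \cdot G_k^\F(C_*)$.

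For the second inequality, set $n := \dim_\F H_k(C_*;\F)$. We may assume $n > 0$ (otherwise $G_k^\F(C_*) = 0$ by Example~\ref{exa:vanishing}) and $L := L_k^\F(C_*) < \infty$. The plan is to build a chain extension and a cointegral basis by hand from short integral cohomology classes. By Remark~\ref{rem:UCT} we identify $V := H^k(C_*;\F)$ with $\Hom_\F(H_k(C_*;\F),\F)$, and we equip~$V$ with the semi-norm~$\|\cdot\|_\infty$.

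Let $\Lambda := H_k(C_*)_\F$. It is finitely generated because $H_k(C_*;\Z)$ is, and it spans $H_k(C_*;\F)$. Let $\Lambda^* \subset V$ be the set of classes taking integer values on~$\Lambda$. By Remark~\ref{rem:UCT}, $\Lambda^*$ is exactly the kernel of $\pi \colon V \to H^k(C_*;\F/\Z)$. The definition of~$L$ therefore says that $L$ is the covering radius of~$\Lambda^*$ in~$V$: every $y \in V$ lies within distance $L+\varepsilon$ of~$\Lambda^*$, for every~$\varepsilon > 0$.

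The key step, which I expect to be the main obstacle, is a successive-minima type bound: for every $\varepsilon > 0$ there are $\F$-linearly independent $\varphi_1,\dots,\varphi_n \in \Lambda^*$ with $\|\varphi_j\|_\infty \leq 2L + \varepsilon$. I would argue inductively. Suppose all elements of~$\Lambda^*$ of norm at most $2L+\varepsilon$ lie in a proper subspace~$W$. Choose $y \in V$ whose distance to~$W$ is slightly larger than $L + \varepsilon/3$ and whose norm $\|y\|_\infty$ is close to that distance; one gets such a~$y$ by subtracting a near-best approximation in~$W$ and scaling. Then pick $u \in \Lambda^*$ with $\|y-u\|_\infty \leq L + \varepsilon/3$. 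This $u$ cannot lie in~$W$, yet $\|u\|_\infty \leq \|y\|_\infty + L + \varepsilon/3 \leq 2L + \varepsilon$, a contradiction. The technical points to watch are that $\F$ is only dense in~$\R$ and that $\|\cdot\|_\infty$ is a priori only a semi-norm; both should be handled by the $\varepsilon$-slack together with finite dimensionality of~$V$.

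Given such~$\varphi_j$, let $B = (b_1,\dots,b_n)$ be the dual $\F$-basis of $H_k(C_*;\F)$. This basis is cointegral: for $x \in \Lambda$ we have $x = \sum_j \varphi_j(x)\, b_j$ with $\varphi_j(x) \in \Z$. Next choose representing cocycles $\psi_j \in \Hom_\Z(C_k,\F)$ of~$\varphi_j$ with $|\psi_j|_\infty \leq 2L + 2\varepsilon$, and define
\[ s(\sigma) := \sum_{j=1}^n \psi_j(\sigma) \cdot b_j. \]
For a cycle~$c$ this gives $s(c) = \sum_j \varphi_j([c])\, b_j = [c]$, so $s$ is a chain extension. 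Moreover $\|s\|^B = \sup_\sigma \sum_j |\psi_j(\sigma)| \leq n \cdot (2L + 2\varepsilon)$. Hence $G_k^\F(C_*) \leq 2n \cdot L$ after letting $\varepsilon \to 0$, which is the second inequality.
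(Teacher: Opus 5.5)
Your first inequality is the paper's argument. For the second, your overall architecture also matches the paper's: find $n$ independent elements of $\Lambda^*$ of norm about $2L$, take the dual basis (automatically cointegral), and assemble $s$ from representing cocycles. The difference is how the short independent elements are produced, and your route is genuinely different.

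The paper does not argue by contradiction. It takes the basis $(\psi_\beta)_\beta$ of $\Lambda^*$ dual to a $\Z$-basis of $H_k(C_*)_\F$. It lifts the half-classes $\pi(\frac12\psi_\beta)$ to classes $\varphi_\beta$ represented by cocycles of norm at most $L+\varepsilon$. Then $2\varphi_\beta\in\Lambda^*$ and $2\varphi_\beta\equiv\psi_\beta$ modulo $2\Lambda^*$. So the integral matrix $(\langle 2\varphi_\beta,\gamma\rangle)_{\beta,\gamma}$ is congruent to the identity mod~$2$ and has odd determinant, and Lemma~\ref{lem:exdualco} gives linear independence and the cointegral dual basis at once. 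This is the parity proof, via $\Lambda^*/2\Lambda^*$, that the last successive minimum is at most twice the covering radius; yours is the Riesz-lemma proof of the same fact. The paper's version uses only the triangle inequality and homogeneity of $\|\cdot\|_\infty$. It is therefore insensitive to infinite values, degeneracy, and density of~$\F$, and it is fully explicit. Yours is more geometric and makes the covering-radius meaning of $L$ transparent, but it needs extra input.

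That input is exactly the point you wave through. Your choice of~$y$ requires that, for every proper subspace $W\subset V$, the function $d(\cdot,W)$ be finite and not identically zero. Finite dimensionality plus $\varepsilon$-slack does not give this for a general semi-norm. For $|a-\sqrt2\,b|$ on $\R^2$ with lattice $\Z^2$, every line other than the null line has distance zero from every point. Your argument then cannot reach a contradiction, although the conclusion still holds there.

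In the present setting the property is true, and you should say why.

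First, $L<\infty$ forces $\|\cdot\|_\infty<\infty$ on $V$. The finite-norm classes form an $\F$-subspace $V_{\mathrm{fin}}$ with $V=V_{\mathrm{fin}}+\Lambda^*$. Hence $V/V_{\mathrm{fin}}$ is a finitely generated divisible abelian group, so it is trivial.

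Second, if $W\subsetneq V$, choose $0\neq\alpha=[c]\in H_k(C_*;\F)$ annihilated by $W$. Every cocycle $f$ representing $y-w$ satisfies $|\langle y-w,\alpha\rangle|=|f(c)|\leq|f|_\infty\cdot|c|_1$. This gives $d(y,W)\geq|\langle y,\alpha\rangle|/|c|_1>0$ whenever $\langle y,\alpha\rangle\neq0$.

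With these two facts inserted, your construction of $y$, the contradiction, and the rest of your argument go through and give the same constant as the paper.
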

\begin{proof}
  The first estimate is the chain extension estimate
  (Corollary~\ref{cor:liftchainext}).

  For the second estimate, we construct chain extensions
  from efficient liftings:
  More precisely, we show the following for each~$\varepsilon \in \R_{>0}$:
  \begin{enumerate}
  \item\label{i:liftgnarl:exdualco}
    There exists
    a cointegral basis~$B$ of~$H_k(C_*;\F)$ 
    such that the dual family~$(\varphi_\beta)_{\beta \in B}$ in~$H^k(C_*;\F)$ 
    admits representing cocycles~$(f_\beta)_{\beta \in B}$ with
    \[ |f_\beta|_\infty \leq 2 \cdot \bigl(L_k^\F(C_*) + \varepsilon\bigr).
    \]
  \item\label{i:liftgnarl:chainext}
    The associated map 
    \begin{align*}
      s \colon
      C_k \otimes_\Z \F
      & \longrightarrow H_k(C_*;\F)
      \\
      x \otimes \lambda
      & \longmapsto
      \lambda \cdot 
      \sum_{\beta \in B} f_\beta(x) \cdot \beta
    \end{align*}
    is a chain extension of~$C_*$ over~$\F$ in degree~$k$ 
  \item\label{i:liftgnarl:chainextnorm}
    and~$\|s\|^B \leq 2 \cdot \dim_\F H_k(C_*;\F) \cdot \bigl(L_k^\F(C_*) + \varepsilon\bigr)$.
  \end{enumerate}
  Taking~$\varepsilon \to 0$ then proves the claimed estimate.

  \emph{Ad~\ref{i:liftgnarl:exdualco}.} 
  Let $B$
  be a cointegral basis of~$H_k(C_*;\F)$ obtained from
  a $\Z$-basis of~$H_k(C_*)_\F$.
  Let $(\psi_\beta)_{\beta \in B}$ in~$H^k(C_*;\F)$ be dual to~$B$.
  We apply the lifting volume to suitable reductions of this family;
  however, as this family is dual to~$B$ its reduction in~$H^k(C_*;\F/\Z)$
  is trivial. Therefore, to get meaningful classes, for~$\beta \in B$, 
  let $\overline \psi_\beta := \pi(1/2 \cdot \psi_\beta) \in H^k(C_*;\F/\Z)$.
  By definition of the lifting volume, there exist $\pi$-lifts~$\varphi_\beta
  \in H^k(C_*;\F)$ of~$\overline \psi_\beta$ and cocycles~$f_\beta
  \in \Hom_\Z(C_k;\F)$ representing~$\varphi_\beta$ with
  \[ |f_\beta|_\infty \leq L_k^\F(C_*) + \varepsilon.
  \]
  In general, the family~$(2 \cdot \varphi_\beta)_{\beta \in B}$ will not be dual
  to~$B$, but we still have almost-duality of the following form:
  For all~$\beta,\gamma \in B$, we have
  \[ \langle 2\cdot \varphi_\beta, \gamma \rangle
  \in
  2 \cdot \Bigl(
  \Z + \Bigl\langle \frac12\cdot \psi_\beta, \gamma\Bigr\rangle
  \Bigr)
  = 
  2 \cdot \Bigl(
  \Z + \frac12 \cdot \langle \psi_\beta, \gamma\rangle
  \Bigr)
  = 
  \begin{cases}
    2 \cdot \Z & \text{if $\beta\neq\gamma$}
    \\
    2 \cdot \Z + 1 & \text{if $\beta=\gamma$},
  \end{cases}
  \]
  because $\pi(\varphi_\beta) = \pi(1/2 \cdot \psi_\beta)$
  and the family~$(\psi_\beta)_{\beta \in B}$ is dual to~$B$. 
  
  Applying the algebraic Lemma~\ref{lem:exdualco} below shows
  that there exists a cointegral basis~$B'$ of~$H_k(C_*;\F)$
  dual to the family~$(2 \cdot \varphi_\beta)_{\beta \in B}$.
  Moreover, $2 \cdot f_\beta$ is a cocycle
  representing~$2 \cdot \varphi_\beta$ with
  \[ |2 \cdot f_\beta|_\infty \leq 2 \cdot \bigl(L_k^\F(C_*) + \varepsilon\bigr).
  \]
  Renaming the family and bases thus proves the first claim. 
  In the following, we assume that $B$, $(\varphi_\beta)_{\beta \in B}$,
  and~$(f_\beta)_{\beta \in B}$ are chosen as in Claim~\ref{i:liftgnarl:exdualco}. 
  
  \emph{Ad~\ref{i:liftgnarl:chainext}.}
  Let $\Sigma_k$ be the marked basis of~$C_k$
  and let $c = \sum_{\sigma \in \Sigma_k} \sigma \otimes \lambda_\sigma \in C_k \otimes_\Z \F$
  be a cycle. By construction of~$s$ as in Claim~\ref{i:liftgnarl:chainext}, we have 
  \begin{align*}
    s(c)
    & = \sum_{\sigma \in \Sigma_k} \lambda_\sigma \cdot \sum_{\beta \in B} f_\beta(\sigma) \cdot \beta
    = \sum_{\beta \in B}
    \biggl( \sum_{\sigma \in \Sigma_k} \lambda_\sigma \cdot f_\beta(\sigma) \biggr)
    \cdot \beta
    \\
    & = \sum_{\beta \in B}
    \bigl\langle [f_\beta], [c] \bigr\rangle
    \cdot \beta
    = \sum_{\beta \in B}
    \bigl\langle \varphi_\beta, [c] \bigr\rangle
    \cdot \beta.
  \end{align*}
  Because $(\varphi_\beta)_{\beta \in B}$ is dual to~$B$, this
  simplifies to~$s(c) = [c]$.
  Thus, $s$ is a chain extension.

  \emph{Ad~\ref{i:liftgnarl:chainextnorm}.}
  By definition, 
  $
    \|s \|^B
    = \max_{\sigma \in \Sigma_k} \bigl|s(\sigma \otimes 1)\bigr|_1^B.
  $ 
  For each~$\sigma \in \Sigma_k$, we obtain
  \begin{align*}
    \bigl|s(\sigma \otimes 1)\bigr|_1^B
    & = \biggl| \sum_{\beta \in B} f_\beta(\sigma) \cdot \beta \biggr|_1^B
    = \sum_{\beta \in B} \bigl|f_\beta(\sigma)\bigr|
    \leq \sum_{\beta \in B} |f_\beta|_\infty
    \\
    & \leq \# B \cdot 2 \cdot \bigl(L_k^\F(C_*) + \varepsilon\bigr)
    = 2 \cdot \dim_\F H_k(C_*;\F) \cdot \bigl(L_k^\F(C_*) + \varepsilon\bigr).
    \qedhere
  \end{align*}
\end{proof}

In particular, if $\dim_\F H_k(C_*;\F)=1$,
then $2 \cdot L_k^\F(C_*) = G_k^\F(C_*)$.  

\begin{lemma}\label{lem:exdualco}
  Let $n \in \N_{>0}$ and let $(\varphi_j \colon \F^n \to \F)_{j \in \{1,\dots,n\}}$
  be $\Z$-linear maps with the following property:
  \[ \fa{j,k \in \{1,\dots,n\}}
  \varphi_j(e_k) \in
  \begin{cases}
    2 \cdot \Z & \text{if $j \neq k$}
    \\
    2 \cdot \Z + 1 & \text{if $j = k$}.
  \end{cases}
  \]
  Then, there exists a cointegral basis~$B$ of~$\F^n$ such that
  $(\varphi_j)_{j \in \{1,\dots,n\}}$ is dual to~$B$.
  Here, \emph{cointegral} means that $\Z^n \subset \Z B$.
\end{lemma}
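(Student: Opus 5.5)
Let $A\in \MatM_{n\times n}(\Z)$ be the matrix with entries $A_{jk} := \varphi_j(e_k)$. A family $B=(b_1,\dots,b_n)$ in $\F^n$ is dual to $(\varphi_j)_j$ if and only if $\varphi_j(b_k)=\delta_{jk}$ for all $j,k$. So the plan is to find $B$ explicitly from $A^{-1}$ and then check cointegrality via $A$ itself.

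First, $A$ is invertible over $\Q$. Indeed, modulo~$2$ the hypothesis gives $A \equiv \mathrm{id}$. Hence $\det A \equiv 1 \pmod 2$, so $\det A$ is an odd integer and in particular nonzero.

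Next, define $B$. We use that each $\varphi_j$ is $\Z$-linear on the $\Q$-vector space $\F^n$, hence $\Q$-linear there. This holds because $\Z$-linear maps between $\Q$-vector spaces are automatically $\Q$-linear: from $m\,\varphi(x/m)=\varphi(x)$ and the torsion-freeness of $\F$. Therefore $\varphi_j$ is determined on $\Q^n$ by its values $A_{jk}$ on the $e_k$. We set
\[ b_k := \sum_{l} (A^{-1})_{lk}\, e_l \in \Q^n \subset \F^n, \]
so that $\varphi_j(b_k) = (A A^{-1})_{jk} = \delta_{jk}$. The $b_k$ form an $\F$-basis of $\F^n$, since $A^{-1}$ is invertible.

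Then we check cointegrality. Conversely, $e_k = \sum_l A_{lk}\, b_l$, because the matrix expressing the $e$'s in terms of the $b$'s is $A$. Since $A$ has integer entries, $e_k \in \Z B$, and hence $\Z^n \subset \Z B$.

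The only subtle point is the use of $\Q$-linearity, needed to evaluate $\varphi_j$ on vectors with rational coordinates. This works since only $\Q$-coefficients, never general $\F$-coefficients, are involved. The parity hypothesis matters only for invertibility of $A$.
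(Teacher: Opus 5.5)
Your proof is correct and follows the paper's argument: the paper also takes the integer matrix $A=(\varphi_j(e_k))_{j,k}$, shows that $\det A$ is odd (via the Leibniz formula, which amounts to your reduction modulo~$2$), lets $B$ be the columns of the rational inverse $A^{-1}$, and obtains cointegrality from $A^{-1}A=I_n$ together with integrality of~$A$. Your remark that $\Z$-linear maps $\F^n\to\F$ are automatically $\Q$-linear spells out a step that the paper uses without comment when it asserts that $(\varphi_j)_j$ is dual to~$B$.
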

\begin{proof}
  We consider the matrix
  \[ A :=
  \bigl( \varphi_j(e_k) \bigr)_{j,k \in \{1,\dots,n\}}.
  \]
  In view of our hypothesis, this matrix has integer coefficients,
  whose diagonal entries are odd and whose 
  off-diagonal entries are even. By the Leibniz
  expansion formula, $\det A$ is an odd integer. In particular,
  $\det A \neq 0$. Therefore, there exists a
  matrix~$\overline A \in \mat nn \Q$ that is inverse to~$A$.
  Let $B$ be the family of columns of~$\overline A$. By invertibility, 
  $B$ is an $\F$-basis of~$\F^n$ and $(\varphi_j)_{j \in \{1,\dots, n\}}$
  is dual to~$B$.
  
  It remains to show that $B$ is cointegral. Integrality of~$A$ and
  $\overline A \cdot A = I_n$ witness that each~$e_k$ lies in~$\Z
  B$. So, $B$ is cointegral.
\end{proof}

\begin{cor}[characterisation of vanishing]\label{rem:char0}
  Let $C_*$ be a marked free $\Z$-chain complex, 
  let $k \in \N$, and let $\F$ be a subfield of~$\R$.
  Let $H_k(C_*)$ be finitely generated. 
  Then the following are equivalent:
  \begin{enumerate}
  \item\label{char0.1}
    We have $G_k^\F(C_*) = 0$.
  \item\label{char0.2}
    We have $L_k^\F(C_*) = 0$.
  \item\label{char0.3}
    We have~$H_k(C_*;\F) \cong_\F 0$.
  \end{enumerate}
\end{cor}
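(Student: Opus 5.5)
The plan is to combine the sandwich estimate of Proposition~\ref{prop:liftgnarl} with Example~\ref{exa:vanishing}. The sandwich estimate
\[ L_k^\F(C_*) \leq \tfrac12 \cdot G_k^\F(C_*) \leq \dim_\F H_k(C_*;\F) \cdot L_k^\F(C_*) \]
needs finite generation of~$H_k(C_*)$. This is exactly our hypothesis. It also guarantees that $\dim_\F H_k(C_*;\F)$ is finite. Indeed, $H_k(C_*;\F) \cong H_k(C_*) \otimes_\Z \F$ by the universal coefficient theorem, because $\F$ is flat over~$\Z$.

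\emph{\ref{char0.1}$\Leftrightarrow$\ref{char0.2}.} If $G_k^\F(C_*) = 0$, the left inequality gives $L_k^\F(C_*) \leq 0$, hence $L_k^\F(C_*) = 0$. Conversely, if $L_k^\F(C_*) = 0$, the right inequality gives $G_k^\F(C_*) \leq 2 \cdot \dim_\F H_k(C_*;\F) \cdot 0 = 0$. Here we use that the dimension is finite, so no product of the form $\infty \cdot 0$ occurs.

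\emph{\ref{char0.3}$\Rightarrow$\ref{char0.1}.} If $H_k(C_*;\F) \cong 0$, then the zero map is a chain extension, and the empty basis is cointegral. Hence $G_k^\F(C_*) \leq \|0\|^\emptyset = 0$. Alternatively, Example~\ref{exa:vanishing} applies, since gnarledness is a size invariant; but the direct argument avoids relying on results not yet proved.

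\emph{\ref{char0.2}$\Rightarrow$\ref{char0.3}.} This is the main step. Suppose $H_k(C_*;\F) \neq 0$; we show $L_k^\F(C_*) \geq \tfrac12$, or at least that it is positive.
\begin{enumerate}
\item Since $H_k(C_*)$ is finitely generated and $H_k(C_*;\F) \neq 0$, the group $H_k(C_*)$ has a free summand. So there is an integral class $\alpha \in H_k(C_*)$ with infinite order.
\item Choose $\psi \in H^k(C_*;\F)$ with $\langle \psi, \alpha \rangle = \tfrac12$, which is possible by Remark~\ref{rem:UCT}, and set $\varphi := \pi(\psi)$.
\item Let $\widetilde\varphi$ be any lift of~$\varphi$. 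By Remark~\ref{rem:UCT}, $\langle \widetilde\varphi, \alpha\rangle \in \tfrac12 + \Z$, so $|\langle \widetilde\varphi,\alpha\rangle| \geq \tfrac12$.
\item To turn this into a lower bound on $\|\widetilde\varphi\|_\infty$, use the duality estimate $|\langle \widetilde\varphi,\alpha\rangle| \leq \|\widetilde\varphi\|_\infty \cdot \|\alpha\|_1$. This gives $\|\widetilde\varphi\|_\infty \geq 1/(2\|\alpha\|_1)$.
\item The bound requires $\|\alpha\|_1$ to be finite, which holds because $\alpha$ is represented by a finite chain.
\end{enumerate}
Hence $L_k^\F(C_*) \geq 1/(2\|\alpha\|_1) > 0$, which is the contrapositive of \ref{char0.2}$\Rightarrow$\ref{char0.3}. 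The main point to watch is that the lower bound depends on the fixed class~$\alpha$, not on~$\widetilde\varphi$, so it survives taking the infimum over all lifts.
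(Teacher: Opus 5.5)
Your proposal is correct and follows essentially the paper's proof: the sandwich estimate gives 1$\Leftrightarrow$2, the vanishing direction is trivial, and for 2$\Rightarrow$3 you take a lift pairing into $\tfrac12+\Z$ with an integral class and bound it via $|f(c)|\le|f|_\infty\cdot|c|_1$. Choosing an infinite-order $\alpha$ and then $\psi$ with $\langle\psi,\alpha\rangle=\tfrac12$ is only a slightly more direct way to produce that class than the paper's detour through non-surjectivity of $H^k(C_*;\Z)\to H^k(C_*;\F)$ and rescaling.

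Two small points. First, the semi-norm $\|\alpha\|_1$ can be $0$, e.g.\ for singular chains. So either state the convention $1/0=\infty$ explicitly, or bound by $|c|_1$ for an integral cycle $c$ representing $\alpha$, which satisfies $1\le|c|_1<\infty$; this is what the paper does. Second, the aim $L_k^\F(C_*)\ge\tfrac12$ fails in general (Example~\ref{exa:longcircle} gives $L_1^\Q\le\tfrac1{2m}$), so only positivity, with a complex-dependent bound, is available.
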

\begin{proof}
  The equivalence $\ref{char0.1}. \Longleftrightarrow \ref{char0.2}.$
  follows from the sandwich estimate in Proposition~\ref{prop:liftgnarl}.

  The implication~$\ref{char0.3}. \Longrightarrow \ref{char0.2}.$ is
  immediate from the definition.

  For the implication~$\ref{char0.2}. \Longrightarrow \ref{char0.3}.$,
  we proceed as follows: Let $H_k(C_*;\F) \not \cong_\F 0$.
  We show that $L_k^\F(C_*) > 0$. 
  Because $H_k(C_*)$
  is finitely generated and $H_k(C_*;\F) \not\cong_\F 0$, 
  the canonical 
  map~$\Hom_\Z(H_k(C_*),\Z) \to \Hom_\Z(H_k(C_*),\F)$
  is not surjective. By the universal coefficient theorem
  (and divisibility of~$\F$), 
  therefore, the change-of-coefficients map~$H^k(C_*;\Z) \to H^k(C_*;\F)$
  is not surjective. 
  In view of the long exact coefficient sequence, 
  the reduction map~$\pi \colon H^k(C_*;\F) \to H^k(C_*;\F/\Z)$ is 
  non-trivial.

  Let $\varphi \in \im \pi$ be non-trivial. Then, by the universal
  coefficient theorem (and divisibility of~$\F/\Z$), there exists a
  class~$\alpha \in H_k(C_*)$ with~$\langle \varphi, \alpha\rangle
  \neq 0$ in~$\F/\Z$. Rescaling $\varphi$, we may assume that $\langle
  \varphi, \alpha\rangle$ is represented by~$1/2$.

  Let $c \in C_k$ be a cycle representing~$\alpha$.
  We show that $L_k^\F(C_*,\varphi) \geq 1/(2 \cdot |c|_1)$: 
  Every cocycle~$f \in \Hom_\Z (C_k,\F)$ representing a $\pi$-lift of~$\varphi$
  satisfies $\langle [f] , \alpha \rangle \in \Z + 1/2$ and so
  \[ |f|_\infty \cdot |c|_1
  \geq \bigl|f(c)\bigr|
  =  \bigl| \langle [f],\alpha \rangle \bigr|
  \geq \frac12.
  \]
  Hence, $L_k^\F(C_*) \geq L_k^\F(C_*,\varphi) \geq 1/(2 \cdot |c|_1) > 0$.
\end{proof}

%%%%%%%%%%%%%%%%%%%%%%%%%%%%%%%%%%%%%%%%%%%%%%%%%%
\subsection{Universality}\label{subsec:gnarluniversal}

Gnarledness is universal among size invariants in
the sense of Section~\ref{sec:size}.

\begin{prop}[universality of gnarledness]\label{prop:universality}
  Let $\F$  be a subfield of~$\R$ and let $k \in \N$.
  Then, $G_k^\F$ is universal among
  all size invariants over~$\F$ in degree~$k$:
  \begin{enumerate}
  \item Gnarledness~$G_k^\F$ is a size invariant over~$\F$ in degree~$k$.
  \item
    If $I \colon \mCh \to [0,\infty]$
    is a size invariant over~$\F$ in degree~$k$ and $C_*$ is a marked free
    $\Z$-chain complex, then
    \[ I(C_*) \leq G_k^\F(C_*).
    \]
    \end{enumerate}
\end{prop}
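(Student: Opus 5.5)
The second part is immediate from the chain extension estimate, so the real work is the first part. \emph{Part~2:} let $I$ be a size invariant over~$\F$ in degree~$k$ and $C_*$ a marked free $\Z$-chain complex. For every chain extension~$s$ of~$C_*$ in degree~$k$ over~$\F$ and every cointegral basis~$B$, Proposition~\ref{prop:sizechainext} gives $I(C_*) \leq \|s\|^B$. Taking the infimum over~$B$ and then over~$s$ gives $I(C_*) \leq G_k^\F(C_*)$. (Such chain extensions always exist since $\F$ is a field; cointegral bases exist when $H_k(C_*)_\F$ is finitely generated free. If $B_k^\F(C_*)$ is empty, the infimum is~$\infty$ and there is nothing to show.)

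\emph{Normalisation for~$G_k^\F$:} let $\partial_k = 0$ and $\partial_{k+1} = 0$. Then $C_k \otimes_\Z \F = \ker \partial_k$, so the only chain extension is the projection $s \colon C_k\otimes_\Z \F \to H_k(C_*;\F)$, which is an isomorphism. Take $B := s(\Sigma_k)$, the image of the marked basis. This is an $\F$-basis. It is cointegral because $H_k(C_*)_\F$ is the image of $H_k(C_*) = C_k$, which is exactly $\Z B$. Then $\|s\|^B = 1$ (or $0$ if $C_k \cong 0$), so $G_k^\F(C_*) \leq 1$.

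\emph{Monotonicity for~$G_k^\F$:} let $f_* \colon C_*\otimes_\Z\F \to D_*\otimes_\Z \F$ be as in Definition~\ref{def:size}. Fix a chain extension~$t$ of~$D_*$ and a cointegral basis~$B$ of~$H_k(D_*;\F)$. By Lemma~\ref{lem:chainextheq}, $s := H_k(f_*)^{-1}\circ t \circ f_k$ is a chain extension of~$C_*$. Set $B' := H_k(f_*)^{-1}(B)$, a basis of~$H_k(C_*;\F)$. It is cointegral by the integrality condition: if $\alpha \in H_k(C_*)_\F$, then $H_k(f_*)(\alpha) \in H_k(D_*)_\F \subset \Z B$, and applying $H_k(f_*)^{-1}$ gives $\alpha \in \Z B'$. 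Since $H_k(f_*)^{-1}$ is an isometry from $(H_k(D_*;\F),|\cdot|_1^B)$ to $(H_k(C_*;\F),|\cdot|_1^{B'})$, we get
\[ \|s\|^{B'} \leq \|t\|^B \cdot \|f_k\|. \]
Taking infima over $t$ and~$B$ yields $G_k^\F(C_*) \leq \|f_k\| \cdot G_k^\F(D_*)$.

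Two edge cases need a convention $0\cdot\infty$ (the paper's presumably handles them). If $\|f_k\| = 0$, then $f_k = 0$, so $H_k(f_*) = 0$ is an isomorphism only when both homologies vanish; the zero extension then gives $G_k^\F(C_*) = 0$, so the inequality holds. If $G_k^\F(D_*) = \infty$, there is nothing to prove.

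The main point needing care is the cointegrality transport in the monotonicity step: the integrality condition must point in the correct direction. It does, since we pull $B$ back along $H_k(f_*)^{-1}$ and the condition pushes integral classes of~$C_*$ forward into integral classes of~$D_*$.
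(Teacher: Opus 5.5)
Your proposal is correct and follows the paper's proof. Part~2 is exactly the chain extension estimate (Proposition~\ref{prop:sizechainext}) followed by taking infima. Your monotonicity argument, via the transported extension $H_k(f_*)^{-1}\circ t\circ f_k$ and the pulled-back cointegral basis $H_k(f_*)^{-1}(B)$, is the paper's proof of Proposition~\ref{prop:gnarlmono}; you additionally spell out normalisation and the degenerate cases ($\|f_k\|=0$, empty $B_k^\F(C_*)$), which the paper treats as clear.
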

\begin{proof}
  \emph{Ad~1.} 
  It is clear from the definition that gnarledness satisfies the
  normalisation property. We show monotonicity
  below (Proposition~\ref{prop:gnarlmono}).

  \emph{Ad~2.}
  This is immediate from the chain extension estimate
  for size invariants (Proposition~\ref{prop:sizechainext}) and
  the definition of gnarledness.
\end{proof}

\begin{prop}[monotonicity]\label{prop:gnarlmono}
  Let $C_*$ and $D_*$ be marked free $\Z$-chain complexes,
  let $k \in \N$, and let $\F$ be a subfield of~$\R$.  Let $f_* \colon
  C_* \otimes_\Z \F \to D_* \otimes_\Z \F$
  be an $\F$-chain map that induces an $\F$-isomorphism 
  $H_k(f_*) \colon H_k(C_*;\F) \to
  H_k(D_*;\F)$
  and that satisfies the integrality condition~$H_k(f_*)(H_k(C_*)_\F)
  \subset H_k(D_*)_\F$. Then, we have
  \[ G_k^\F(C_*) \leq \|f_k\| \cdot G_k^\F(D_*),
  \]
  where $\|f_k\|$ refers to the operator norm taken with respect
  to the $\ell^1$-norms associated with the marked bases on~$C_k$
  and~$D_k$, respectively.
\end{prop}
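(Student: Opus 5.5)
We transport chain extensions and bases backwards along~$f_*$, exactly as in Lemma~\ref{lem:chainextheq}. We may assume $G_k^\F(D_*) < \infty$. Let $t \colon D_k \otimes_\Z \F \to H_k(D_*;\F)$ be a chain extension of~$D_*$ in degree~$k$ over~$\F$, and let $B$ be a cointegral basis of~$H_k(D_*;\F)$. Lemma~\ref{lem:chainextheq} shows that
\[ s := H_k(f_*)^{-1} \circ t \circ f_k \colon C_k \otimes_\Z \F \to H_k(C_*;\F) \]
is a chain extension of~$C_*$ in degree~$k$ over~$\F$. Because $H_k(f_*)$ is an $\F$-isomorphism, the family $B' := H_k(f_*)^{-1}(B)$ is an $\F$-basis of~$H_k(C_*;\F)$.

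The key point is that $B'$ is cointegral, and this is where the integrality hypothesis enters. Let $\alpha \in H_k(C_*)_\F$. By assumption $H_k(f_*)(\alpha) \in H_k(D_*)_\F \subset \Z B$, so $H_k(f_*)(\alpha) = \sum_{\beta \in B} n_\beta \beta$ with integers~$n_\beta$. Applying $H_k(f_*)^{-1}$ gives $\alpha = \sum_\beta n_\beta H_k(f_*)^{-1}(\beta) \in \Z B'$. Hence $B' \in B_k^\F(C_*)$.

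Next, we estimate the norm. By construction, $H_k(f_*)^{-1}$ maps $B$-coordinates to $B'$-coordinates identically, so it is an isometry from $(H_k(D_*;\F), |\cdot|_1^B)$ to $(H_k(C_*;\F), |\cdot|_1^{B'})$. Therefore
\[ \|s\|^{B'} \leq \bigl\|H_k(f_*)^{-1}\bigr\| \cdot \|t\|^B \cdot \|f_k\| = \|t\|^B \cdot \|f_k\|. \]
This gives $G_k^\F(C_*) \leq G_k^\F(C_*,s) \leq \|s\|^{B'} \leq \|f_k\| \cdot \|t\|^B$. Taking the infimum over all cointegral bases~$B$ and all chain extensions~$t$ of~$D_*$ yields the claim.

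The main subtlety is the convention $0 \cdot \infty$: if $\|f_k\| = 0$ and $G_k^\F(D_*) = \infty$, the right-hand side needs interpreting. Apart from this, the only genuinely non-formal step is the cointegrality of~$B'$, which uses exactly the integrality condition. There is also a degenerate case: if $D_*$ admits no chain extension, the infimum defining $G_k^\F(D_*)$ is $\infty$ and the statement is trivial. This cannot actually happen over a field, but no argument is needed for it.
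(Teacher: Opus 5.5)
Your proposal is correct and matches the paper's proof essentially step by step. You transport the chain extension via Lemma~\ref{lem:chainextheq}, pull back the cointegral basis along $H_k(f_*)^{-1}$ (cointegrality coming from the integrality condition), note that $H_k(f_*)^{-1}$ is isometric for the respective $\ell^1$-norms, and take the infimum. Your remark on the $0 \cdot \infty$ convention is a harmless refinement that the paper does not spell out.
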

\begin{proof}
  This is a straightforward estimate, based on the observation
  in Lemma~\ref{lem:chainextheq}: 
  Let $s \colon D_k \otimes_\Z \F \to H_k(D_*;\F)$ be a chain
  extension of~$D_*$ in degree~$k$ over~$\F$ and let $B$ be
  a cointegral basis of~$H_k(D_*;\F)$.
  Then 
  \[ \widetilde s := H_k(f_*)^{-1} \circ s \circ f_k 
  \colon C_k \otimes_\Z \F \to H_k(C;\F)
  \]
  is a chain extension of~$C_*$ in degree~$k$ over~$\F$
  (Lemma~\ref{lem:chainextheq}). We consider the family
  $\widetilde B := H_k(f_*;\F)^{-1}(B).
  $ 
  Then $\widetilde B$ is an $\F$-basis of~$H_k(C_*;\F)$.
  The integrality condition and cointegrality of~$B$
  ensure that $\widetilde B$ is also cointegral:
  Indeed,
  \begin{align*}
    H_k(C_*)_\F
  & = H_k(f_*)^{-1} \circ H_k(f_*)\bigl(H_k(C_*)_\F\bigr)
    \subset H_k(f_*)^{-1} \bigl( H_k(D_*)_\F \bigr)
  \\
  & \subset H_k(f_*)^{-1} (\Z B)
  \subset \Z \bigl(H_k(f_*)^{-1}(B)\bigr)
  \\
  & = \Z \widetilde B.
  \end{align*}

  By construction of~$\widetilde B$, we have~$\|H_k(f_*)^{-1}\|^{B, \widetilde B} \leq 1$.
  Therefore, we obtain
  \begin{align*}
    G_k^\F(C_*)
  & \leq \|\widetilde s\|^{\widetilde B}
  = \bigl\| H_k(f_*)^{-1} \circ s \circ f_k
  \bigr\|^{\widetilde B}
  \\
  & \leq \|H_k(f_*)^{-1}\|^{B, \widetilde B} \cdot \|s\|^B \cdot \|f_k\|
  \leq 1 \cdot \|s\|^B \cdot \|f_k\|.
  \end{align*}
  Taking the infimum over~$B$ and~$s$ gives the claimed estimate.
\end{proof}

As a consequence, we obtain via the sandwich estimate
(Proposition~\ref{prop:liftgnarl}) that the cohomological lifting
volume is also (almost) universal.
Universality can also be used to compare gnarledness
over different coefficients (Corollary~\ref{cor:gnarlcoeffirrelevant}).

%%%%%%%%%%%%%%%%%
\subsection{Comparison with geometric gnarledness}\label{subsec:gnarlgeo}

For the comparison with the geometric notion of gnarledness,
we recall the original definition of gnarledness
of simplicial complexes~\cite{CDMW}:

\begin{defi}[gnarledness of simplicial chain complexes]
  Let $X$ be a finite simplicial complex and let $k \in \N$.
  \begin{itemize}
  \item A \emph{$k$-spanning tree} of~$X$ is a $k$-dimensional
    subcomplex~$T$ of~$X$ with the following properties:
    \begin{itemize}
    \item We have $T_{k-1} = X_{k-1}$.
    \item The map~$H_{k-1}(T;\Q) \to H_{k-1}(X;\Q)$
      induced by the inclusion is a $\Q$-isomorphism.
    \item We have~$H_k(T;\Q) \cong_\Q 0$.
    \end{itemize}
  \item Let $T$ be a $k$-spanning tree of~$X$.
    We write~$B_k^\Q(T)$ for the set of all $\Q$-bases~$B$ of~$H_k(X,T;\Q)$
    that satisfy for all~$\sigma \in X_k$ that 
    $[\sigma] \in \Z B.
    $ 
    For~$B \in B_k^\Q(T)$, we set
    \[ G_k(X,T,B) := \max_{\sigma \in X_k} \bigl| [\sigma] \bigr|_1^B.
    \]
  \item
    The \emph{gnarledness of~$X$} in degree~$k$ is defined as
    \[ G_k(X) := \min \bigl\{ G_k(X,T,B)
    \bigm| \text{$T$ is a $k$-spanning tree of~$X$, $B \in B_k^\Q(T)$}
    \bigr\}.
    \]
  \end{itemize}
\end{defi}

In the definition of~$G_k(X,T,B)$, because
of~$T_{k-1} = X_{k-1}$ indeed every simplex~$\sigma \in X_k$
represents a relative cycle for~$(X,T)$ and so represents a class
in~$H_k^\Delta(X,T;\Q)$.

\begin{prop}
  Let $X$ be a finite simplicial complex and let $k \in \N$. Then:
  \[ G_k^\Q \bigl( C_*^\Delta(X) \bigr)
  \leq G_k(X).
  \]
\end{prop}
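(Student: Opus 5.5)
Given a $k$-spanning tree $T$ of $X$ and $B \in B_k^\Q(T)$ realising $G_k(X) = G_k(X,T,B)$, I will construct a chain extension $s$ of $C_*^\Delta(X)$ in degree~$k$ over~$\Q$ together with a cointegral basis $B'$ of $H_k^\Delta(X;\Q)$ such that $\|s\|^{B'} \leq G_k(X,T,B)$. The definition of $G_k^\Q$ then gives the claim.

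The natural candidate is the composition
\[ C_k^\Delta(X;\Q) \longrightarrow C_k^\Delta(X,T;\Q) \longrightarrow H_k^\Delta(X,T;\Q) \longrightarrow H_k^\Delta(X;\Q), \]
where the first map is the projection. The second map is well defined on all of $C_k^\Delta(X,T;\Q)$ because $T_{k-1} = X_{k-1}$ makes every relative $k$-chain a relative cycle. For the third map I need an isomorphism $j \colon H_k^\Delta(X;\Q) \to H_k^\Delta(X,T;\Q)$, which I will get from the long exact sequence of the pair $(X,T)$:
\begin{itemize}
\item injectivity of $j$ follows from $H_k(T;\Q) \cong 0$;
\item surjectivity follows because $H_{k-1}(T;\Q) \to H_{k-1}(X;\Q)$ is injective, so the connecting map $H_k(X,T;\Q) \to H_{k-1}(T;\Q)$ vanishes.
\end{itemize}
Set $s := j^{-1} \circ (\text{projection to relative homology})$. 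If $c$ is a cycle in $C_k^\Delta(X;\Q)$, its relative class is $j([c])$, so $s(c) = [c]$ and $s$ is a chain extension.

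For the basis I take $B' := j^{-1}(B)$. By construction $|s(\sigma)|_1^{B'} = |[\sigma]|_1^B$ for every $\sigma \in X_k$, hence $\|s\|^{B'} = \max_\sigma |[\sigma]|_1^B = G_k(X,T,B)$.

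It remains to check that $B'$ is cointegral, which I expect to be the main point. Take an integral class $\alpha \in H_k^\Delta(X)_\Q$, represented by an integral cycle $c = \sum n_\sigma \sigma$ with $n_\sigma \in \Z$. Then
\[ j(\alpha) = \sum n_\sigma [\sigma] \in \Z B, \]
since each $[\sigma] \in \Z B$ by the definition of $B_k^\Q(T)$. Applying $j^{-1}$ gives $\alpha \in \Z B'$. Thus $B'$ is cointegral, and $G_k^\Q(C_*^\Delta(X)) \leq \|s\|^{B'} = G_k(X)$.
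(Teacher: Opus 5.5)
Your proposal is correct and follows the paper's proof essentially verbatim. The paper likewise sets $s := t \circ r$, where $t$ is the inverse of the isomorphism $H_k^\Delta(X;\Q) \to H_k^\Delta(X,T;\Q)$ obtained from the pair sequence, takes $\widetilde B := t(B)$ as the cointegral basis, and verifies the chain extension property, cointegrality (via integral representing cycles) and $\|s\|^{\widetilde B} = G_k(X,T,B)$ exactly as you do; the only cosmetic difference is that you fix a minimising pair $(T,B)$ at the start, whereas the paper works with an arbitrary pair and takes the minimum at the end.
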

\begin{proof}
  Let $T$ be a $k$-spanning tree of~$X$ and let $B \in B_k^\Q(T)$.
  We explain how to obtain a chain extension for~$C_*^\Delta(X)$
  from~$T$, which will then witness the claimed estimate.

  Because $T$ is a $k$-spanning tree of~$X$, the long exact
  pair sequence for~$(X,T)$ shows that the map
  \[ H_k^\Delta(X;\Q) \to H_k^\Delta(X,T;\Q)
  \]
  induced by the inclusion is a $\Q$-isomorphism. Let $t \colon
  H_k^\Delta(X,T;\Q) \to H_k^\Delta(X;\Q)$ be the inverse isomorphism
  and let~$r\colon C_k^\Delta(X;\Q)
  \to H_k^\Delta(X,T;\Q)$ 
  be the homomorphism induced by the map~$X_k \to H_k^\Delta(X,T;\Q)$
  given by~$\sigma \to [\sigma]$. We consider
  \[ s := t \circ r \colon C_k^\Delta(X;\Q) \to H_k^\Delta(X;\Q)
  \qand
  \widetilde B := t(B).
  \]
  We proceed in the following steps:
  \begin{enumerate}
  \item The map~$s$ is a chain extension of~$C_*^\Delta(X)$
    in degree~$k$ over~$\Q$, because:
    Let $c \in C_k^\Delta(X;\Q)$ be a cycle.
    Then $t([c]) = [c]$ (because $c$ is an actual cycle on~$X$)
    and so
    \[ s(c)
    = t \circ r (c)
    = t \bigl([c]\bigr)
    = [c].
    \]
  \item We have~$\widetilde B \in B_k^\Q(C_*^\Delta(X))$, because:
    By construction, $\widetilde B$ is a $\Q$-basis of~$H_k^\Delta(X;\Q)$. 
    To show the cointegrality condition, let $\alpha \in H_k^\Delta(X;\Z)_\Q$.
    Then $\alpha$ is represented by a cycle~$c \in C_k^\Delta(X) = \Z X_k$.
    Because $s$ is a chain extension, we obtain in~$H_k^\Delta(X;\Q)$:
    \[ \alpha
    = [c]
    = s(c)
    = t\bigl([c]\bigr)
    \in t(\Z B)
    \subset \Z t(B)
    = \Z \widetilde B
    \]
    Hence, $H_k^\Delta(X;\Z)_\Q \subset \Z \widetilde B$.
  \item We have
    \begin{align*}
      G_k^\Q\bigl(C_*^\Delta(X)\bigr)
    & \leq G_k^\Q\bigl(C_*^\Delta(X),s\bigr)
    \leq \|s\|^{\widetilde B}
    = \max_{\sigma \in X_k} \bigl|s(\sigma)\bigr|_1^{\widetilde B}
    = \max_{\sigma \in X_k} \bigl| [\sigma]\bigr|_1^B
    \\
    &
    = G_k(X,T,B).
    \end{align*}
  \end{enumerate}
  Finally, taking the minimum over all $k$-spanning trees~$T$
  of~$X$ and all~$B \in B_k^\Q(T)$, shows that 
  $G_k^\Q\bigl(C_*^\Delta(X)\bigr) \leq G_k(X)$.
\end{proof}

\begin{rem}
  Let $X$ be a finite simplicial complex, let $k \in \N$, and
  let $s$ be a chain extension of~$C_*^\Delta(X)$ in degree~$k$
  over~$\Q$.
  We may then define a $k$-dimensional subcomplex~$T$ of~$X$ by setting 
  \begin{align*}
    T_{k-1} & := X_{k-1}
    \\
    T_k & := \bigl\{ \sigma \in X_k \bigm| s(\sigma) = 0 \in H_k^\Delta(X;\Q)
    \bigr\}.
  \end{align*}
  By construction, the canonical map~$H_{k-1}^\Delta(T;\Q) \to H_{k-1}^\Delta(X;\Q)$
  is surjective and $H_k^\Delta(T;\Q) \to H_k^\Delta(X;\Q)$ is zero. However,
  in general, we cannot expect $T$ to be an actual $k$-spanning tree of~$X$.

  An example where $G_k^\Q(C_*^\Delta(X))$ and $G_k(X)$ do not coincide
  is given below (Example~\ref{exa:longcircle}), but
  for the purpose of cohomological filling estimates, we may work
  with the algebraic version~$G_k^\Q(C_*^\Delta(X))$ instead of~$G_k(X)$.

  Moreover, $G_k^\Q(C_*^\Delta(X))$ satisfies monotonicity
  (Proposition~\ref{prop:gnarlmono}); this applies, in particular,
  to simplicial maps that induce $\Q$-homology isomorphisms.
  In contrast, for the gnarledness of simplicial complexes,
  it is not clear whether such a monotonicity property holds.
\end{rem}

\begin{example}\label{exa:longcircle}
  Let $m \in \N_{\geq 3}$ and let $X$ be the simplicial complex
  consisting of $m$~edges chained together as a circle of length~$m$.
  Let $C_* := C_*^\Delta(X)$ be the associated simplicial chain complex.
  Then $C_0 \cong_\Z \Z^m$ and $C_1 \cong_\Z \Z^m$, where the
  isomorphisms map the vertices and the (positively oriented) edges,
  respectively, to the standard basis of~$\Z^m$.

  On the one hand, by definition, $G_1(X) \in \N$ and clearly
  $G_1(X) \neq 0$. In particular, $G_1(X) \geq 1$. 

  On the other hand, 
  \begin{align*}
    s \colon C_1 \otimes_\Z \Q
    & \longrightarrow H_1^\Delta(X;\Q)
    \\
    e_j \otimes 1
    & \longmapsto \frac1m \cdot [e_1 + \dots + e_m]
  \end{align*}
  is a chain extension of~$C_*$ in degree~$1$ over~$\Q$
  and $B := ([e_1+ \dots e_m])$ is a cointegral basis of~$H_1^\Delta(X;\Q)$.
  Therefore,
  \[ G_1^\Q (C_*)
  \leq \|s\|^B
  = \frac1m < 1 \leq G_1(X).
  \]
  Viewing this simplicial circle as an $m$-fold covering of
  the CW-structure of~$S^1$ with a single $1$-cell, this example
  is related to Example~\ref{exa:mapcylgnarlcovering}.
\end{example}

Geometric gnarledness is algorithmically computable (though it is not
clear whether this can be done efficiently). In contrast,
we do not know whether this is true for the algebraic gnarledness
or the cohomological lifting volume of marked $\Z$-chain complexes of finite type.

\begin{prop}\label{prop:geognarlalgo}
  There exists an algorithm that
  \begin{itemize}
  \item given a finite simplicial complex~$X$ and~$k \in \N$,
  \item computes~$G_k(X)$.
  \end{itemize}
\end{prop}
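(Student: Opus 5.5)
The plan is to reduce the computation of~$G_k(X)$ to a finite search over finitely many candidates, each of which is decidable by exact linear algebra over~$\Q$ and~$\Z$. Since $X$ is finite, $X$ has only finitely many $k$-dimensional subcomplexes~$T$ with~$T_{k-1} = X_{k-1}$. For each such~$T$, checking the spanning-tree conditions is decidable. We compute the ranks of the rational simplicial boundary matrices of~$T$ and~$X$; this gives $H_k(T;\Q) \cong 0$ and the dimensions of~$H_{k-1}$. We then compute the rank of the map $H_{k-1}(T;\Q)\to H_{k-1}(X;\Q)$ by Gaussian elimination over~$\Q$. So the algorithm first enumerates all $k$-spanning trees~$T$ of~$X$; there is at least one, for instance by greedily adding $k$-simplices while the rank of~$\partial_k$ increases.

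The second and main step is, for a fixed spanning tree~$T$, to compute
\[ G_k(X,T) := \inf_{B \in B_k^\Q(T)} \max_{\sigma \in X_k} \bigl|[\sigma]\bigr|_1^B \]
and to show that this infimum is a minimum. Set $V := H_k(X,T;\Q)$ and $n := \dim_\Q V$. Let $\Lambda \subset V$ be the subgroup generated by the finitely many classes~$[\sigma]$, $\sigma \in X_k$. This is a finitely generated torsion-free group, so it is a lattice of rank~$n$ (the classes $[\sigma]$ span $V$, since relative chains in degree~$k$ are spanned by the $\sigma$). We compute a $\Z$-basis of~$\Lambda$ via the Smith/Hermite normal form of the integer coordinate matrix of the~$[\sigma]$. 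The condition $[\sigma]\in \Z B$ for all~$\sigma$ says exactly that $\Lambda\subset \Z B$, i.e., $\Z B$ is a lattice containing~$\Lambda$. Moreover, the value $\max_\sigma |[\sigma]|_1^B$ is an integer, because all coordinates of~$[\sigma]$ with respect to~$B$ are integers.

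Hence each $G_k(X,T,B)$ lies in~$\N$, and the minimum exists. It remains to bound the search. The candidate value $G_k(X,T,B_0)$ for a $\Z$-basis~$B_0$ of~$\Lambda$ gives an upper bound~$N$. It suffices to decide, for each integer $g\le N$, whether some $B$ achieves value $\le g$. Such a $B$ corresponds to an integer $n\times n$ matrix~$M$ of full rank expressing the basis~$B_0$ of~$\Lambda$ in terms of~$B$. Its columns are determined by the integer coordinate vectors of the~$[\sigma]$ in~$B$, each of $\ell^1$-norm at most~$g$. These vectors range over a finite set, so there are only finitely many possible $n\times \#X_k$ integer coordinate matrices with column norms~$\le g$. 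For each such matrix~$P$, we check whether it is realised by some basis~$B$. This means there is an invertible $Q\in\GL_n(\Q)$ with $P = Q\cdot P_0$, where $P_0$ is the (rational) coordinate matrix of the~$[\sigma]$ in a fixed $\Q$-basis of~$V$. Realisability is a linear-algebra condition decidable over~$\Q$: the row space of~$P$ must equal that of~$P_0$, with $Q$ determined on the span.

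The algorithm outputs the least realised~$g$ over all spanning trees~$T$. The step I expect to need most care is this finite-search reduction: arguing that the $\ell^1$-norm condition forces integer coordinate vectors from a bounded finite set, and that realisability of a coordinate matrix is exactly solvability of $P = QP_0$ with $Q$ invertible. The latter is decided by checking that $P$ has rank~$n$ and that $P_0$ and~$P$ have the same kernel as maps on~$\Q^{X_k}$.
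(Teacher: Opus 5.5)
Your proposal is correct and takes essentially the paper's approach: enumerate the finitely many $k$-spanning trees via rational homology computations, and for each tree reduce the search over cointegral bases to a finite enumeration of integer coordinate data of bounded $\ell^1$-norm, using that all values $G_k(X,T,B)$ lie in $\N$. You also make explicit two points the paper leaves implicit, namely that a spanning tree exists and that the search terminates. The only difference is bookkeeping: the paper normalises so that a $\Q$-basis among the classes $[\sigma]$ becomes $e_1,\dots,e_d$ and enumerates integer matrices $A$ with $\det A \neq 0$ and bounded columns (Lemma~\ref{lem:basisalgo}), whereas you enumerate the full coordinate matrix $P$ and test realisability by $\rk P = n$ and $\ker P = \ker P_0$, which is equivalent.
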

\begin{proof}
  It suffices to prove the following two steps:
  \begin{enumerate}
  \item\label{geognarlalgo.i}
    There exists an algorithm that
    \begin{itemize}
    \item given a finite simplicial complex~$X$ and $k \in \N$,
    \item computes the set of all $k$-spanning trees of~$X$.
    \end{itemize}
  \item\label{geognarlalgo.ii}
    There exists an algorithm that
    \begin{itemize}
    \item given a finite simplicial complex~$X$, a $k\in \N$,
      a $k$-spanning tree~$T$ of~$X$, and $K \in \N$
    \item computes the (finite) set of all bases~$B$ in~$B_k^\Q(T)$
      with
      \[ \max_{\sigma \in X_k} \bigl|[\sigma]\bigr|_1^B \leq K.
      \]
    \end{itemize}
  \end{enumerate}
  Indeed, using these algorithms, we can compute~$G_k(X)$
  as the minimal number~$K$ such that there exists a $k$-spanning tree~$T$
  for which the set computed by the second algorithm is non-empty.

  \emph{Ad~\ref{geognarlalgo.i}.} 
  Because $X$ is finite, we can compute the finite set of all
  subcomplexes of~$X$. For each subcomplex~$T$ of~$X$, we can
  check algorithmically whether $T$ is a $k$-spanning tree of~$X$,
  as the simplicial homology functors over~$\Q$ on finite simplicial complexes 
  are algorithmically computable.

  \emph{Ad~\ref{geognarlalgo.ii}.}
  Given a finite simplicial complex~$X$, a $k \in \N$, a $k$-spanning tree~$T$
  of~$X$, and $K \in \N$, we know, by the universal coefficient theorem,
  that $\{[\sigma] \mid \sigma \in X_k\}$ is a $\Q$-generating set of~$H_k(X,T;\Q)$
  and thus contains a $\Q$-basis~$B$ of~$H_k(X,T;\Q)$.

  We apply Lemma~\ref{lem:basisalgo} below to
  the $\Q$-isomorphism~$f \colon \Q^{\dim_\Q H_k(X,T;\Q)} \to H_k(X,T;\Q)$
  induced by~$B$ and the subset~$\{f^{-1}([\sigma]) \mid \sigma \in X_k\}$
  (which is algorithmically computable). Finally, we translate the result
  back via~$f$. 
\end{proof}

\begin{lemma}\label{lem:basisalgo}
  There is an algorithm that
  \begin{itemize}
  \item given $d\in \Q$, a $\Q$-generating set~$\Sigma \subset \Q^d$
    with $\{e_1, \dots, e_d\}\subset \Sigma$, and $K\in \N$
  \item computes the set
    \[ \bigl\{ B \bigm|
    \text{$B$ is a basis of~$\Q^d$
      with $\Sigma \subset \Z B$
      and $\max_{\sigma \in \Sigma} |\sigma|_1^B \leq K$}
    \bigr\}.
    \]
  \end{itemize}
\end{lemma}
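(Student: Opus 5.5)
The plan is to show that the set in question is finite, to give an explicit finite search space containing it, and then to test each candidate by deciding two conditions: integrality ($\Sigma \subset \Z B$) and the norm bound. The constraints will be turned into bounds on the coordinates of the vectors of~$B$, and finiteness of~$\Sigma$ will be used. (If $\Sigma$ is infinite the search over~$\Sigma$ is not finite, so I would read the input~$\Sigma$ as a finite generating set, as in the application in Proposition~\ref{prop:geognarlalgo}. I also read ``$d \in \Q$'' as $d \in \N$.)

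\emph{Step 1: bounding the candidates.} Let $B = (b_1,\dots,b_d)$ be a basis with $\Sigma \subset \Z B$ and $\max_{\sigma\in\Sigma}|\sigma|_1^B \leq K$. Since $e_1,\dots,e_d \in \Sigma$, each~$e_j$ has coordinates with respect to~$B$ given by integers $a_{ij}$ with $\sum_i |a_{ij}| \leq K$. The matrix $A = (a_{ij})$ is the inverse of the matrix whose columns are the~$b_i$. Hence $A$ is an integer matrix with entries bounded by~$K$ in absolute value, and $\det A \neq 0$. Conversely, $B$ is determined by~$A$: it consists of the columns of~$A^{-1}$. So the candidate bases are in bijection with a subset of the finite set of invertible integer $d\times d$ matrices with column $\ell^1$-norms at most~$K$. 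This set can be enumerated, and for each such matrix the inverse can be computed exactly over~$\Q$. (Ordering of bases is a harmless issue; we may list ordered bases or quotient by permutations.)

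\emph{Step 2: checking the conditions.} For each candidate $A$, put $B :=$ the columns of $A^{-1}$. For every $\sigma \in \Sigma$, compute its coordinate vector with respect to~$B$, which is simply $A\sigma \in \Q^d$. Then check two things: that all entries of $A\sigma$ are integers, and that $|A\sigma|_1 \leq K$. Both checks are finite rational computations. Output exactly those~$B$ that pass. Correctness follows from Step~1, which shows that every element of the target set arises from such an~$A$, together with the fact that the checks are precisely the defining conditions.

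The only genuinely non-trivial point is Step~1, the a~priori finiteness. The key is the hypothesis $\{e_1,\dots,e_d\} \subset \Sigma$: it forces the inverse basis matrix to be integral and entrywise bounded. Without this hypothesis, bases could be arbitrarily small rational multiples and the search space would be infinite. The remaining steps are routine exact linear algebra over~$\Q$.
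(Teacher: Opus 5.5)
Your proposal is correct and takes essentially the same route as the paper. The hypothesis $\{e_1,\dots,e_d\}\subset\Sigma$ forces the inverse of the basis matrix to be an invertible integer matrix with column $\ell^1$-norms at most~$K$, which gives a finite search space. Integrality and the norm bound are then checked for the remaining $\sigma\in\Sigma$; the paper phrases this check as the existence of $\lambda\in\{-K,\dots,K\}^d$ with $\sigma=\sum_j\lambda_j A^{-1}_{*j}$ and $\sum_j|\lambda_j|\leq K$, which is equivalent to your direct computation of $A\sigma$. Your remarks that $d$ should lie in~$\N$ and that $\Sigma$ must be finite are apt: both are implicit in the paper and hold in the application in Proposition~\ref{prop:geognarlalgo}.
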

\begin{proof}
  For~$d\in \Q$ and a $\Q$-generating set~$\Sigma \subset \Q^d$
  with $\{e_1, \dots, e_d\}\subset \Sigma$, we have
  \begin{align*}
    & \; \{ B \mid 
    \text{$B$ is a basis of~$\Q^d$
      with $\Sigma \subset \Z B$}
    \}
    \\
    = & \;
    \biggl\{ (A_{*1}, \dots, A_{*d}) \biggm|
    A \in \GL_d(\Q),\
    \fa{\sigma \in \Sigma} \exi{\lambda_1, \dots, \lambda_d \in \Z}
    \sigma = \sum_{j=1}^d \lambda_j \cdot A_{*j}
    \biggr\}
    \\
    = & \;
    \biggl\{ (A_{*1}, \dots, A_{*d}) \biggm|
    A \in \GL_d(\Q),\
    A^{-1} \in \mat dd\Z,\ 
    \fa{\sigma \in \Sigma\setminus\{e_1,\dots,e_d\}} \exi{\lambda_1, \dots, \lambda_d \in \Z}
    \sigma = \sum_{j=1}^d \lambda_j \cdot A_{*j}
    \biggr\}
    \\
    = & \;
    \biggl\{ (A^{-1}_{*1}, \dots, A^{-1}_{*d}) \biggm|
    A \in \mat dd \Z,\ 
    \det A \neq 0,\ 
    \fa{\sigma \in \Sigma\setminus\{e_1,\dots,e_d\}} \exi{\lambda_1, \dots, \lambda_d \in \Z}
    \sigma = \sum_{j=1}^d \lambda_j \cdot A^{-1}_{*j}
    \biggr\}.
  \end{align*}
  Hence, given~$K \in \N$, we obtain
  \begin{align*}
    & \;
    \bigl\{ B \bigm|
    \text{$B$ is a basis of~$\Q^d$
      with $\Sigma \subset \Z B$
      and $\max_{\sigma \in \Sigma} |\sigma|_1^B \leq K$}
    \bigr\}
    \\
    = & \;
    \biggl\{ (A^{-1}_{*1}, \dots, A^{-1}_{*d}) \biggm|
    A \in \mat dd \Z,\ 
    \det A \neq 0,\
    \fa{j \in \{1,\dots,d\}} |A_{*j}|_1 \leq K,\
    \\
    & \; \phantom{\bigl\{ (A^{-1}_{*1}, \dots, A^{-1}_{*d}) \bigm|\;}
    \fa{\sigma \in \Sigma\setminus\{e_1,\dots,e_d\}} \exi{\lambda_1, \dots, \lambda_d \in \{-K,\dots,K\}}
    \sigma = \sum_{j=1}^d \lambda_j \cdot A^{-1}_{*j} \land \sum_{j=1}^d |\lambda_j| \leq K
    \biggr\},   
  \end{align*}
  which is algorithmically computable due to the a~priori bound on the~$A_{*j}$.
\end{proof}

%%%%%%%%%%%%%%%%%
\subsection{Comparison with simplicial volume}\label{subsec:gnarlsv}

We reformulate the duality principle for simplicial volume~\cite{vbc}
in terms of gnarledness of the singular chain complex:

\begin{prop}\label{prop:gnarlsingular}
  Let $C_*$ be a marked free $\Z$-chain complex and let $n \in \N$
  with~$H_n(C_*;\R) \cong_\R \R$. Let $\mu_\Z \in H_n(C_*)$
  be an element such that the induced class~$\mu_\R \in H_n(C_*;\R)$
  is a $\Z$-generator of~$H_n(C_*)_\R$. Then
  \[ G_n^\R(C_*)
  = \frac1{\|\mu_\R\|_1}.
  \]
  Here, $\|\cdot\|_1$ denotes the $\ell^1$-semi-norm on~$H_n(C_*;\R)$
  induced by the $\ell^1$-norm on the chain complex~$C_* \otimes_\Z \R$
  given by the marking of~$C_*$.
  Moreover, we use the convention~$1/0 = \infty$. 
\end{prop}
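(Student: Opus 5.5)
Since $H_n(C_*;\R)$ is one-dimensional, I will reduce both sides to quantities about a single linear functional and then invoke the duality principle for the $\ell^1$-semi-norm (Hahn--Banach). First, the cointegral bases are easy to list. Since $H_n(C_*)_\R = \Z\mu_\R$, a basis $B=(b)$ is cointegral exactly when $b = \mu_\R/\lambda$ for some $\lambda\in\Z\setminus\{0\}$. Second, chain extensions correspond to functionals. Any chain extension $s$ has the form $s(x)=\varphi(x)\cdot\mu_\R$ for a linear $\varphi\colon C_n\otimes_\Z\R\to\R$, and the chain extension property says that $\varphi(c)$ equals the coefficient of $[c]$ with respect to $\mu_\R$ for every cycle $c$. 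In particular, $\varphi$ vanishes on boundaries, so $\varphi\circ\partial_{n+1}=0$, and $\varphi(z)=1$ for every cycle $z$ representing $\mu_\R$. For $B=(\mu_\R/\lambda)$ we get $\|s\|^B = |\lambda|\cdot|\varphi|_\infty$, where the sup is over the marked basis. Hence $G_n^\R(C_*)=\inf_\varphi |\varphi|_\infty$, the infimum being taken over all such $\varphi$, and we may take $\lambda=\pm1$.

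For the lower bound, take any such $\varphi$ and any cycle $z$ representing $\mu_\R$. Then $1=\varphi(z)\le|\varphi|_\infty\cdot|z|_1$. Taking the infimum over $z$ gives $|\varphi|_\infty\ge 1/\|\mu_\R\|_1$. This also covers the case $\|\mu_\R\|_1=0$: there no $\varphi$ with finite norm can exist, so both sides equal $\infty$ under the convention $1/0=\infty$.

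For the upper bound, assume $\|\mu_\R\|_1>0$. I want a functional $\varphi$ with $|\varphi|_\infty\le 1/\|\mu_\R\|_1$ that satisfies the chain extension property. The plan has three steps. On the cycle space $Z_n$ of $C_*\otimes_\Z\R$, consider the semi-norm $z\mapsto\inf_{b\in B_n}|z+b|_1$, where $B_n$ denotes the boundaries; this is the pullback of the induced semi-norm on $H_n$. Define $\varphi_0$ on $Z_n$ by $\varphi_0(z) := t$, where $[z]=t\mu_\R$. Because $H_n(C_*;\R)$ is one-dimensional, $\|[z]\|_1 = |t|\cdot\|\mu_\R\|_1$, so $|\varphi_0(z)|\le \|[z]\|_1/\|\mu_\R\|_1\le |z|_1/\|\mu_\R\|_1$. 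Hahn--Banach then extends $\varphi_0$ to all of $C_n\otimes_\Z\R$ while keeping the bound $|\varphi(x)|\le |x|_1/\|\mu_\R\|_1$, and this bound means exactly $|\varphi|_\infty\le 1/\|\mu_\R\|_1$. By construction, $s:=\varphi\cdot\mu_\R$ is a chain extension, and we pair it with the basis $B=(\mu_\R)$.

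The main obstacle is making sure the Hahn--Banach step is legitimate when $C_n$ has infinite rank. The extension need not be continuous for anything beyond the $\ell^1$-norm, but that is all we need. Only linearity and the pointwise bound enter the definition of a chain extension and of $\|s\|^B$, so no further regularity is required, and the argument goes through.
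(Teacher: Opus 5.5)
Your proof is correct and follows the paper's route. You identify the cointegral bases as $(\mu_\R/\lambda)$ with $\lambda \in \Z\setminus\{0\}$ and the chain extensions with the cocycles dual to $\mu_\R$, which reduces $G_n^\R(C_*)$ to $\|\omega\|_\infty$ for the dual class $\omega$. The only difference is that the paper cites Gromov's duality principle $\|\mu_\R\|_1 = 1/\|\omega\|_\infty$, whereas you prove it inline via the estimate $1 = \varphi(z) \le |\varphi|_\infty \cdot |z|_1$ and Hahn--Banach; this makes the argument self-contained, including the degenerate case $\|\mu_\R\|_1 = 0$.
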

\begin{proof}
  Let $\omega \in H^n(C_*;\R)$ be the cohomology class dual
  to~$\mu_\R$.  By the duality principle~\cite{vbc} (which holds
  in this generality), we
  have~$\|\mu_\R\|_1 = 1/\|\omega\|_\infty$ (with~$1/\infty =
  0$). Therefore, it suffices to show that
  \[ G_n^\R(C_*) = \|\omega\|_\infty.
  \]

  We write~$E_n$ for the set of all chain extensions of~$C_*$
  over~$\R$ in degree~$n$.
  Then (when viewing~$\omega$ as set of all cocycles in~$\Hom_\Z(C_n,\R)$
  dual to~$\mu_\R$)
  \begin{align*}
    \Phi \colon 
    E_n & \longrightarrow \omega \\
    s & \longmapsto
    \bigl(c \mapsto \text{$\lambda$ with~$s(c) = \lambda \cdot \mu_\R$}\bigr)
    \\
    \Psi \colon
    \omega & \longrightarrow E_n \\
    f & \longmapsto \bigl(c \mapsto f(c) \cdot \mu_\R \bigr)
  \end{align*}
  are well-defined mutually inverse bijections. Indeed, for a chain
  extension~$s$, the chain extension condition ensures both that
  $\Phi(s)$ is a cocycle and that $\langle [\Phi(s)], \mu_\R \rangle =
  1$, and vice versa.

  If $B_0 := (\mu_\R)$ denotes the cointegral basis of~$H_n(C;\R)$
  given by~$\mu_\R$, then
  \[ \bigl| \Phi(s) \bigr|_\infty
  = \sup_{\sigma \in \Sigma_n} \bigl| \Phi(s)(\sigma)\bigr|
  = \sup_{\sigma \in \Sigma_n} \bigl|s(\sigma)\bigr|_1^{B_0}
  = \|s\|^{B_0}
  \]
  for all~$s \in E_n$; here, $\Sigma_n$ is the marked basis of~$C_n$.
  Hence,
  $\|\omega\|_\infty = \inf_{s \in E_n} \|s\|^{B_0}.
  $
  
  Finally, we show that other cointegral bases of~$H_n(C_*;\R)$ do
  not lead to better values: 
  Let $B \in B_n^\R(C_*)$, i.e., there is a~$b \in \Z \setminus \{0\}$
  with~$B = (1/b \cdot \mu_\R)$. Then, for all~$s \in E_n$, we obtain
  \[ \|s\|^B
  = |b| \cdot \|s\|^{B_0}
  \geq \|s\|^{B_0}.
  \]
  Taking the infimum over all~$B \in B_n^\R(C_*)$ shows that
  $\|\omega\|_\infty
  = G_n^\R(C_*)$.
\end{proof}

\begin{cor}\label{cor:gnarlsv}
  Let $M$ be an oriented closed connected $n$-manifold. Then
  \[ G_n^\R\bigl(C_*(M;\Z) \bigr)
  = \frac1 {\|M\|}.
  \]
  Here, $C_*(M;\Z)$ is the singular chain complex of~$M$, marked
  by the basis of singular simplices and $\|M\| := \|[M]_\R\|_1$
  denotes the simplicial volume of~$M$~\cite{vbc}.
\end{cor}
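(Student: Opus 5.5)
The plan is to apply Proposition~\ref{prop:gnarlsingular} to the singular chain complex $C_* := C_*(M;\Z)$, marked by the singular simplices, in degree~$n$, with $\mu_\Z := [M]_\Z$ the integral fundamental class. Then $\|\mu_\R\|_1 = \|[M]_\R\|_1 = \|M\|$ by definition, and the convention $1/0 = \infty$ covers the case $\|M\| = 0$. So the whole proof consists of checking the two hypotheses of Proposition~\ref{prop:gnarlsingular}.

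First, $H_n(C_*;\R) = H_n(M;\R) \cong_\R \R$, because $M$ is an oriented closed connected $n$-manifold; this is standard Poincaré duality, or the orientation theory of closed connected orientable manifolds.

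Second, $\mu_\R = [M]_\R$ must be a $\Z$-generator of the integral part $H_n(C_*)_\R$, i.e., of the image of $H_n(M;\Z) \to H_n(M;\R)$. We use that $H_n(M;\Z) \cong \Z$ is generated by $[M]_\Z$, since $M$ is closed, connected and oriented. Hence the image is $\Z \cdot [M]_\R$. Moreover $[M]_\R \neq 0$, because it generates $H_n(M;\R) \cong \R$; equivalently, the universal coefficient theorem gives $H_n(M;\R) \cong H_n(M;\Z) \otimes_\Z \R$, as $\R$ is flat over~$\Z$. So $[M]_\R$ generates the infinite cyclic group $H_n(C_*)_\R$.

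With both hypotheses in place, Proposition~\ref{prop:gnarlsingular} gives $G_n^\R(C_*(M;\Z)) = 1/\|[M]_\R\|_1 = 1/\|M\|$. The only point needing care is the integrality step: the generator must be exactly $[M]_\R$, not a multiple of it. This is exactly where $H_n(M;\Z) \cong \Z$ (orientability together with closedness) is used. We also note that Proposition~\ref{prop:gnarlsingular} imposes no finiteness condition on $C_*$, so the uncountable rank of the singular chain modules causes no problem.
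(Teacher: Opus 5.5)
Your proposal is correct and matches the paper's proof, which likewise just applies Proposition~\ref{prop:gnarlsingular} to $C_*(M;\Z)$ with $\mu_\Z = [M]_\Z$. Your check of the two hypotheses ($H_n(M;\R)\cong\R$, and $[M]_\R$ generating the integral part $H_n(M;\Z)_\R = \Z\cdot[M]_\R$) fills in details the paper leaves implicit.
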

\begin{proof}
  We apply Proposition~\ref{prop:gnarlsingular} to the singular chain
  complex~$C_*(M;\Z)$ and the integral fundamental class~$[M]_\Z \in
  H_n(M;\Z)$ of~$M$.
\end{proof}

\begin{example}
  Thus, results on the spectrum of simplicial
  volume~\cite{heuer_loeh_spectrum, heuer_loeh_trans} show:
  \begin{itemize}
  \item Each rational number appears as gnarledness of
    singular chain complexes.
  \item Gnarledness of singular chain complexes can
    assume transcendental values.
  \end{itemize}
\end{example}
  
\begin{example}\label{exa:fincovsv}
  Let $d \in \N_{>0}$ and let $\overline M \to M$ be a $d$-sheeted
  covering of oriented closed connected $n$-manifolds. Then
  \begin{align*}
    2 \cdot L_n^\R\bigl(C_*(\overline M;\Z)\bigr)
    = 
    G_n^\R\bigl(C_*(\overline M;\Z)\bigr)
    = \frac1d \cdot G_n^\R\bigl(C_*(M;\Z)\bigr)
    = \frac2d \cdot L_n^\R\bigl(C_*(M;\Z)\bigr).
  \end{align*}
  Indeed, for the inner equality,
  we apply Corollary~\ref{cor:gnarlsv}, which
  relates gnarledness to the simplicial volume, and the
  fact that 
  simplicial volume is multiplicative under finite
  coverings~\cite{vbc}. For the relation with the cohomological
  lifting volume, we use that the top-homology of
  oriented closed connected manifolds is one-dimensional
  and Proposition~\ref{prop:liftgnarl}.
\end{example}

\begin{cor}
  Let $(S,r)$ be a one-relator presentation, where $r$ is a non-trivial
  element of the commutator subgroup of~$F(S)$ and let $\Gamma$ be
  the corresponding one-relator group.
  If $X$ is a model for the classifying space~$B\Gamma$, then
  \[ G_1^\R\bigl(C_*(X;\Z)\bigr) = \frac1{\|\Gamma\|}.
  \]
  Here, $C_*(X;\Z)$ is the singular chain complex of~$X$, marked by
  the basis of singular simplices and $\|\Gamma\|$ denotes the
  simplicial volume of~$\Gamma$~\cite{heuer_loeh_onerel}.
\end{cor}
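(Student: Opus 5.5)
The plan is to deduce the statement directly from Proposition~\ref{prop:gnarlsingular}, in the same way as Corollary~\ref{cor:gnarlsv}, applied to the singular chain complex~$C_*(X;\Z)$ marked by singular simplices. Three things must hold: the relevant real homology of~$X$ is one-dimensional, there is an integral class~$\mu_\Z$ whose real image~$\mu_\R$ generates the integral part as a $\Z$-module, and $\|\Gamma\|$ is by definition the $\ell^1$-semi-norm~$\|\mu_\R\|_1$. Once these are checked, Proposition~\ref{prop:gnarlsingular} gives $G^\R(C_*(X;\Z)) = 1/\|\mu_\R\|_1 = 1/\|\Gamma\|$ in the relevant degree, with the convention~$1/0=\infty$. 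Independence of the model follows because singular homology, its integral part, and the $\ell^1$-semi-norm are homotopy invariants.

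First I would recall the homology of one-relator groups. Because $r$ is non-trivial and lies in~$[F(S),F(S)]$, the relator is not a proper power of a primitive element in the abelianisation. Lyndon's identity theorem (equivalently, asphericity of the presentation complex when $r$ is not a proper power) then gives $H_2(\Gamma;\Z)\cong \Z$, generated by the class of the $2$-cell. It also gives $H_1(\Gamma;\Z)\cong \Z^{S}$, since $r$ dies in the abelianisation. The real homology and the integral part are then read off from the universal coefficient theorem.

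Second I would match this with the definition of~$\|\Gamma\|$ in~\cite{heuer_loeh_onerel}. There, $\|\Gamma\|$ is the $\ell^1$-semi-norm of the integral generator~$\mu_\Z$ of the one-dimensional homology of~$\Gamma$ in question, mapped to~$H_*(X;\R)\cong H_*(\Gamma;\R)$. I would take $\mu_\Z$ to be exactly this class. It is a $\Z$-generator of the integral part, because the integral homology in that degree is infinite cyclic and torsion-free. A generator of~$\Z$ stays a generator of its image in~$\R$, so the generator hypothesis of Proposition~\ref{prop:gnarlsingular} holds.

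The main obstacle is the degree bookkeeping. Proposition~\ref{prop:gnarlsingular} needs the homology in the degree of the statement, degree~$1$, to be one-dimensional over~$\R$. However, $H_1(\Gamma;\R)\cong \R^{S}$, which is one-dimensional only when $\# S = 1$. For $\# S\geq 2$ the proposition does not apply in degree~$1$, while the class defining~$\|\Gamma\|$ lives in~$H_2$. So I would first check carefully the degree convention of~$\|\Gamma\|$ in~\cite{heuer_loeh_onerel}, and isolate the case in which $\dim_\R H_1(X;\R)=1$ as the case where the plan goes through verbatim. If the one-dimensional homology is only available in a different degree, the argument above establishes the identity in that degree and not in degree~$1$. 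In that situation the stated degree-$1$ formula is not supported by this argument, and I would flag this rather than claim it.
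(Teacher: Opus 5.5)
Your approach is the paper's: its proof is the one-line application of Proposition~\ref{prop:gnarlsingular} to $C_*(X;\Z)$ and the integral fundamental class of~$\Gamma$. You were right to question the degree. The paper's proof says ``the integral fundamental class of~$\Gamma$ in~$H_1(X;\Z)$'', but the class defining~$\|\Gamma\|$ in~\cite{heuer_loeh_onerel} generates $H_2(\Gamma;\Z)\cong\Z$. So the statement and proof should read $G_2^\R$ and $H_2(X;\Z)$. With that correction, your three checks are exactly what is needed and the argument is complete: one-dimensionality of $H_2(X;\R)$, the generator condition, and $\|\Gamma\|=\|\mu_\R\|_1$.

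Two of your hedges should be sharpened.

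\emph{The degree-$1$ formula is false in general, not merely unsupported.} Here $\#S\geq 2$, because $F(S)$ must be non-abelian to contain a non-trivial commutator, so $H^1(X;\R)\cong\R^S\neq 0$. For any degree-$1$ chain extension~$s$ and any basis~$B$, the $B$-coordinates of~$s$ are cocycles representing the dual basis of~$H^1(X;\R)$, and they are bounded by~$\|s\|^B$. But every non-zero class in~$H^1(X;\R)$ has infinite $\|\cdot\|_\infty$: a bounded real $1$-cocycle induces a bounded homomorphism $\pi_1(X)\to\R$, which must be trivial. Hence $G_1^\R(C_*(X;\Z))=\infty$. On the other side, a surface group of genus $g\geq 2$ has $\|\Gamma\|=4g-4>0$, so $1/\|\Gamma\|$ is finite and the degree-$1$ identity fails.

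\emph{Your fallback case $\dim_\R H_1(X;\R)=1$ is empty.} It would require $\#S=1$, which is excluded by the same observation. Even if it occurred, the degree-$1$ generator would not compute~$\|\Gamma\|$, since that invariant is defined in degree~$2$.
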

\begin{proof}
  We apply Proposition~\ref{prop:gnarlsingular} to the singular chain
  complex~$C_*(X;\Z)$ and the integral fundamental class
  of~$\Gamma$ in~$H_1(X;\Z)$.
\end{proof}

%%%%%%%%%%%%%%%%%%%%%%%%%%%%%%%%%%%%%%%%%%%%%%%%%%%%%%%%%%%%%%%%%%%%%
\section{Irrelevance of coefficients}\label{sec:irrelevance}

We show that the cohomological lifting volume and gnarledness
of chain complexes are independent of the chosen coefficient
subfield of~$\R$.
In both cases, we use basic facts on approximation of reals
through numbers in a subfield and on (co)integrality properties.
For gnarledness, we in addition use base-change properties of size
invariants and universality.

%%%%%%%%%%%%%%%%%%%%%%%%%%%%%%
\subsection{Cohomological lifting volume}

\begin{prop}
  Let $\F$ be a subfield of~$\R$, let $k \in \N$, and
  let $C_*$ be a marked free $\Z$-chain complex with $\rk_\Z C_k <\infty$.
  Then
  \[ L_k^\Q(C_*) = L_k^\F(C_*).
  \]
\end{prop}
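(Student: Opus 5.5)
The plan is to turn both lifting volumes into the supremum of a single Lipschitz function on~$\R^r$, once over the dense subset~$\Q^r$ and once over~$\F^r$. Since $\rk_\Z C_k<\infty$, let $\Sigma_k$ be the finite marked basis, so $\Hom_\Z(C_k,\F)\cong\F^{\Sigma_k}$ with the sup-norm. The space $Z^k(\F)$ of $k$-cocycles is the kernel of the integral matrix $\Hom_\Z(\partial_{k+1},\F)$. Hence it is an $\F$-subspace defined over~$\Q$, and $Z^k(\F)=Z^k(\Q)\otimes_\Q\F$ is dense in $Z^k(\R)$. Since $H_k(C_*)_\F$ is a quotient of the finitely generated group $\ker\partial_k\subset C_k$ and is torsion-free, it is free of finite rank~$r$. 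Its image in~$H_k(C_*;\F)$ does not depend on the subfield~$\F$. Choose a $\Z$-basis represented by integral cycles $c_1,\dots,c_r\in C_k$.

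\emph{Step 1 (reformulation).} Consider the linear map $T\colon Z^k(\R)\to\R^r$, $f\mapsto (f(c_1),\dots,f(c_r))$. I claim that $T$ is defined over~$\Q$ and that its restriction $T_\F\colon Z^k(\F)\to\F^r$ is surjective; the latter follows from the universal coefficient theorem (Remark~\ref{rem:UCT}), because $\Hom_\Z(H_k(C_*),\F)\to\Hom_\Z(\Z c_1\oplus\dots\oplus\Z c_r,\F)$ is surjective. By Remark~\ref{rem:UCT}, a class in $H^k(C_*;\F/\Z)$ lying in~$\im\pi$ is determined by the values $v=(v_i)_i\in\F^r$ of a lift on the $c_i$, modulo~$\Z^r$. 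Moreover, any cocycle $f\in Z^k(\F)$ with $T(f)\in v+\Z^r$ represents a $\pi$-lift. This gives
\[ L_k^\F(C_*)=\sup_{v\in\F^r}\ \inf_{w\in\Z^r} d_\F(v+w),
\qquad d_\F(u):=\inf\bigl\{|f|_\infty \bigm| f\in Z^k(\F),\ T(f)=u\bigr\}. \]
Here the infimum over representing cocycles replaces the induced semi-norm.

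\emph{Step 2 (density in the fibres).} Let $d_\R$ be defined analogously, as the quotient norm of $|\cdot|_\infty$ along~$T$. For $u\in\F^r$, the fibre $T_\F^{-1}(u)=f_0+\ker T_\F$ has a base point $f_0$ with $\F$-coefficients. Since $\ker T$ is defined over~$\Q$, this fibre is dense in the real fibre $T^{-1}(u)$. Hence $d_\F(u)=d_\R(u)$, and likewise $d_\Q=d_\R$ on~$\Q^r$.

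\emph{Step 3 (continuity and conclusion).} The function $d_\R$ is finite, subadditive and homogeneous, hence bounded by $C\cdot|\cdot|_1$ for some constant~$C$. It follows that $|d_\R(u)-d_\R(u')|\le C|u-u'|_1$. Therefore $g(v):=\inf_{w\in\Z^r}d_\R(v+w)$ is $C$-Lipschitz on~$\R^r$. Since $\Q^r\subset\F^r\subset\R^r$ with $\Q^r$ dense, we obtain
\[ L_k^\Q(C_*)=\sup_{\Q^r} g=\sup_{\R^r} g=\sup_{\F^r}g=L_k^\F(C_*). \]

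The main obstacle is Step~1: the precise identification of $\im\pi$ and of the fibres of~$\pi$ with $\F^r$ modulo~$\Z^r$ via the integral cycles~$c_i$. One has to check that torsion in $H_k(C_*)$ causes no trouble; it does not, because $\F$-valued cocycles vanish on torsion classes. One also has to check that the condition ``$\pi$-lift'' is exactly $T(f)\in v+\Z^r$; this uses that $\Hom_\Z(H_k(C_*),\F/\Z)$ detects classes (Remark~\ref{rem:UCT}). Everything afterwards is density and Lipschitz continuity.
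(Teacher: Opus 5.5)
Your proof is correct. It uses the same two approximation ingredients as the paper's proof, but organises them differently.

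The paper proves the two inequalities separately and directly in cohomology.
\begin{itemize}
\item For $L_k^\Q(C_*) \leq L_k^\F(C_*)$, it corrects an $\F$-valued cocycle $g$ representing a lift by an $\F$-coboundary $\delta h$ to a rational cocycle. It then replaces $h$ by a nearby rational $h_\Q$, at cost $\|\partial_k\| \cdot \varepsilon$. This is your fibrewise density from Step~2, run directly between $\Q$ and $\F$ rather than through $\R$.
\item For $L_k^\F(C_*) \leq L_k^\Q(C_*)$, it expands an $\F$-class in the dual basis $(\widetilde\psi_\beta)_\beta$ of a cointegral basis, rounds the coefficients to rationals, and perturbs back, at cost $\# B \cdot \varepsilon \cdot K$ with $K = \max_\beta \|\widetilde\psi_\beta\|_\infty$. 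This is your Lipschitz estimate from Step~3. For the basis $([c_1],\dots,[c_r])$, the constant $K$ is exactly your $\max_i d_\R(e_i)$.
\end{itemize}

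Your coordinatisation via the integral cycles $c_1,\dots,c_r$ gives a single global picture. It identifies $\im\pi$ with $\F^r/\Z^r$. It exhibits $L_k^\F(C_*)$ as the supremum over $\F^r$ of one $\Z^r$-periodic Lipschitz function that does not depend on $\F$; this is the covering radius of $\Z^r$ with respect to the norm $d_\R$. As a result:
\begin{itemize}
\item you get $L_k^\F(C_*) = L_k^\R(C_*)$ for all subfields at once;
\item the paper's unjustified step, that an $\F$-lift of a class coming from $H^k(C_*;\Q)$ is cohomologous to a rational cocycle, is absorbed into the surjectivity of $T_\Q$ (a rational base point in each fibre).
\end{itemize}
The paper's route in turn avoids the explicit identification of $\im\pi$ and the torsion bookkeeping. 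It stays closer to the viewpoint of invariance of filling norms under dense inclusions of coefficients.
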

\begin{proof}
  This can be viewed as a version of invariance of filling volumes
  under inclusions of dense subcomplexes. We carry out the argument in
  this concrete situation.
  We consider the commutative diagram
  \[
  \begin{tikzcd}
    H^k(C_*;\Q)
    \ar[r]
    \ar[d,"\pi",description]
    & H^k(C_*;\F)
    \ar[d,"\pi_\F",description]
    \\
    H^k(C_*;\Q/\Z)
    \ar[r]
    & H^k(C_*;\F/\Z)
  \end{tikzcd}
  \]
  induced by the respective inclusions and projections on the coefficients.
  
  We have $L_k^\Q(C_*) \leq L_k^\F(C_*)$: 
  Let $\varphi \in H^k(C_*;\Q/\Z)$ be in the image of~$\pi$. It suffices to show that
  $L_k^\Q(C_*,\varphi) \leq L_k^\F(C_*)$.
  Let $\psi\in H^k(C_*;\F/\Z)$ be the image of~$\varphi$
  under coefficient change 
  and let $\widetilde \psi \in H^k(C_*;\F)$ be a $\pi_\F$-lift of~$\psi$.
  
  Let $g \in \Hom_\Z(C_k,\F)$ be a cocycle representing~$\widetilde \psi$. 
  Then, there exists a cochain~$h \in \Hom_\Z(C_{k-1},\F)$ such that
  $g + \delta h$ is rational and $[g + \delta h]$ is a $\pi$-lift of~$\varphi$. 
  Given~$\varepsilon \in \R_{>0}$, density of~$\Q$ in~$\F$ allows us to
  find a cochain~$h_\Q \in \Hom_\Z(C_{k-1},\Q)$ with~$|h_\Q - h| <\varepsilon$.
  We consider
  \[ f := g + \delta h - \delta h_\Q.
  \]
  By construction, $f$ is a rational cocycle with 
  $\pi\bigl([f]\bigr)
  = \varphi
  $
  and 
  \[ |f|_\infty \leq |g|_\infty + \|\delta^{k-1}\| \cdot |h - h_\Q|_\infty
  \leq |g|_\infty + \|\partial_k\| \cdot \varepsilon; 
  \]
  we know that $\|\partial_k\|$ is finite because $\rk_\Z C_k < \infty$. 
  Taking $\varepsilon \to 0$ and the infimum over all~$\widetilde \psi$
  (and~$g$, respectively) shows
  $L_k^\Q(C_*,\varphi) \leq L_k^\F(C_*,\psi) \leq L_k^\F(C_*)$. 
  
  Conversely, we have $L_k^\F(C_*) \leq L_k^\Q(C_*)$:
  Let $\psi \in H^k(C_*;\F/\Z)$ be in the image of~$\pi_\F$,
  say $\psi = \pi_\F(\widetilde \psi)$ with $\widetilde \psi
  \in H^k(C_*;\F)$.
  It suffices to show that $L_k^\F(C_*,\psi) \leq L_k^\Q(C_*)$.

  To this end, we first construct a class~$\widetilde \varphi$
  in~$H^k(C_*;\Q)$ ``sufficiently close to~$\widetilde \psi$''.
  Let $B$ be a $\Q$-basis of~$H_k(C_*;\Q)$
  induced by a $\Z$-basis of~$H_k(C_*)_\Q$.
  %and let $B_\F$
  %be the corresponding $\F$-basis of~$H_k(C_*;\F)$.
  Let $(\widetilde \varphi_\beta)_{\beta \in B}$ be the dual $\Q$-basis
  of~$H^k(C_*;\Q)$ and let $(\widetilde \psi_\beta)_{\beta \in B}$
  be the induced family in~$H^k(C_*;\F)$.
  Because $\rk_\Z C_k$ is finite,
  \[ K := \max_{\beta \in B} \|\widetilde \psi_\beta\|_\infty
  \]
  is finite. 
  By the universal coefficient theorem, $(\widetilde \psi_\beta)_{\beta \in B}$
  is an $\F$-basis of~$H^k(C_*;\F)$, namely the basis dual to the $\F$-basis
  of~$H_k(C_*;\F)$ induced by~$B$.
  We write $\widetilde \psi = \sum_{\beta \in B} \lambda_\beta \cdot \widetilde \psi_\beta$
  with~$\lambda_\beta \in \F$. 
  Given $\varepsilon \in \R_{>0}$, for all~$\beta \in B$,
  there exist~$\mu_\beta \in \Q$ with
  \[ |\mu_\beta - \lambda_\beta| < \varepsilon.
  \]
  We consider the class
  \[ \varphi
  := \pi \biggl(\sum_{\beta \in B} \mu_\beta \cdot \widetilde \varphi_\beta \biggr)
  \in H^k(C_*;\Q/\Z).
  \]

  Let $\widetilde \varphi \in H^k(C_*;\Q)$ be any $\pi$-lift of~$\varphi$
  and let $f \in \Hom_\Z(C_k,\Q)$ be a cocycle representing~$\widetilde \varphi$. 
  We set
  \[ \widetilde\varrho
  := [f]
  - \sum_{\beta \in B} (\mu_\beta - \lambda_\beta) \cdot \widetilde \psi_\beta
  \in H^k(C_*;\F).
  \]
  By construction, $\widetilde \varrho$ is a $\pi_\F$-lift of~$\psi$, because
  \begin{align*}
    \pi_\F(\widetilde \varrho)
    & = \pi_\F\bigl([f]\bigr)
    - \pi_\F\biggl(\sum_{\beta \in B} (\mu_\beta - \lambda_\beta) \cdot \widetilde \psi_\beta \biggr)
    \\
    & = \pi_\F\biggl(\sum_{\beta \in B} \mu_\beta \cdot \widetilde \psi_\beta \biggr)
    - \pi_\F\biggl(\sum_{\beta \in B} (\mu_\beta - \lambda_\beta) \cdot \widetilde \psi_\beta \biggr)
    = \pi_\F\biggl(\sum_{\beta \in B} \lambda_\beta \cdot \widetilde\psi_\beta\biggr)
    \\
    & = \pi_\F(\widetilde \psi) = \psi
    ,
  \end{align*}
  and we have  
  \begin{align*}
    \|\widetilde \varrho\|_\infty
    & \leq |f|_\infty
    + \sum_{\beta \in B} |\mu_\beta - \lambda_\beta| \cdot \|\widetilde \psi_\beta\|_\infty
    \leq |f|_\infty + \# B \cdot \varepsilon \cdot K.
  \end{align*}
  Taking $\varepsilon \to 0$ and the infimum over all~$\widetilde \varphi$
  (and $f$, respectively) shows $L_k^\F(C_*,\psi) \leq L_k^\Q(C_*,\varphi)
  \leq L_k^\Q(C_*)$.
\end{proof}

%%%%%%%%%%%%%%%%%%%%%%%%%%%%%%
\subsection{Size invariants}

Every size invariant over a subfield~$\F$ is also a size
invariant over~$\Q$. Conversely, every size invariant over~$\Q$
also satisfies monotonicity over~$\F$ on domain chain complexes
that satisfy a finiteness condition.

\begin{prop}[base change~I]\label{prop:sizebasechangeFQ}
  Let $\F$ be a subfield of~$\R$, let $k \in \N$,
  and let $I \colon \mCh \to [0,\infty]$ be a
  size invariant over~$\F$ in degree~$k$. Then
  $I$ is also a size invariant over~$\Q$ in degree~$k$.
\end{prop}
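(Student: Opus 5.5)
We have to check the two axioms of Definition~\ref{def:size} with~$\Q$ in place of~$\F$. Normalisation does not refer to the coefficient field at all, so it transfers verbatim. The substance is monotonicity. Given marked free $\Z$-chain complexes~$C_*$ and~$D_*$ and a $\Q$-chain map $f_* \colon C_* \otimes_\Z \Q \to D_* \otimes_\Z \Q$ with the stated properties, we want to produce an $\F$-chain map that satisfies the hypotheses of monotonicity over~$\F$ and has the same norm in degree~$k$. Monotonicity of~$I$ over~$\F$ then gives the inequality.

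The natural candidate is the base change $f^\F_* := f_* \otimes_\Q \id_\F$, using the identifications $C_* \otimes_\Z \F \cong (C_* \otimes_\Z \Q) \otimes_\Q \F$ and likewise for~$D_*$. This is an $\F$-chain map, and we verify three points in turn.
\begin{enumerate}
\item \emph{Homology isomorphism.} Because $\F$ is flat over~$\Q$, we have $H_k(C_*;\F) \cong H_k(C_*;\Q) \otimes_\Q \F$ naturally. Under this identification, $H_k(f^\F_*)$ corresponds to $H_k(f_*) \otimes_\Q \id_\F$. The latter is an $\F$-isomorphism because $H_k(f_*)$ is a $\Q$-isomorphism.
\item \emph{Integrality.} The integral part $H_k(C_*)_\F$ is the image of~$H_k(C_*)$, and this image factors through $H_k(C_*)_\Q \otimes 1$. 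The integrality condition over~$\Q$ says that $H_k(f_*)$ maps $H_k(C_*)_\Q$ into $H_k(D_*)_\Q$. Applying the change of coefficients $\Q \to \F$, which is natural, we conclude that $H_k(f^\F_*)$ maps $H_k(C_*)_\F$ into $H_k(D_*)_\F$.
\item \emph{Norm.} We need $\|f^\F_k\| = \|f_k\|$. The matrix of~$f^\F_k$ with respect to the marked bases equals the matrix of~$f_k$, and all its entries lie in~$\Q$. For $\ell^1$-norms, the operator norm is the supremum over the marked basis elements~$\sigma$ of $|f_k(\sigma)|_1$. This holds over any subfield of~$\R$: for $x = \sum \lambda_\sigma \sigma$ we get $|f(x)|_1 \le \sum |\lambda_\sigma|\,|f(\sigma)|_1$, and equality is attained at basis vectors. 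Since $f^\F_k(\sigma) = f_k(\sigma)$, the two norms coincide, even when the ranks are infinite.
\end{enumerate}

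With these three points, monotonicity over~$\F$ yields $I(C_*) \le \|f^\F_k\| \cdot I(D_*) = \|f_k\| \cdot I(D_*)$, as required. The only step requiring care is the first, namely the identification of homology after the flat base change $\Q \to \F$ and its compatibility with the integral parts. Since $\F$ is flat over the field~$\Q$, homology commutes with $\otimes_\Q \F$, so I expect no real obstacle there.
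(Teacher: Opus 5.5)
Your proposal is correct and follows the paper's proof: the paper also base-changes to $f_*^\F = f_* \otimes_\Q \F$ and applies monotonicity over~$\F$, using $\|f_k^\F\| \leq \|f_k\|$. You spell out checks that the paper leaves implicit, namely flat base change for the homology isomorphism, naturality of the coefficient change for integrality, and the basis-wise formula for the $\ell^1$-operator norm, which even gives equality of norms.
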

\begin{proof}
  We only need to check the monotonicity condition.
  If $f_* \colon C_* \otimes_\Z \Q \to D_*\otimes_\Z \Q$
  is a $\Q$-chain map inducing a $\Q$-isomorphism~$H_k(f_*)
  \colon H_k(C_*;\Q) \to H_k(D_*;\Q)$ satisfying~$H_k(f_*)
  (H_k(C_*)_\Q) \subset H_k(C_*)_\Q$, then (under
  the canonical natural isomorphism~$\args \otimes_\Z \F \cong
  \args \otimes_\Z \Q \otimes_\Q \F$) we may apply monotonicity
  over~$\F$ to 
  the base-changed
  chain map
  \[ f_*^\F := f_* \otimes_\Q \F\colon C_* \otimes_\Z \F
  \to D_* \otimes_\Z \F
  \]
  and obtain 
  $I(C_*)
  \leq \|f_k^\F\| \cdot I(D_*)
  \leq \|f_k\| \cdot I(D_*),
  $ 
  as desired.
\end{proof}

\begin{prop}[base change~II]\label{prop:sizebasechangeQF}
  Let $\F$ be a subfield of~$\R$, let $k \in \N$,
  and let $I \colon \mCh \to [0,\infty]$ be a
  size invariant over~$\Q$ in degree~$k$.
  Let $C_*$, $D_*$ in~$\mCh$ with~$\rk_\Z C_k < \infty$
  and let $f_* \colon C_* \otimes_\Z \F
  \to D_* \otimes_\Z \F$ be an $\F$-chain map that induces an 
  $\F$-iso\-morphism~$H_k(f_*) \colon H_k(C_*;\F) \to H_k(D_*;\F)$
  and that satisfies the integrality condition~$H_k(f_*) (H_k(C_*)_\F) \subset
  H_k(D_*)_\F$. Then: 
  \[ I(C_*) \leq \|f_k\| \cdot I(D_*).
  \]
\end{prop}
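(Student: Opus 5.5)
The plan is to reduce to monotonicity over~$\Q$ by turning $f_*$ into a $\Q$-chain map $g_* \colon C_* \otimes_\Z \Q \to D_* \otimes_\Z \Q$. The map $g_*$ should induce a $\Q$-isomorphism on $H_k$, satisfy the integrality condition, and have $\|g_k\| \leq (1+\varepsilon)\cdot \|f_k\|$. Applying monotonicity of~$I$ over~$\Q$ to~$g_*$ then gives $I(C_*) \leq (1+\varepsilon)\cdot\|f_k\|\cdot I(D_*)$, and letting $\varepsilon \to 0$ proves the claim. The degenerate cases ($I(D_*) = \infty$, or $\|f_k\| = 0$ with the convention $0\cdot\infty$) will be handled separately and directly.

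\emph{Construction of~$g_*$.} Let $p \colon \F \to \Q$ be any $\Q$-linear map with $p(1) = 1$. Since the boundary operators of $C_*$ and $D_*$ are integral, $\id_{D_*} \otimes_\Z p \colon D_* \otimes_\Z \F \to D_* \otimes_\Z \Q$ is a chain map. We then set
\[ g_* := (\id_{D_*} \otimes_\Z p) \circ f_* \circ (\text{inclusion } C_* \otimes_\Z \Q \to C_* \otimes_\Z \F), \]
which is a $\Q$-chain map in all degrees.

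\emph{Homological conditions.} If $x \in C_k$ is an integral cycle, then $f_k(x)$ represents an integral class $[y \otimes 1]$ with~$y \in D_k$ a cycle, by the integrality condition. Since $p(1) = 1$, we get $H_k(g_*)[x\otimes 1] = [y \otimes 1]$. Hence $H_k(g_*)$ maps $H_k(C_*)_\Q$ into~$H_k(D_*)_\Q$. Because $\rk_\Z C_k < \infty$, the space $H_k(C_*;\Q)$ is finite-dimensional and is spanned by a $\Z$-basis of~$H_k(C_*)_\Q$. The images of this basis are integral classes in~$H_k(D_*;\Q)$ whose images in $H_k(D_*;\F) \cong H_k(D_*;\Q)\otimes_\Q \F$ are $\F$-independent, because $H_k(f_*)$ is an isomorphism. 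So they are $\Q$-independent, and $H_k(g_*)$ is injective. Moreover,
\[ \dim_\Q H_k(D_*;\Q) = \dim_\F H_k(D_*;\F) = \dim_\F H_k(C_*;\F) = \dim_\Q H_k(C_*;\Q) < \infty, \]
so $H_k(g_*)$ is a $\Q$-isomorphism.

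\emph{Norm control.} This is the main obstacle: $p$ cannot be chosen bounded with respect to the absolute value on all of~$\F$. The fix is that only finitely many coefficients matter. The marked basis~$\Sigma_k$ of~$C_k$ is finite, and each $f_k(\sigma)$ has finitely many nonzero coefficients, so the set $S \subset \F$ of all coefficients occurring is finite. Let $V$ be the $\Q$-span of $S \cup \{1\}$, and pick a $\Q$-basis $v_0 = 1, v_1, \dots, v_m$ of~$V$. We define $p$ on~$V$ by $p(v_0) = 1$ and $p(v_i) = q_i \in \Q$, and extend it $\Q$-linearly to~$\F$ arbitrarily. Each $a \in S$ has fixed rational coordinates $a = \sum_i c_i v_i$, so $p(a) = c_0 + \sum_{i\geq1} c_i q_i \to a$ as $q_i \to v_i$. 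By density of~$\Q$ and finiteness of~$S$, we can choose the~$q_i$ such that $|p(a)| \leq (1+\varepsilon)|a|$ for all~$a \in S$. Then, for each $\sigma \in \Sigma_k$, we have $|g_k(\sigma)|_1 \leq (1+\varepsilon)\,|f_k(\sigma)|_1$. Since $\|g_k\|$ is the maximum of $|g_k(\sigma)|_1$ over the finitely many basis elements, this gives $\|g_k\| \leq (1+\varepsilon)\cdot\|f_k\|$, which completes the argument.
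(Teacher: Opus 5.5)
Your proof is correct, and it takes a genuinely different route from the paper.

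The paper derives the statement from its rational approximation lemma (Lemma~\ref{lem:chmapapprox}), which approximates $f_*$ itself, degree by degree:
\begin{itemize}
\item In degree~$k$, on a rational basis of the cycles it writes $f_k(c) = z + \partial b$ with $z$ a rational cycle and replaces this by $z + \partial b'$ with $b'$ rational and close to~$b$. On a rational complement of the cycles it uses nearby rational chains.
\item It then solves a rational mapping problem to define $g_{k-1}$.
\item It picks rational preimages under $\partial^D$ on a complement of $\im \partial^C_{k+1}$ to define $g_{k+1}$, and sets all other degrees to zero.
\end{itemize}
This yields a chain map defined over~$\Q$ with $H_k(g_*) = H_k(f_*)$ and the additive bound $\|g_k\| \le \|f_k\| + \varepsilon$.

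You instead push $f_*$ through a single $\Q$-linear coefficient map $p \colon \F \to \Q$ with $p(1) = 1$, tuned to be nearly the identity on the finitely many coefficients of the $f_k(\sigma)$, $\sigma \in \Sigma_k$. Integrality of the boundary operators makes $\id_{D_*} \otimes_\Z p$ a chain map in all degrees for free. The normalisation $p(1) = 1$ gives the integrality condition for $H_k(g_*)$ immediately, and the dimension count gives bijectivity. The multiplicative bound $\|g_k\| \le (1+\varepsilon)\cdot \|f_k\|$ is just as good after $\varepsilon \to 0$.

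What your route buys is brevity: it avoids the bookkeeping in degrees $k \pm 1$. The only extra ingredient is extending $p$ from the finite-dimensional $\Q$-subspace $V$ to all of~$\F$, which is harmless.

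In fact your trick also gives a simpler proof of the paper's approximation lemma. Your continuity argument lets you choose the $q_i$ so that $|p(a) - a| \le \varepsilon \cdot |a|$ on the coefficient set. Then the base-changed map $g_* \otimes_\Q \F$ satisfies $H_k(g_* \otimes_\Q \F) = H_k(f_*)$, because both agree on the integral classes, which span $H_k(C_*;\F)$. It also satisfies $\|g_k \otimes_\Q \F - f_k\| \le \varepsilon \cdot \|f_k\|$. The paper's formulation has the advantage of packaging this as a standalone statement about approximating $f_*$ itself.
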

\begin{proof}
  Given~$\varepsilon \in \R_{>0}$, we apply the approximation lemma
  (Lemma~\ref{lem:chmapapprox}) below and obtain a chain
  map~$g_* \colon C_* \otimes_\Z \Q \to D_* \otimes_\Z \Q$ such
  that $H_k(g_*)$ is a $\Q$-isomorphism with~$H_k(g_*) (H_k(C_*)_\Q)
  \subset H_k(D_*)_\Q$ and~$\|g_k\| \leq \|f_k\| + \varepsilon$.
  Therefore, monotonicity of~$I$ over~$\Q$ gives us
  \[ I(C_*)
  \leq \|g_k\| \cdot I(D_*)
  \leq \bigl(\|f_k\| + \varepsilon\bigr) \cdot I(D_*).
  \]
  Taking~$\varepsilon \to 0$ completes the proof.
\end{proof}

We may view the canonical maps~$C_* \to C_*\otimes_\Z \Q \to C_*
\otimes_\Z \F$ as inclusions of subcomplexes. Elements in~$C_*
\otimes_\Z \F$ coming from~$C_*\otimes_\Z \Q$ will be called
\emph{rational}. Similarly, maps obtained via base change from~$\Q$
to~$\F$ or from~$\Z$ to~$\F$ will be called \emph{defined over~$\Q$}
or \emph{defined over~$\Z$}, respectively.

\begin{lemma}[rational approximation of chain maps]\label{lem:chmapapprox}
  Let $\F$ be a subfield of~$\R$, let $k \in \N$,
  let $C_*$, $D_*$ be in~$\mCh$ with~$\rk_\Z C_k < \infty$,
  and let $f_* \colon C_* \otimes_\Z \F \to D_* \otimes_\Z \F$
  be an $\F$-chain map with~$H_k(f_*) (H_k(C_*)_\F) \subset H_k(D_*)_\F$. 
  Then, for each~$\varepsilon \in \R_{>0}$,
  there exists an $\F$-linear chain
  map~$g_* \colon C_*\otimes_\Z \F \to D_*\otimes_\Z \F$
  that is defined over~$\Q$ with
  \[ H_k(g_*) = H_k(f_*) \colon H_k(C_*;\F) \to H_k(D_*;\F) 
  \qand
  \|g_k\| \leq \|f_k\| + \varepsilon.
  \]
\end{lemma}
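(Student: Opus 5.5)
The plan is to build $g_*$ by rational approximation in degree~$k$ only and then to complete it to a chain map by pure linear algebra over~$\Q$ in the neighbouring degrees, setting $g_j := 0$ for $j \notin \{k-1,k,k+1\}$.

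\emph{Step~1: $H_k(f_*)$ is defined over~$\Q$.} Because $\rk_\Z C_k<\infty$, the rational cycle space $Z_k(C_*;\Q)$ has a finite basis $z_1,\dots,z_r$ consisting of integral cycles. Their classes span $H_k(C_*;\F)$ over~$\F$. By the integrality condition, each $H_k(f_*)[z_i]$ lies in $H_k(D_*)_\F$. Hence we may pick integral cycles $w_i\in Z_k(D_*)$ with $[w_i]=H_k(f_*)[z_i]$.

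\emph{Step~2: a finite rational linear system with an $\F$-solution.} Since $f_*$ is a chain map, there are $y_i\in D_{k+1}\otimes_\Z\F$ with
\[ f_k(z_i) - w_i = \partial y_i. \]
Let $\sigma_1,\dots,\sigma_n$ be the marked basis of~$C_k$. Let $S$ be a finite subset of the marked basis of~$D_k$ containing the supports of all $f_k(\sigma_j)$ and of all $w_i$. Let $T$ be a finite subset of the marked basis of~$D_{k+1}$ containing the supports of all~$y_i$. We now treat as unknowns the vectors $x_j\in \F S$ (the candidate values on~$\sigma_j$) and $u_i\in \F T$. The conditions are
\[ \sum_j \lambda_{ij}x_j - w_i = \partial u_i \qquad (i=1,\dots,r), \]
where $z_i=\sum_j\lambda_{ij}\sigma_j$ with $\lambda_{ij}\in\Z$. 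Note that $\partial(\F T)$ may leave $\F S$, so we enlarge $S$ by the supports of the $\partial\tau$, $\tau\in T$. This is a finite inhomogeneous linear system with rational coefficients, and it has the $\F$-solution $(f_k(\sigma_j),y_i)$. By the rank criterion it then has a rational solution. The rational solutions are (rational particular solution) $+\,\Q$-span of a rational kernel basis, and the $\F$-solution set is the $\F$-span of the same data. Since $\Q$ is dense in~$\F$, the rational solutions are therefore dense in the $\F$-solutions. So for $\delta>0$ we find a rational solution $(x_j,u_i)$ with $|x_j-f_k(\sigma_j)|_1<\delta$. Define $g_k(\sigma_j):=x_j$. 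Then $\|g_k\|=\max_j|x_j|_1\le\|f_k\|+\delta$. Moreover, $g_k$ maps each $z_i$ to a rational cycle homologous to~$w_i$, so $g_k(Z_k)\subset Z_k(D)$ and $g_k(B_k(C))\subset B_k(D)$ over~$\Q$, and $g_k$ induces $H_k(f_*)$ on~$H_k$.

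\emph{Step~3: completing to a chain map over~$\Q$.} In degree $k-1$, define $g_{k-1}$ on $B_{k-1}(C;\Q)$ by $g_{k-1}(\partial c):=\partial g_k(c)$. This is well defined because $g_k$ preserves cycles. Extend by zero on a $\Q$-complement, which exists since we are over a field, even in infinite rank. Its image consists of boundaries, so $\partial g_{k-1}=0=g_{k-2}\partial$. In degree $k+1$, let $g_{k+1}$ vanish on $Z_{k+1}(C;\Q)$ and on a complement be a linear lift of $g_k|\colon B_k(C;\Q)\to B_k(D;\Q)$ through $\partial\colon D_{k+1}\otimes\Q\to B_k(D;\Q)$; the domain is finite-dimensional, so such a lift exists. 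Then $\partial g_{k+1}=g_k\partial$ and $g_{k+1}\partial_{k+2}=0=\partial g_{k+2}$. Base changing to~$\F$ gives the required $g_*$.

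The main obstacle is Step~2: keeping the approximation consistent with the homological constraint while $D_*$ may have infinite rank. The reduction to finitely many coordinates $S,T$ and density of rational solutions of a rational linear system should handle it.
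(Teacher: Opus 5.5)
Your proof is correct. It differs from the paper's mainly in how $g_k$ is built.

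The paper works as follows:
\begin{itemize}
\item It splits $C_k\otimes_\Z\F = Z_k\oplus\overline Z_k$ by a rationally defined projection $p$.
\item On a rational basis of $Z_k$ it writes $f_k(c)=z+\partial b$ with $z$ a rational cycle, and replaces $b$ by a nearby rational $b'$.
\item On a rational basis of $\overline Z_k$ it approximates $f_k$ freely.
\item It then pays the change-of-basis constant $K=\|\id-p\|^{\overline B}+\|p\|^B$ and rescales $\varepsilon$.
\end{itemize}

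You instead put two things into one finite linear system with rational coefficients: the values of $g_k$ on the marked basis, and the homological constraint that each $g_k(z_i)$ differ from $w_i$ by a boundary $\partial u_i$. You then use that the rational solutions of such a system are dense in its $\F$-solutions. This approximates $f_k$ directly on the marked basis. So $\|g_k\|\le\|f_k\|+\delta$ follows with no auxiliary constant, and $H_k(g_*)=H_k(f_*)$ is built into the constraints. The price is the small density lemma for rational linear systems, plus the bookkeeping with the finite supports $S$, $T$ needed because $D_*$ may have infinite rank. The paper handles that point implicitly, since each $b$ is a finite chain.

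In degrees $k-1$ and $k+1$ your completion is essentially the paper's. Letting $g_{k+1}$ vanish on $Z_{k+1}$ rather than on the image of $\partial_{k+2}$ works equally well.

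One step deserves a line of justification: the inclusion $g_k(B_k(C;\Q))\subset B_k(D;\Q)$, which your degree-$(k+1)$ lift uses. It does not follow from $g_k$ preserving cycles alone. Take a rational boundary $b=\sum_i\mu_i z_i$. Then $[g_k(b)]=H_k(f_*)[b]=0$ in $H_k(D_*;\F)$. A rational cycle that is an $\F$-boundary is already a $\Q$-boundary, by injectivity of $H_k(D_*;\Q)\to H_k(D_*;\F)$ or again by the rank criterion on finite supports. The paper makes exactly this point explicitly in its degree-$(k+1)$ step.
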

\begin{proof}
  This is mainly a consequence of the fact that $C_*$ is defined
  over~$\Z$ and that $\Q$ is dense in~$\F$. 
  We construct~$g_*$ in degree~$k$, $k-1$ and lower degrees, $k+1$ and
  higher degrees, in this order.
  
  \emph{Degree~$k$:}
  Let $Z_k \subset C_k \otimes_\Z \F$ be the subspace
  of cycles. Because the boundary operator of~$C_*$ is defined over~$\Z$,
  we can choose an $\F$-basis~$B$ of~$Z_k$ consisting of rational chains.
  Let $p \colon C_k \otimes_\Z \F \to Z_k$ be an $\F$-linear projection that
  is defined over~$\Q$. We construct~$g_k$ separately on~$Z_k$
  and on~$\overline Z_k := \ker p$:
  \begin{itemize}
  \item On~$Z_k$: Let $c \in B$. By the integrality condition on~$H_k(f_*)$,
    the homology class~$[f_k(c)] \in H_k(D_*;\F)$ is rational. Thus, there exists
    a rational cycle~$z \in D_k \otimes_\Z \F$ and a chain~$b \in D_{k+1} \otimes_\Z \F$
    with
    \[ f_k(c) = z + \partial b. \]
    Because $\Q$ is dense in~$\F$, we may choose a rational chain~$b' \in D_{k+1} \otimes_\Z \F$
    with~$|\partial (b' - b)|_1 \leq \varepsilon$. We set
    $g_k(c) := z + \partial b'.
    $ 
    By construction, $g_k(c)$ is a rational cycle with 
    \[ \bigl[g_k(c)\bigr] = [z] = \bigl[f_k(c)\bigr]
    \qand
    \bigl| g_k(c) - f_k(c) \bigr|_1
    \leq \bigl| \partial (b' - b) \bigr|_1
    \leq \varepsilon 
    .
    \]
    We extend~$g_k$ linearly to~$Z_k$. Hence, $g_k|_{Z_k}$ is defined over~$\Q$.
  \item On~$\overline Z_k$:
    Let $\overline B$ be an $\F$-basis of~$\overline Z_k$ consisting
    of rational elements and let $c \in \overline B$. Because $\Q$
    is dense in~$\F$, there exists a rational chain~$g_k(c) \in D_k\otimes_\Z \F$
    with
    \[ \bigl| g_k(c) - f_k(c) \bigr|_1 \leq \varepsilon.
    \]
    We extend $g_k$ linearly to~$\overline Z_k$. Hence $g_k|_{\overline Z_k}$ is defined over~$\Q$.
  \end{itemize}
  As $C_k \otimes_\Z \F$ is the (internal) direct sum of the
  rationally defined subspaces~$Z_k$ and~$\overline Z_k$, we obtain a
  rationally defined $\F$-linear map~$g_k \colon C_k \otimes_\Z \F \to D_k
  \otimes_\Z \F$. Moreover, for all~$x \in C_k \otimes_\Z \F$, we have
  $p(x) \in Z_k$, $x-p(x) \in \overline Z_k$ and so 
  \begin{align*}
    \bigl| g_k(x) - f_k(x) \bigr|_1
    & 
    \leq \bigl| g_k(x- p(x)) - f_k(x-p(x))\bigr|_1
    + \bigl| g_k(p(x)) - f_k(p(x)) \bigr|_1
    \\
    &
    \leq \varepsilon \cdot \bigl| x- p(x)\bigr|_1^{\overline B}
    + \varepsilon \cdot \bigl| p(x) \bigr|_1^B
    \\
    &
    \leq \varepsilon \cdot \|\id - p \|^{\overline B} \cdot |x|_1
    + \varepsilon \cdot \|p\|^B \cdot |x|_1.
  \end{align*}
  We obtain~$\|g_k \| \leq \|f_k\| + K\cdot \varepsilon$, where
  $K := \|\id - p \|^{\overline B} + \|p\|^B$ 
  is finite because $\rk_\Z C_k < \infty$ and we take the operator
  norm with respect to~$|\cdot|_1$ on the domain. 
  
  \emph{Degree~$k-1$ and lower:}
  By construction of~$g_k$, we have $\ker \partial_k^C \subset
  \ker \partial_k^D \circ g_k$. Because the maps~$\partial_k^C$
  and $\partial_k^D \circ g_k$ are defined
  over~$\Q$, there exists a rationally defined $\F$-linear
  map~$g_{k-1} \colon C_{k-1} \otimes_\Z \F \to D_{k-1} \otimes_\Z \F$
  solving the mapping problem
  \[ g_{k-1} \circ \partial_k^C = \partial_k^D \circ g_k
  \qand
  \im g_{k-1} \subset \im \partial_k^D
  .
  \]
  In particular, $\partial_{k-1}^D \circ g_{k-1} = 0$ and we define
  $g_{k-2}, \dots, g_0$ to be the zero maps.
  
  \emph{Degree~$k+1$ and higher:}
  We construct an $\F$-linear map~$g_{k+1} \colon C_{k+1} \otimes_\Z \F
  \to D_{k+1} \otimes_\Z \F$ with~$\partial_{k+1} \circ g_{k+1} = g_k \circ \partial_{k+1}$
  and $g_{k+1} \circ \partial_{k+2} = 0$ 
  that is defined over~$\Q$; to this end, we argue as in the second part of
  the degree~$k$ case:
  We define $g_{k+1}$ as the zero map on~$\im \partial_{k+1}^C$.
  Because $\partial_{k+1}^C$ is defined over~$\Z$, the subspace~$\im \partial_{k+1}^C
  \subset C_{k+1} \otimes_\Z \F$
  is rationally defined. We choose a rationally defined complementary $\F$-subspace~$W
  \subset C_{k+1} \otimes_\Z \F$ and an $\F$-basis~$B$ of~$W$ consisting of
  rational chains. For~$c \in B$, the chain~$\partial c \in C_k \otimes_\Z\F$ is
  a cycle. By construction of~$g_k$, there exists a chain~$b \in D_{k+1} \otimes_\Z \F$
  with
  \[ g_k(\partial c) = f_k(\partial c) + \partial b
  = \partial \bigl( f_{k+1}(c) + b \bigr).
  \]
  Hence, the rational element~$g_k (\partial c)$ lies in the image
  of~$\partial_{k+1}^D$ on~$D_* \otimes_\Z \F$.
  Because the boundary operator of~$D_*$ is defined over~$\Z$, there
  also exists a rational solution~$d \in D_{k+1} \otimes_\Z \F$ with
  $g_k(\partial c) = \partial d.
  $ 
  We set~$g_{k+1}(c) := d$ and define~$g_{k+1}$ on all of~$W$
  through linear extension. By construction, $g_{k+1}$ is defined over~$\Q$ and
  $\partial \circ g_{k+1} = g_k \circ \partial$ as well as~$g_{k+1} \circ \partial_{k+2} = 0$.
  We define~$g_{k+2}, g_{k+3}, \dots$ to be the zero maps.
    
  By construction, $g_* \colon C_* \otimes_\Z \F \to D_* \otimes_\Z \F$
  is an $\F$-chain map that is defined over~$\Q$ and that satisfies
  $H_k(g_*) = H_k(f_*)$ as well as~$\|g_k\| \leq \|f_k\| + K \cdot \varepsilon$.
  Rescaling~$\varepsilon$ completes the proof. 
\end{proof}

%%%%%%%%%%%%%%%%%%%%%%%%%%%%%%
\subsection{Gnarledness}

The base change properties of size invariants
and universality show that gnarledness over~$\F$ and~$\Q$
coincide:

\begin{cor}\label{cor:gnarlcoeffirrelevant}
  Let $\F$ be a subfield of~$\R$ and let $k \in \N$.
  \begin{enumerate}
  \item If $C_*$ is in~$\mCh$, then~$G_k^\F(C_*) \leq G_k^\Q(C_*)$.
  \item If $C_*$ is in~$\mCh$ and~$\rk_\Z C_k < \infty$, then
    $G_k^\Q(C_*) \leq G_k^\F(C_*)$.
  \end{enumerate}
\end{cor}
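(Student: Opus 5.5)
Both parts follow from universality of gnarledness (Proposition~\ref{prop:universality}) combined with the two base change results (Propositions~\ref{prop:sizebasechangeFQ} and~\ref{prop:sizebasechangeQF}). The key observation is that a size invariant over one field, evaluated on~$C_*$, is bounded above by the gnarledness over that same field.

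For the first part, we view $G_k^\F$ as a map~$\mCh \to [0,\infty]$. By Proposition~\ref{prop:universality}, it is a size invariant over~$\F$ in degree~$k$. Base change~I (Proposition~\ref{prop:sizebasechangeFQ}) then shows that $G_k^\F$ is also a size invariant over~$\Q$ in degree~$k$. Applying the universality part of Proposition~\ref{prop:universality} over the field~$\Q$ gives $G_k^\F(C_*) \leq G_k^\Q(C_*)$ for every~$C_*$ in~$\mCh$. This part needs no finiteness assumption, in line with the statement.

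For the second part, we would like to run the same argument with the roles of~$\F$ and~$\Q$ exchanged. However, $G_k^\Q$ is only known to be a size invariant over~$\Q$. Base change~II (Proposition~\ref{prop:sizebasechangeQF}) supplies $\F$-monotonicity only for domain complexes of finite rank in degree~$k$. So we cannot quote universality over~$\F$ verbatim; instead we redo the chain extension estimate by hand. Let $s$ be a chain extension of~$C_*$ in degree~$k$ over~$\F$ and let $B$ be a cointegral $\F$-basis of~$H_k(C_*;\F)$. As in the proof of Proposition~\ref{prop:sizechainext}, we form the complex~$D_*$ concentrated in degree~$k$ with $D_k := \bigoplus_B \Z$, and view $s$ as an $\F$-chain map $f_* \colon C_* \otimes_\Z \F \to D_* \otimes_\Z \F$. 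This map is an $\F$-isomorphism on~$H_k$, satisfies the integrality condition by cointegrality of~$B$, and has $\|f_k\| = \|s\|^B$. The domain~$C_*$ satisfies $\rk_\Z C_k < \infty$, so base change~II applies to~$I := G_k^\Q$. Since $D_*$ has trivial boundary, normalisation gives $G_k^\Q(D_*) \leq 1$, and we obtain
\[ G_k^\Q(C_*) \leq \|s\|^B \cdot G_k^\Q(D_*) \leq \|s\|^B. \]
Taking the infimum over all~$B$ and all~$s$ yields $G_k^\Q(C_*) \leq G_k^\F(C_*)$.

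The only delicate point is the one already handled: the finiteness hypothesis must be imposed on the domain complex~$C_*$, not on~$D_*$. Base change~II requires exactly this, so it fits. Note also that $D_*$ may have infinite rank; this does not matter, because $B$ is finite whenever $\rk_\Z C_k < \infty$. All approximation work is hidden in Lemma~\ref{lem:chmapapprox}, so no further estimates are needed.
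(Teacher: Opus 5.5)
Your proposal is correct and follows the paper's proof. Part~1 is exactly base change~I followed by universality of $G_k^\Q$. In part~2 you write out explicitly the step the paper only summarises as ``the proof of universality hence shows'': the chain extension estimate for $G_k^\Q$, with the needed $\F$-monotonicity supplied by base change~II for the finite-rank domain $C_*$.
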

\begin{proof}
  We know that $G_k^\F$ and $G_k^\Q$ are size invariants in degree~$k$
  over~$\F$ and~$\Q$, respectively
  (Proposition~\ref{prop:universality}).
  
  \emph{Ad~1.} By base change, $G_k^\F$ is a size invariant
  over~$\Q$ in degree~$k$ (Proposition~\ref{prop:sizebasechangeFQ}).
  Therefore, universality of~$G_k^\Q$ over~$\Q$ gives~$G_k^\F \leq G_k^\Q$
  (Proposition~\ref{prop:universality})

  \emph{Ad~2.} Let $C_*$ in~$\mCh$ and~$\rk_\Z C_k < \infty$.
  Then $G_k^\Q$ satisfies monotonicity over~$\F$ in degree~$k$
  for the domain complex~$C_*$ (Proposition~\ref{prop:sizebasechangeQF}). 
  The proof of universality (Proposition~\ref{prop:universality})
  hence shows that $G_k^\F$ dominates~$G_k^\Q$ on~$C_*$, i.e.,
  $G_k^\Q (C_*) \leq G_k^\F(C_*)$.
\end{proof}

Alternatively, one can also show that base change gives
a bijection between the cointegral bases of $H_k(\args;\Q)$
and~$H_k(\args;\F)$, that chain extensions over~$\Q$ lead
to chain extensions over~$\F$, and that chain extensions over~$\F$
can be approximated by chain extensions over~$\Q$.

%%%%%%%%%%%%%%%%%%%%%%%%%%%%%%%%%%%%%%%%%%%%%%%%%%%%%%%%%%%%%%%%%%%%%
\section{Controlling the norm of products}
\label{sec:normedkuenneth}

We use chain extensions to construct bounded chain homotopy
equivalences from a marked free $\R$-chain complex $C_*$ to its
homology complex $H_*(C_*)$ with trivial boundary operators.  In this
context, boundedness refers to the semi-norm on $H_*(C_*)$ that is
induced by the $\ell^1$-norm on $C_*$.

We use these chain homotopy equivalences to prove that the algebraic Künneth
isomorphism is bilipschitz under a finiteness condition
(Theorem~\ref{thm:opnorm-Kunneth}).

We apply this result to the singular chain complex and singular
homology with the canonical marked basis induced by the singular
simplices.  Under a finiteness condition, we then obtain 
norm-control for the topological Künneth isomorphism
(Corollary~\ref{cor:Kuennethtopo}) and a controlled behaviour of
product maps (Corollary~\ref{cor:productofbilip}).  These results
allow us to classify elements with trivial norm in the singular
homology of products of hyperbolic manifolds
(Example~\ref{exa:Kuennethhyperbolic}).

%%%%%%%%%%%%%%%%%%%%%%%%%%%%%%%%%%%%%%%%%%%%%%%%%%%%%%%%%%%%%%%%%%%%%
\subsection{Null spaces and reduced homology}
\label{subsec:nullspaces}

In order to understand the semi-norm on homology of a normed chain
complex, it is important to look at the ``defect" of the semi-norm,
i.e., the subspace of elements with trivial norm.

\begin{defi}[null space]
  If $\|\cdot \|$ is a semi-norm on an $\F$-vector space~$X$,
  we call $\Null(X):=\lbrace x\in X \mid \|x\| =0\rbrace $ the \emph{null space}
  of the semi-norm~$\|\cdot \|$.

  If we want to specify the semi-norm of the space~$X$ explicitly, we
  write~$\Null_{\|\cdot\|}(X)$ instead of~$\Null(X)$.
\end{defi}

We use the following notation in the context of normed chain complexes.

\begin{notation}
  \label{nota:homologynullspace}
  If $C_\ast$ is a normed $\F$-chain complex with induced semi-norm on
  $H_*(C_*)$ and $n\in\N$, we write
  \begin{align*}
    \Null_n(C_\ast)
    & := \Null(H_n(C_\ast)),
    \\
    \overline{H}_n(C_\ast)
    &:= H_n(C_\ast)/ \Null_n(C_\ast).
  \end{align*}
  The latter is called the \emph{reduced homology of $C_*$}.
\end{notation}

\begin{rem}
  The term \emph{reduced homology} of a chain complex $C_*$ of
  topological vector spaces usually refers to~$\ker(\partial_n)/
  \overline{\im(\partial_{n+1})}$.  There is a canonical isometric
  isomorphism between $\overline{H}_n(C_*)$ and $\ker(\partial_n)/
  \overline{\im(\partial_{n+1})}$.
\end{rem}

For better readability, (semi-)norms on a chain complex will be
written with single bars like $|\cdot |$ and the induced semi-norm on
homology will have double bars like $\|\cdot\|$.

We collect basic results about semi-norms and null-spaces that will be
needed in the upcoming sections.

\begin{lemma}
  \label{lemma:seminormbecomesnorm}
  Let $X$ be a semi-normed $\F$-vector space. Then the induced
  quotient semi-norm on~$X/ \Null(X)$ is a norm.  Furthermore, the
  canonical projection~$X\to X/\Null(X)$ is isometric with respect to
  the given semi-norm on~$X$ and its induced norm on~$X/\Null(X)$.
\end{lemma}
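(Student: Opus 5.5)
The plan is to check directly that the quotient semi-norm on~$X/\Null(X)$ is well defined, that it is a norm, and that the projection is isometric. Write $p \colon X \to X/\Null(X)$ for the canonical projection. First, $\Null(X)$ is an $\F$-subspace of~$X$. If $\|x\| = \|y\| = 0$ and $\lambda \in \F$, then
\[ \|x+y\| \leq \|x\| + \|y\| = 0
\qand
\|\lambda x\| = |\lambda| \cdot \|x\| = 0.
\]
So the quotient vector space is well defined. Its quotient semi-norm is
\[ \bigl\|p(x)\bigr\| := \inf_{n \in \Null(X)} \|x + n\|,
\]
and the triangle inequality and homogeneity pass to infima in the standard way.

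The key step, which also gives isometry, is that the infimum is attained at every representative. For~$n \in \Null(X)$, the triangle inequality gives
\[ \|x+n\| \leq \|x\| + \|n\| = \|x\|
\qand
\|x\| \leq \|x+n\| + \|-n\| = \|x+n\|.
\]
Hence $\|x+n\| = \|x\|$, and therefore $\|p(x)\| = \|x\|$, i.e., $p$ is isometric.

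For positive definiteness, suppose $\|p(x)\| = 0$. By isometry $\|x\| = 0$, so $x \in \Null(X)$ and $p(x) = 0$. Thus the quotient semi-norm is a norm.

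I do not expect a real obstacle. The only subtlety is the extended range~$[0,\infty]$: if semi-norms may take the value~$\infty$, the estimates above still hold with the usual conventions. Nothing else, such as completeness, closedness of~$\Null(X)$, or topology, is needed.
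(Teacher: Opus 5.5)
Your proof is correct and takes essentially the same approach as the paper, which simply asserts that the lemma holds ``by definition of the induced quotient norm''. You spell out that definitional check, and your key observation that $\|x+n\| = \|x\|$ for all $n \in \Null(X)$ gives both the isometry and positive definiteness directly.
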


\begin{proof}
  This is true by definition of the induced quotient norm.
\end{proof}

\begin{example}[truncated $\ell^1$-semi-norm]
  Let $X$ be an $\F$-vector space and let $B$ be a basis of $X$. For a
  subfamily $B'$ of $B$ let
  \begin{align*}
    \|\cdot \|_1^{B,B'} \colon X &\to \R_{\geq 0} \\
    \sum_{b\in B} \lambda_b \cdot b &\mapsto \sum_{b\in B'}|\lambda_b|.
  \end{align*}
  Then, $\|\cdot \|_1^{B,B'}$ is a semi-norm on~$X$, called the
  \emph{truncated $\ell^1$-semi-norm on~$X$ associated to~$(B,B')$}.
  The null space of~$X$ with respect to~$\|\cdot \|_1^{B,B'}$ is the
  subspace of~$X$ spanned by $B\setminus B'$.
\end{example}

The equivalence of norms on finite dimensional $\R$-vector spaces
gives rise to an equivalence of semi-norms that have the same null
space.

\begin{lemma}
  \label{lemma:seminormequiv}
  Let $(X,\|\cdot \|)$ be a semi-normed $\R$-vector space such that
  $X/\Null(X)$ is finite dimensional. Then there is a basis~$B$ and a
  subfamily~$B'$ of~$B$ such that~$\|\cdot\|$ is equivalent to the
  truncated $\ell^1$-semi-norm on~$X$ associated to~$(B,B')$.
\end{lemma}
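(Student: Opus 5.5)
We reduce to the finite-dimensional normed space $Y := X/\Null(X)$ and then pull the comparison back to $X$. By Lemma~\ref{lemma:seminormbecomesnorm}, the quotient semi-norm on~$Y$ is a norm. The canonical projection $q \colon X \to Y$ satisfies $\|q(x)\| = \|x\|$ for all~$x \in X$. This is because $\|x+n\| \le \|x\|+\|n\| = \|x\|$ for every~$n \in \Null(X)$, and symmetrically $\|x\| \le \|x+n\|$, so the semi-norm is constant on cosets of~$\Null(X)$.

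The basis is built from two pieces. Choose an $\R$-basis $(n_j)_{j \in J}$ of the subspace~$\Null(X)$, which may be infinite. Since $Y$ is finite dimensional, say of dimension~$d$, choose $b_1,\dots,b_d \in X$ whose images $q(b_1),\dots,q(b_d)$ form a basis of~$Y$. Set $B' := (b_1,\dots,b_d)$ and $B := B' \cup (n_j)_{j\in J}$.

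We check that $B$ is a basis of~$X$ using the short exact sequence $0 \to \Null(X) \to X \to Y \to 0$. For spanning, take $x \in X$ and write $q(x) = \sum_i \lambda_i q(b_i)$. Then $x - \sum_i \lambda_i b_i$ lies in $\Null(X)$, so it is a combination of the~$n_j$. For linear independence, suppose $\sum_i \lambda_i b_i + \sum_j \mu_j n_j = 0$. Applying~$q$ gives $\sum_i \lambda_i q(b_i) = 0$, so all $\lambda_i = 0$, and then all $\mu_j = 0$.

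It remains to compare the semi-norms. For $x = \sum_i \lambda_i b_i + \sum_j \mu_j n_j$, we have $\|x\| = \|q(x)\| = \bigl\|\sum_i \lambda_i q(b_i)\bigr\|$. We also have $\|x\|_1^{B,B'} = \sum_i |\lambda_i|$, which is exactly the $\ell^1$-norm on~$Y$ with respect to the basis $(q(b_i))_i$, evaluated at~$q(x)$. Both $\|\cdot\|$ on~$Y$ and this $\ell^1$-norm are norms on the finite-dimensional space~$Y$, so they are equivalent. Hence there are constants $c, C > 0$ with
\[ c \cdot \|x\|_1^{B,B'} \le \|x\| \le C \cdot \|x\|_1^{B,B'} \]
for all $x \in X$.

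The only point needing care is that $\|\cdot\|$ factors isometrically through~$q$; this is Lemma~\ref{lemma:seminormbecomesnorm} together with the coset invariance noted above. The rest is elementary linear algebra plus equivalence of norms in finite dimension, and there is no real obstacle.
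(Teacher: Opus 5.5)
Your proof is correct and takes essentially the paper's route. The paper also takes a basis of $\Null(X)$ and extends it to a basis $B$ of $X$ with $B'$ the new elements; your lifts of a basis of $X/\Null(X)$ are just an explicit such extension. Both arguments then compare the two semi-norms through the isometric projection to the finite-dimensional normed quotient $X/\Null(X)$, where all norms are equivalent.
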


\begin{proof}
  We choose an $\R$-basis~$B_{\Null}$ of $\Null(X)$ and extend it to a
  basis~$B$ of~$X$. Then, for $B':=B\setminus B_{\Null}$, we have $N:=
  \Null(X)$ = $\Null_{\|\cdot \|_1^{B,B'}}(X)$. Hence, the semi-norms
  induced by~$\|\cdot \|$ and~$\|\cdot \|_1^{B,B'}$ on the quotient
  space~$X/N$ are indeed norms by
  Lemma~\ref{lemma:seminormbecomesnorm}. Since this quotient space is
  finite dimensional, these induced norms are equivalent. As the
  projection map is isometric for both semi-norms and their induced
  quotient norms, we deduce that the semi-norms on $X$ are also equivalent.
\end{proof}

%%%%%%%%%%%%%%%%%%%%%%%%%%%%%%%%%%%%%%%%%%%%%%%%%%%%%%%%%%%%%%%%%%%%%
\subsection{Boundedness of chain extensions with respect to the induced semi-norm on homology}
\label{subsec:snboundedchainext}
	
We give pragmatic conditions under which for a marked free $\Z$-chain
complex~$C_*$ there is a chain homotopy equivalence between
$C_\ast\otimes_\Z\R$ and $H_\ast (C_\ast;\R)$ that is bounded in every
degree.
% We will make use of Notation~\ref{nota:homologynullspace}. 
For a marked free chain complex $C_\ast$ we will, for every~$n\in\N$,
denote by $|\cdot|_1$ the $\ell^1$-norm on $C_n$ associated to the
marked basis. The induced semi-norm on homology will be denoted by
$\|\cdot\|_1$.

\begin{prop}
  \label{prop:bddchainext}
  Let $C_\ast$ be a marked free $\R$-chain complex. Let $n\in \N$ and
  assume $\overline{H}_n(C_*)$ is finite dimensional.  Let $B$ be a
  basis of $H_n(C_\ast)$ and $B'\subset B$ such that the projection
  map $\pi \colon \ker\partial_n \to H_n(C_\ast)$ is bounded with
  respect to $|\cdot|_1^C$ on $\ker\partial_n\subset C_n$ and $\|\cdot
  \|^{B,B'}_1$ on $H_n(C_\ast)$.  Then there is a chain extension of
  $C_\ast$ in degree $n$ that is bounded with respect to~$|\cdot|_1$
  and~$\|\cdot\|_1^{B,B'}$.
\end{prop}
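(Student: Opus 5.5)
The idea is to build the chain extension coordinatewise with respect to the basis~$B$. The coordinates in~$B'$ will be extended by Hahn--Banach, which keeps them bounded. The coordinates in~$B \setminus B'$ will be extended purely algebraically, which costs nothing because they are invisible to the truncated semi-norm~$\|\cdot\|_1^{B,B'}$.

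\emph{Step 1: $B'$ is finite.} Let $c \in \ker \partial_n$ with~$\|[c]\|_1 = 0$. Boundedness of~$\pi$ gives $\|[c]\|_1^{B,B'} \leq \|\pi\| \cdot |c + \partial b|_1$ for all~$b$; taking the infimum over~$b$ yields $\|[c]\|_1^{B,B'} = 0$. Hence $\Null_n(C_*) \subset \Null_{\|\cdot\|_1^{B,B'}}(H_n(C_*)) = \operatorname{span}(B\setminus B')$. So $H_n(C_*)/\operatorname{span}(B \setminus B')$, which has basis given by the classes of~$B'$, is a quotient of the finite-dimensional space~$\overline H_n(C_*)$. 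Therefore $B'$ is finite.

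\emph{Step 2: construction.} For~$b \in B$, let $\lambda_b \colon H_n(C_*) \to \R$ be the $b$-th coordinate functional. For~$b \in B'$, the map~$\lambda_b \circ \pi \colon \ker \partial_n \to \R$ is bounded with respect to~$|\cdot|_1$, with norm at most~$\|\pi\|$. By Hahn--Banach it extends to a linear functional~$\Lambda_b \colon C_n \to \R$ of the same norm. Next, choose an algebraic complement of~$\ker\partial_n$ in~$C_n$ and let $P \colon C_n \to \ker \partial_n$ be the corresponding linear projection; $P$ is not required to be bounded. Set
\[ s := \pi \circ P + \sum_{b \in B'} \bigl(\Lambda_b - \lambda_b \circ \pi \circ P\bigr) \cdot b
\colon C_n \to H_n(C_*).
\]
This is a finite sum by Step~1, so $s$ is a well-defined linear map.

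\emph{Step 3: verification.} If $x$ is a cycle, then $Px = x$ and $\Lambda_b(x) = \lambda_b(\pi(x))$ for all~$b \in B'$. So every correction term vanishes and $s(x) = \pi(x) = [x]$, i.e., $s$ is a chain extension. For arbitrary~$x$, the $b$-coordinate of~$s(x)$ equals~$\Lambda_b(x)$ for every~$b \in B'$. Hence
\[ \|s(x)\|_1^{B,B'} = \sum_{b\in B'} |\Lambda_b(x)| \leq \# B' \cdot \|\pi\| \cdot |x|_1 ,\]
which is the required bound.

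The main obstacle is Step~1. Without finiteness of~$B'$, Hahn--Banach would have to produce a simultaneous bounded extension of a map into an $\ell^1$-type space, and such extensions do not exist in general. The argument above instead shows that finiteness already follows from boundedness of~$\pi$ together with the hypothesis that $\overline H_n(C_*)$ is finite-dimensional.
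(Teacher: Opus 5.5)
Your proposal is correct and follows the paper's proof essentially step by step. Both arguments get finiteness of $B'$ from $\Null_n(C_*)\subset \Null_{\|\cdot\|_1^{B,B'}}(H_n(C_*))$. Both use Hahn--Banach extensions of the coordinates $\lambda_b\circ\pi$ for $b\in B'$ and an algebraic complement of $\ker\partial_n$ for the remaining coordinates. Your map $\pi\circ P+\sum_{b\in B'}(\Lambda_b-\lambda_b\circ\pi\circ P)\cdot b$ is the same as the paper's $\sum_{b\in B}s_b(\cdot)\cdot b$; writing it your way just makes finiteness of the sum immediate rather than a separate check.
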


\begin{proof}
  Since $\pi \colon \ker\partial_n \to H_n(C_\ast)$ is bounded with
  respect to $|\cdot|_1^C$ and $\|\cdot\|_1^{B,B'}$, the null space
  $N:= \Null_{\|\cdot \|_1^{B,B'}} (H_n(C_\ast))$ contains
  $\Null_n(C_\ast)$.  So there is a canonical projection
  \[ \overline{H}_n(C_*) = H_n(C_*)/\Null_n(C_\ast)\to H_n(C_*)/N.
  \]
  Since $\overline{H}_n(C_*)$ is finite dimensional by assumption,
  this implies that $H_n(C_*)/N$ is finite dimensional and so $B'$ is
  a finite subfamily of $B$.
    		
  In order to find an extension of $\pi$ to $C_n$, we use the
  Hahn--Banach theorem to extend the coordinate functions of $\pi$
  with respect to the basis $B$.  Let $b\in B$. We denote by $\ev_b$
  the $\R$-linear map given by
  \begin{align*}
    \ev_b \colon H_n(C_\ast) &\to \R\\
    B \ni b' &\mapsto  \delta_{b,b'}.
  \end{align*}
  We can describe $\pi$ via the coordinate maps $\ev_b$ since
  \begin{align*}
    \pi(x) = \sum_{b\in B} \ev_b(\pi(x))\cdot b
  \end{align*}
  for all $x\in \ker(\partial_n)$.
  
  For all $b \in B$, the map $\pi_b := \ev_b\circ \pi$ satisfies
  \[\|\pi_b\| = \|ev_b\circ \pi\|\leq \|ev_b\|\cdot \|\pi\| < \infty.
  \]
  For the calculation of the operator norms we consider the norm
  $|\cdot|_1^C$ on $\ker\partial_n$, the semi-norm
  $\|\cdot\|_1^{B,B'}$ on $H_n(C_*)$, and the absolute value on $\R$.
  
  For each of the finitely many elements $b\in B'$ we choose an
  extension~$s_b\colon C_n \to \R$ of~$\pi_b$ with~$\|s_b\| =
  \|\pi_b\|$. The existence of such extensions is guaranteed by the
  Hahn--Banach theorem.

  In order to extend the maps $\pi_b$ for the remaining basis elements
  $b\in B\setminus B'$, we choose a linear complement~$U$ of $\ker
  \partial_n$ in $C_n $ and define
  \begin{align*}
    s_b\colon C_n = \ker \partial_n \oplus U &\to \R\\
    (x,u)&\mapsto \pi_b(x).
  \end{align*}
  
  We will prove that the map
  \begin{align*}
    s\colon C_n &\to H_n(C_\ast) \\
    c &\mapsto \sum_{b\in B} s_b(c) \cdot b
  \end{align*}
  is a well-defined chain extension of $C_\ast$ in degree $n$:
  
  To prove that the map is well-defined, we need to check that, for
  every $c\in C_n$, its image is a finite sum.  This is true, as for
  $c\in \ker\partial_n$, we know that $s(c) = \pi (c)$ is a finite sum
  and if $c\in U$, then $s_b(c)$ is zero for all $b\in B\setminus B'$.
  As $B'$ is finite, $s(c)$ has only finitely
  many non-zero summands.  With $C_n = \ker\partial_n \oplus U$, we
  conclude that $s$ is well-defined.
		
  Since every coordinate map $s_b$ of $s$ is $\R$-linear, also the map
  $s$ is $\R$-linear and by construction it is a chain extension of
  $\pi$.
		
  Furthermore, $\|s\|$ is finite, as for every $c\in C_n$ we have
  \begin{align*}
    \bigl\|s(c)\bigr\| &= \biggl\| \sum_{b\in B} s_b(c) \cdot b\biggr\|_1^{B,B'} \leq \sum_{b\in B'} |s_b(c)| \\
    &\leq \sum_{b\in B'} \|s_b\| \cdot |c|_1 = \sum_{b\in B'} \|\pi_b\| \cdot |c|_1 \leq \# B' \cdot \|\pi\| \cdot |c|_1.
  \end{align*}
  Hence, $\|s\| \leq \# B'\cdot \|\pi\|$, which is finite, since $B'$
  is a finite family.
\end{proof}

\begin{cor}
  \label{cor:bddchainext}
  Let $C_\ast$ be a marked free $\Z$-chain complex and let $n\in \N$.
  If $\overline{H}_n(C_\ast;\R)$ is finite dimensional, then there is
  a chain extension of~$C_\ast$ in degree~$n$ over~$\R$ that is
  bounded with respect to the induced semi-norm on $H_n(C_\ast;\R)$.
\end{cor}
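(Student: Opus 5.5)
We deduce the corollary from Proposition~\ref{prop:bddchainext}, applied to the marked free $\R$-chain complex $C_* \otimes_\Z \R$. The marking is the one induced by the marked $\Z$-basis of~$C_*$, so the $\ell^1$-norm on $C_*\otimes_\Z\R$ is the one from the statement. What remains is to produce a basis~$B$ of~$H_n(C_*;\R)$ and a subfamily~$B'$ such that two things hold: the canonical projection $\pi\colon \ker \partial_n \to H_n(C_*;\R)$ is bounded with respect to $|\cdot|_1$ and $\|\cdot\|_1^{B,B'}$, and $\|\cdot\|_1^{B,B'}$ is equivalent to the induced semi-norm~$\|\cdot\|_1$ on~$H_n(C_*;\R)$.

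For the basis, we use Lemma~\ref{lemma:seminormequiv}. Its hypothesis is that $X/\Null(X)$ is finite dimensional for $X := H_n(C_*;\R)$ with the induced semi-norm~$\|\cdot\|_1$, and this quotient is exactly $\overline H_n(C_*;\R)$, which is finite dimensional by assumption. The lemma then yields $B$ and $B'\subset B$ (with $B\setminus B'$ a basis of $\Null_n$) and a constant $K$ with $\|\cdot\|_1^{B,B'} \leq K\cdot\|\cdot\|_1$ and vice versa.

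Boundedness of~$\pi$ then follows at once. For $z\in\ker\partial_n$ we have $\|\pi(z)\|_1 = \|[z]\|_1 \leq |z|_1$ by definition of the induced semi-norm. Hence $\|\pi(z)\|_1^{B,B'} \leq K\cdot |z|_1$.

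Proposition~\ref{prop:bddchainext} therefore gives a chain extension $s\colon C_n\otimes_\Z\R\to H_n(C_*;\R)$ that is bounded with respect to $|\cdot|_1$ and $\|\cdot\|_1^{B,B'}$. By the reverse inequality of the equivalence, $\|s(c)\|_1 \leq K\cdot\|s(c)\|_1^{B,B'}$, so $s$ is also bounded with respect to the induced semi-norm~$\|\cdot\|_1$.

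The only point needing care is the use of Lemma~\ref{lemma:seminormequiv}: we need equivalence of semi-norms in both directions on all of $H_n(C_*;\R)$, not merely on the quotient. The lemma provides exactly this, because both semi-norms share the null space $\Null_n(C_*\otimes_\Z\R)$ and induce equivalent norms on the finite-dimensional quotient $\overline H_n$. The rest is routine bookkeeping.
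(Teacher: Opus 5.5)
Your proposal is correct and follows the paper's proof: use Lemma~\ref{lemma:seminormequiv} to replace the induced semi-norm by an equivalent truncated $\ell^1$-semi-norm, note that $\pi$ stays bounded for it, apply Proposition~\ref{prop:bddchainext}, and transfer boundedness back via the equivalence. Your extra remark that $\|[z]\|_1 \leq |z|_1$ by definition just spells out why $\pi$ is bounded for the induced semi-norm, which the paper leaves implicit.
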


\begin{proof}
  Since $\overline{H}_n(C_\ast;\R)$ is finite dimensional, the induced
  semi-norm $\|\cdot \|_1^C$ on~$H_n(C_\ast;\R)$ is equivalent to a
  truncated $\ell^1$-semi-norm on $H_n(C_\ast;\R)$ by
  Lem\-ma~\ref{lemma:seminormequiv}.  Hence, the canonical projection
  $\pi \colon \ker\partial_n\otimes_\Z\R \to H_n(C_\ast;\R)$ that is
  bounded with respect to the canonical norm and semi-norm is also
  bounded with respect to this truncated $\ell^1$-semi-norm on
  homology.  We conclude using Proposition~\ref{prop:bddchainext} and
  using again the equivalence of the semi-norms on homology.
\end{proof}

\begin{cor}
  \label{cor:chhtpyequivtohomology}
  Let $C_\ast$ be a free marked $\Z$-chain complex such that
  $\overline{H}_n(C_\ast;\R)$ is finite dimensional in every degree
  $n\in\N$.  Then there is an $\R$-chain homotopy equivalence
  $(C_\ast,\partial_\ast) \to (H_\ast(C_*),0)$ that is bounded in
  every degree with respect to the induced semi-norm on homology.
\end{cor}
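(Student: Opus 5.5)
We take the chain extensions provided by Corollary~\ref{cor:bddchainext} in every degree and assemble them into a chain map. For each~$n \in \N$, since $\overline{H}_n(C_*;\R)$ is finite dimensional, Corollary~\ref{cor:bddchainext} gives a chain extension $s_n \colon C_n \otimes_\Z \R \to H_n(C_*;\R)$. It is bounded with respect to $|\cdot|_1$ on the domain and the induced semi-norm $\|\cdot\|_1$ on homology. We then define $s_* \colon C_* \otimes_\Z \R \to H_*(C_*;\R)$ degreewise by these maps. The target is viewed as a chain complex with trivial boundary operator, carrying the induced semi-norm in each degree.

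First, we check that $s_*$ is a chain map, i.e.\ that $s_{n-1} \circ \partial_n = 0$ for all~$n$. For $c \in C_n \otimes_\Z \R$, the chain $\partial_n c$ is a cycle in degree~$n-1$. The chain extension property therefore gives $s_{n-1}(\partial_n c) = [\partial_n c] = 0$. Boundedness in every degree holds by construction.

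Second, we show that $s_*$ is a chain homotopy equivalence. As observed in Section~\ref{sec:chainext}, under $H_*(H_*(C_*)) \cong H_*(C_*)$ the map $H_*(s_*)$ corresponds to the identity, because $s_n$ restricts to the canonical projection on cycles. So $s_*$ is a homology isomorphism. The complex $C_* \otimes_\Z \R$ is free and bounded below, and the homology complex consists of vector spaces, so it is free as well. A quasi-isomorphism between bounded below free (equivalently, here, projective) complexes is a chain homotopy equivalence, by the result cited in Section~\ref{sec:chainext}~\cite[Chapter~1, Theorem~8.4]{brown_cohomofgroups}. Alternatively, over the field~$\R$ one can write down a homotopy inverse explicitly. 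Choose cycle representatives for a basis of each $H_n(C_*;\R)$, giving $g_* \colon H_*(C_*;\R) \to C_* \otimes_\Z \R$. Then $s_* \circ g_* = \id$, and $g_* \circ s_* \simeq \id$ because both induce the same map on homology over a field.

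The only subtle point is that the statement asks only for boundedness of the map $C_* \to H_*(C_*)$ itself, and not of the homotopy inverse or the homotopies. That boundedness is exactly what Corollary~\ref{cor:bddchainext} supplies degreewise. So the main work lies in that corollary, and here we only need to make sure the degreewise choices are compatible. They are, since the chain map condition is automatic for chain extensions.
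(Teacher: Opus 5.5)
Your proposal is correct and follows the paper's proof: you assemble the degreewise bounded chain extensions from Corollary~\ref{cor:bddchainext} into a chain map~$s_*$ and invoke \cite[Chapter~1, Theorem~8.4]{brown_cohomofgroups} to conclude that it is a chain homotopy equivalence. Your checks that $s_*$ is a chain map and a homology isomorphism, and your alternative homotopy inverse over the field~$\R$, spell out details the paper leaves implicit.
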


\begin{proof}
  By Corollary~\ref{cor:bddchainext}, we obtain for every $n\in\N$ a
  chain extension $s_n$ of~$C_*$ in degree $n$ over $\R$ that is
  bounded with respect to the induced semi-norm on~$H_n(C_*;\R)$.  The
  resulting chain map $s_*$ is a chain homotopy
  equivalence~\cite[Chapter~1, Theorem~8.4]{brown_cohomofgroups} as
  the $\R$-chain complex $C_*\otimes_\Z \R$ is free in every degree
  and bounded below.
\end{proof}

\begin{example}
  \label{exa:topspaces}
  Let $X$ be a topological space such that $\overline{H}_n(X;\R)$ is
  finite dimensional in every degree $n\in\N$.  Then, by
  Corollary~\ref{cor:chhtpyequivtohomology}, there is a chain homotopy
  equivalence $s_\ast$ between $C_\ast (X;\R)$ and $H_\ast(X; \R)$
  endowed with the trivial boundary operator that is bounded in every
  degree with respect to the $\ell^1$-norm on $C_\ast (X;\R)$
  associated to the basis of singular simplices and the induced
  semi-norm on $H_*(X;\R)$.
  
  Instances of topological spaces $X$ that fulfil these assumptions
  are closed manifolds and CW-complexes of finite type.
\end{example}

\subsection{The Künneth isomorphism is bilipschitz}
\label{subsec:normedkuenneth}
	
We consider the K\"unneth theorem, which describes the homology of a
tensor product complex by the homology of the factors.  The goal is to
study how this isomorphism behaves with respect to norms.  One key
ingredient in this comparison is the existence of a bounded chain
homotopy equivalence between an $\R$-chain complex and its homology
(Corollary~\ref{cor:bddchainext}).

\begin{thm}[K\"unneth isomorphism, normed version]
  \label{thm:opnorm-Kunneth}
  Let $C_\ast$ and $D_\ast$ be normed $\R$ chain complexes such that
  $\overline{H}_n(C_\ast;\R)$ is finite dimensional in every degree
  $n\in\N$.  Then, for each~$n \in \N$, the K\"unneth isomorphism
  \begin{align*}
    \varphi_n\colon \bigoplus_{p\in \N} H_p(C_\ast) \otimes_\R H_{n-p}(D_\ast) &\to H_n(C_\ast\otimes_\R D_\ast) \\
    [c]\otimes [d] &\mapsto [c\otimes d]
  \end{align*} 
  is bilipschitz with respect to $\normsum{\cdot}$ and $\normhom{\cdot}{C}$.
\end{thm}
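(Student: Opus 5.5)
The plan is to prove the two Lipschitz estimates separately. The bound for~$\varphi_n$ follows directly from the tensor norm. The bound for~$\varphi_n^{-1}$ comes from factoring it through a bounded chain homotopy equivalence of~$C_*$ with its homology complex, in the spirit of Corollary~\ref{cor:chhtpyequivtohomology}. Throughout, I take~$\normsum{\cdot}$ to be the sum over~$p$ of the projective tensor semi-norms built from the induced semi-norms on~$H_p(C_*)$ and~$H_{n-p}(D_*)$. On~$C_*\otimes_\R D_*$ I take the projective tensor norm, so that~$|c\otimes d| \leq |c|\cdot|d|$.

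\emph{Boundedness of~$\varphi_n$.} Let~$\alpha\in H_p(C_*)$ and~$\beta \in H_{n-p}(D_*)$. For any representing cycles~$c$ and~$d$, the chain~$c\otimes d$ is a cycle representing~$\varphi_n(\alpha\otimes\beta)$. Hence
\[\normhom{\varphi_n(\alpha\otimes \beta)}{C} \leq |c\otimes d| \leq |c|\cdot|d|.\]
Taking the infimum over representatives gives~$\normhom{\varphi_n(\alpha\otimes\beta)}{C}\leq \|\alpha\|\cdot\|\beta\|$. For a general element~$\sum_i \alpha_i\otimes\beta_i$, the triangle inequality and the infimum over all such representations give~$\|\varphi_n\|\leq 1$.

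\emph{Boundedness of~$\varphi_n^{-1}$.}
\begin{enumerate}
\item First I produce, for every degree~$p\leq n$, a chain extension~$s_p\colon C_p\to H_p(C_*)$ that is bounded with respect to the induced semi-norm. I would rerun the proofs of Corollary~\ref{cor:bddchainext} and Proposition~\ref{prop:bddchainext}, which only use Hahn--Banach, the finite dimensionality of~$\overline H_p(C_*)$, and Lemma~\ref{lemma:seminormequiv}. None of these needs the marking, so they apply to normed~$\R$-chain complexes.
\item The chain extensions assemble into a chain map~$s_*\colon C_*\to (H_*(C_*),0)$. Then~$s_*\otimes\id_{D_*}\colon C_*\otimes_\R D_*\to H_*(C_*)\otimes_\R D_*$ is a chain map. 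In degree~$n$ it is bounded by~$\max_{p\leq n}\|s_p\|$, because only finitely many summands occur.
\item The target complex splits as~$\bigoplus_p H_p(C_*)\otimes_\R D_{*-p}$, with differential~$\pm\id\otimes\partial^D$. Over the field~$\R$ its homology is canonically~$\bigoplus_p H_p(C_*)\otimes_\R H_{n-p}(D_*)$.
\item The composite of~$H_n(s_*\otimes\id)$ with this identification sends~$[c\otimes d]$ to~$s_p(c)\otimes[d]=[c]\otimes[d]$. It is therefore~$\varphi_n^{-1}$, and it remains to show that the identification map~$\Theta$ from~$H_n(H_*(C_*)\otimes D_*)$ to~$\bigoplus_p H_p(C_*)\otimes H_{n-p}(D_*)$ is bounded.
\end{enumerate}

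\emph{Main obstacle: boundedness of~$\Theta$.} By Lemma~\ref{lemma:seminormequiv}, I replace the semi-norm on~$H_p(C_*)$ by an equivalent truncated~$\ell^1$-semi-norm associated to~$(B,B')$, where~$B'$ is finite and~$B\setminus B'$ spans the null space. The tensor semi-norm on the left-hand side changes only by a constant under this replacement. A chain in~$H_p(C_*)\otimes D_q$ can then be written uniquely as~$\sum_{b\in B} b\otimes d_b$. Its semi-norm is comparable to~$\sum_{b\in B'}|d_b|$, and it is a cycle if and only if each~$d_b$ is a cycle. Its image under~$\Theta$ is~$\sum_b b\otimes[d_b]$. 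In the tensor semi-norm, the terms with~$b\in B\setminus B'$ vanish, since~$\|b\|=0$ kills them. This gives
\[\Bigl\|\Theta\Bigl[\sum_b b\otimes d_b\Bigr]\Bigr\|_{\otimes}\leq \sum_{b\in B'}\|b\|\cdot\|[d_b]\| \leq \text{const}\cdot\sum_{b\in B'}|d_b|.\]
This bound must hold for every representing cycle. Changing the representative by a boundary~$\sum_b b\otimes\partial e_b$ does not change the classes~$[d_b]$, so it does not affect the estimate, and we may pass to the infimum. The care needed here is in checking that the null-space directions genuinely contribute nothing and that the finitely many constants combine into a bound uniform in~$p\leq n$.

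Combining the steps yields~$\|\varphi_n^{-1}\|\leq \text{const}\cdot\max_{p\leq n}\|s_p\|<\infty$. Together with~$\|\varphi_n\|\leq 1$, this shows that~$\varphi_n$ is bilipschitz.
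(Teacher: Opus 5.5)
Your proof is correct and follows the paper's route. In both, bounded chain extensions from the Hahn--Banach construction of Proposition~\ref{prop:bddchainext} reduce the problem to a first factor with trivial boundary operator. There, a truncated $\ell^1$-semi-norm from Lemma~\ref{lemma:seminormequiv} and the unique representatives $\sum_b b\otimes y_b$ show that your $\Theta$ (the paper's $\psi_n$) is bounded.

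Two small points improve on the paper. You check $\Theta\circ H_n(s_*\otimes\id)=\varphi_n^{-1}$ directly on generators, so you never need $s_*$ to be a chain homotopy equivalence. You also observe that the construction needs no marking, which the paper glosses over by citing Corollary~\ref{cor:chhtpyequivtohomology}, stated only for marked complexes.
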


In order to define $\normsum{\cdot}$ and $\normhom{\cdot}{C}$, we
introduce (semi-)norms on tensor products.

\begin{defi}[tensor (semi-)norm]
  Let $X$, $Y$ be normed (resp.\ semi-normed) $\R$-vector spaces. Then
  $X\otimes_\R Y$ carries a canonical norm (resp.\ semi-norm)
  $\|\cdot\|_\otimes$ that is given on $z\in X\otimes_\R Y$ by
  \begin{align*}
    \|z\|_\otimes := \inf \biggl\lbrace \sum_{i\in I} \|x_i\|_X\cdot \|y_i\|_Y \biggm|\; &I \text{ a finite set, } \fa{i\in I} x_i\in X \land y_i\in Y,  \\
    &z = \sum_{i\in I} x_i\otimes y_i \biggr\rbrace.
  \end{align*}
\end{defi}

This (semi-)norm on a tensor product is often referred to as
projective (tensor) norm to distinguish it from other tensor norms.
Since we only consider this specific tensor norm, we simply call it
tensor norm.

\begin{notation}
  Let $C_\ast$ and $D_\ast$ be semi-normed $\R$-chain complexes with
  norms \mbox{$|\cdot|_{C}$} and~$|\cdot|_{D}$, respectively, and let
  $n\in\N$.	
  \begin{itemize}
  \item We denote by $\normsum{\cdot}$ the semi-norm on
    $\bigoplus_{p\in \N} H_p(C_\ast) \otimes_\R H_{n-p}(D_\ast)$ given
    by the $\ell^1$-sum of the tensor semi-norms associated to $\|\cdot\|_C$ and
    $\|\cdot\|_D$ on each summand.
  \item We denote by $\normtensorcplx{\cdot}{C}$ the norm on
    \[(C_\ast\otimes_\R D_\ast)_n = \bigoplus_{p\in \N} C_p \otimes_\R D_{n-p}
    \]
    given by the $\ell^1$-sum of the tensor semi-norms associated to $|\cdot|_C$
    and $|\cdot|_D$ on each summand.  With $\normhom{\cdot}{C}$, we
    denote the induced semi-norm on the homology of the tensor product
    complex $C_\ast \otimes_\R D_\ast$.
  \item If $C_\ast$ and $D_\ast$ are marked free chain complexes, then
    $\normtensorcplx{\cdot}{C}$ is equivalent to the $\ell^1$-norm on
    $(C_\ast\otimes_\R D_\ast)_n$ given by the basis on the tensor
    product complex induced by the marked bases of the factors
    (Corollary~\ref{cor:tensornormis1norm}). We denote it
    by~$\normtensorcplx{\cdot}{C}_1$ and the induced semi-norm on
    homology by~$\normhom{\cdot}{C}_1$.
  \end{itemize}
\end{notation}

We will often use the following description of the tensor (semi-)norm:

\begin{lemma}
  \label{lemma:tensornormwithtruncted1norm}
  Let $X$ be an $\R$-vector space equipped with a truncated
  $\ell^1$-semi-norm associated to the basis $B$ and a subfamily
  $B'\subset B$ and let $Y$ be a (semi-) normed $\R$-vector space.
  Then, for all $z\in X\otimes_\R Y$, the norm $\|z\|_\otimes$ is
  attained by a specific representative, namely
  \[ \|z\|_\otimes = \sum_{b\in B} \|b\|_1^{B,B'} \cdot \|z_b\|_Y = \sum_{b\in B'} \|z_b\|_Y, 
  \]
  where $z= \sum_{b\in B} b\otimes z_b$ with $z_b\in Y$. Note that
  every element in $X\otimes_\R Y$ has such a representative and that
  it is unique.
\end{lemma}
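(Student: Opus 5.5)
We prove the lemma statement about the tensor semi-norm with a truncated $\ell^1$-semi-norm on~$X$. First, existence and uniqueness of the representative: since $B$ is a basis of~$X$, we have $X \otimes_\R Y \cong \bigoplus_{b \in B} b \otimes Y$. So every $z$ can be written uniquely as $z = \sum_{b\in B} b \otimes z_b$, with only finitely many $z_b \neq 0$. This representative gives the upper bound
\[ \|z\|_\otimes \leq \sum_{b\in B} \|b\|_1^{B,B'} \cdot \|z_b\|_Y = \sum_{b\in B'} \|z_b\|_Y, \]
because $\|b\|_1^{B,B'}$ is $1$ for $b \in B'$ and $0$ otherwise.

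For the lower bound, let $z = \sum_{i\in I} x_i \otimes y_i$ be an arbitrary finite representation. Expand each $x_i = \sum_{b\in B} \lambda_{i,b} \cdot b$. Bilinearity and uniqueness of the coordinates then force
\[ z_b = \sum_{i\in I} \lambda_{i,b} \cdot y_i \]
for every $b \in B$. By the triangle inequality and homogeneity in~$Y$,
\[ \sum_{b\in B'} \|z_b\|_Y \leq \sum_{b \in B'} \sum_{i\in I} |\lambda_{i,b}| \cdot \|y_i\|_Y = \sum_{i\in I} \Bigl(\sum_{b\in B'} |\lambda_{i,b}|\Bigr) \cdot \|y_i\|_Y = \sum_{i\in I} \|x_i\|_1^{B,B'} \cdot \|y_i\|_Y. \]
Taking the infimum over all representations gives $\sum_{b\in B'}\|z_b\|_Y \leq \|z\|_\otimes$, so the two bounds combine to the claimed equality.

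The only point needing care is the identification of the coefficients $z_b$. This relies on the decomposition $X\otimes_\R Y \cong \bigoplus_B Y$ being compatible with the expansion of the $x_i$; it is standard and holds because tensor products commute with direct sums. Apart from this, the argument is routine. In particular, it does not matter whether $B'$ is infinite, since each sum involved is finite.
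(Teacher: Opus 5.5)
Your proposal is correct and follows the paper's proof: the upper bound comes from the distinguished representative $z = \sum_{b\in B} b \otimes z_b$, and the lower bound comes from expanding an arbitrary representative $\sum_{i\in I} x_i \otimes y_i$ in the basis $B$, identifying $z_b = \sum_{i\in I} \lambda_{i,b} \cdot y_i$, applying the triangle inequality in $Y$, and taking the infimum.
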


\begin{proof}
  Let $z = \sum_{b\in B} b\otimes z_b\in X\otimes_\R Y$. By definition
  of the tensor semi-norm we have
  \[ \|z\|_\otimes \leq \sum_{b\in B} \|b\|_1^{B,B'} \cdot \|z_b\|_Y = \sum_{b\in B'} \|z_b\|_Y.
  \]
  Conversely, let $z= \sum_{i\in I} x_i \otimes y_i$ be a tensor
  representative of $z$ with $x_i =\sum_{b\in B} \lambda_{i,b}\cdot b$
  for all $i\in I$.  This means
  \[ z= \sum_{i\in I} x_i \otimes y_i = \sum_{i\in I} \sum_{b\in B} (\lambda_{i,b}\cdot b) \otimes y_i = \sum_{b\in B} b\otimes \Big(\sum_{i\in I}\lambda_{i,b}\cdot y_i\Big).
  \]
  Therefore we have $z_b = \sum_{i\in I}\lambda_{i,b}\cdot y_i$ for all $b\in B$.
  We compute
  \[ \sum_{i\in I} \|x_i\|_X \cdot \|y_i\|_Y = \sum_{i\in I} \sum_{b\in B'} |\lambda_{i,b} | \cdot \|y_i\|_Y \geq \sum_{b\in B'} \biggl\|\sum_{i\in I} \lambda_{i,b} \cdot y_i\biggr\|_Y = \sum_{b\in B'} \|z_b\|_Y.
  \]
  Taking the infimum over all tensor representatives of $z$ yields
  \[ \|z\|_\otimes \geq \sum_{b\in B'} \|z_b\|_Y = \sum_{b\in B} \|b\|_1^{B,B'} \cdot \|z_b\|_Y.\qedhere
  \]
\end{proof}

\begin{example}
  \label{cor:tensornormis1norm}
  Let $X$ and $Y$ be marked $\R$-vector spaces with the corresponding
  $\ell^1$-norms. Then, by Lemma~\ref{lemma:tensornormwithtruncted1norm}, the induced tensor norm on $X\otimes_\R Y$
  coincides with the $\ell^1$-norm on $X\otimes_\R Y$ associated to
  the basis that is given by the marked bases of $X$ and~$Y$.
\end{example}

\begin{proof}[Proof of Theorem~\ref{thm:opnorm-Kunneth}]
  We start by proving that the Künneth isomorphism~$\varphi_n$ in
  degree $n\in\N$ is bounded.  Since $\normsum{\cdot}$ is the
  $\ell^1$-sum of the tensor norms on the summands and only finitely
  many summands are non-zero, it suffices to show that $\varphi_n$ is
  bounded on every summand.  Let $p\in \lbrace 0,\ldots, n\rbrace$ and
  $x = \sum_{i\in I} \alpha_i\otimes \beta_i \in H_p(C_\ast)
  \otimes_\R H_{n-p}(D_\ast)$. For each $i\in I$, we choose a
  representing cycle $c_i$ of $\alpha_i$ and $d_i$ of $\beta_i$. Then
  \begin{align*}
    \bignormhom{\varphi_n(x)}{C} &= \normhom{\biggl[\sum_{i\in I}c_i\otimes d_i\biggr]}{C} \leq  \normtensorcplx{\sum_{i\in I}c_i\otimes d_i}{C} \\
     &\leq \sum_{i\in I}|c_i|_1^C\cdot |d_i|_1^D.
  \end{align*}
  Taking the infimum over all representing cycles and over all tensor
  representatives, we obtain
  \begin{align*}
    \bignormhom{\varphi_n(x)}{C}\leq \normsum{x}.
  \end{align*}

  In order to prove that the inverse of the Künneth isomorphism is
  bounded, we reduce to the case that $C_*$ has trivial boundary
  operator and give an explicit description of such an inverse map in
  this case.  Let $(H_\ast(C_\ast),0)$ be the homology chain complex
  of $C_\ast$ endowed with the semi-norm on homology of $C_\ast$.
  Then there is a chain homotopy equivalence $f_*: C_\ast\to
  H_\ast(C_\ast)$ that is bounded in every degree by
  Corollary~\ref{cor:chhtpyequivtohomology}.

  By assumption, in every degree $n$, the space
  $H_n(C_*)/\Null(H_n(C_*))$ is finite dimensional. So by
  Lemma~\ref{lemma:seminormequiv} we can choose a basis $B_n$ of
  $H_n(C_*)$ and a subfamily $B'_n$ of $B_n$ such that the
  semi-norm on $H_n(C_*)$ is equivalent to the truncated $\ell^1$-semi
  norm $\|\cdot\|_1^{B_n,B'_n}$.  We denote the homology complex
  endowed with this semi-norm by $A_*$ and denote the truncated
  $\ell^1$-semi-norm on~$A_*$ by $|\cdot|_A$. By construction, the identity map is
  is a chain homotopy equivalence between $A_*$ and $H_*(C_*)$ bounded in every degree 
  and the chain homotopy equivalence~$f_*\colon
  C_\ast\to A_*$ is also bounded in every degree.
  
  We have the following chain of isomorphisms that describes the
  inverse of the Künneth isomorphism for $C_*\otimes_\R D_*$:
  \[
   \begin{tikzcd}
     H_n(C_\ast\otimes_\R D_\ast) \ar[d,"H_n(f_*\otimes_\R \id_{D_*})\text{, bounded}", "\cong"']
     \\
     H_n(A_\ast \otimes_\R D_\ast) \ar[d,"\text{inverse of the Künneth isomorphism for } A_*\otimes_\R D_*", "\cong"']
     \\
    \bigoplus_{p\in\N} H_p(A_*) \otimes_\R H_{n-p}(D_\ast) \ar[d,"\text{bounded isomorphism by definition of } A_*", "\cong"']
     \\
     \bigoplus_{p\in\N} H_p(C_\ast) \otimes_\R H_{n-p}(D_\ast)
   \end{tikzcd}
  \]
   
  So in order to prove the theorem, it remains to show that the
  inverse of the Künneth isomorphism
  \begin{align*}
    \varphi_n\colon \bigoplus_{p\in \N} H_p(A_\ast) \otimes_\R H_{n-p}(D_\ast) &\to H_n(A_\ast\otimes_\R D_\ast) \\
    [a]\otimes [d] &\mapsto [a\otimes d]
  \end{align*} 
  is bounded.

  \begin{claim*}
    The map
    \begin{align*}
      \psi_n\colon H_n(A_\ast\otimes_\R D_\ast) &\to \bigoplus_{p\in \N} H_p(A_\ast) \otimes_\R H_{n-p}(D_\ast) \\
      \Bigg[\sum_{p=0}^n \sum_{b\in B_p} b\otimes y_{p,b} \Bigg] &\mapsto \sum_{p=0}^n \sum_{b\in B_p} [b]\otimes \big[y_{p,b}\big]
    \end{align*} 
    is well-defined, has bounded norm, and is inverse to the Künneth map.
  \end{claim*}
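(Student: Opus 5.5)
We check the three assertions of the Claim in turn, using the very simple shape of $A_*$: its boundary operator is trivial, and in each degree $p$ the space $A_p$ carries the truncated $\ell^1$-semi-norm $|\cdot|_A=\|\cdot\|_1^{B_p,B'_p}$.

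\emph{Well-definedness.} Since $\partial^A=0$, the boundary in $A_*\otimes_\R D_*$ is $\partial(a\otimes d)=\pm a\otimes\partial d$. Every element of $(A_*\otimes_\R D_*)_n$ can be written uniquely as $z=\sum_{p}\sum_{b\in B_p} b\otimes y_{p,b}$ with finitely many nonzero $y_{p,b}\in D_{n-p}$, because $B_p$ is a basis of $A_p$. Then
\[ \partial z=\sum_{p}\sum_{b\in B_p}\pm b\otimes \partial y_{p,b}, \]
and by uniqueness of this representation, $z$ is a cycle if and only if every $y_{p,b}$ is a cycle. So the classes $[y_{p,b}]$ make sense.

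Next, let $z=\partial w$ with $w=\sum_{p}\sum_{b} b\otimes x_{p,b}$. Then $y_{p,b}=\pm\partial x_{p,b}$, so $[y_{p,b}]=0$. Hence $\psi_n$ is a well-defined linear map on homology. Here $[b]$ is just $b\in H_p(A_*)=A_p$.

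\emph{Inverse to the Künneth map.} Composing $\varphi_n\circ\psi_n$ sends $[z]$ to $[\sum b\otimes y_{p,b}]=[z]$. For the other composition, expand an elementary tensor $a\otimes[d]$ as $a=\sum_b\lambda_b b$. Then $\varphi_n(a\otimes[d])=[\sum_b b\otimes \lambda_b d]$, and $\psi_n$ returns $\sum_b b\otimes[\lambda_b d]=a\otimes[d]$. Since such elements span the domain, $\psi_n\circ\varphi_n=\id$.

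\emph{Boundedness; this is the main point.} Let $[z]\in H_n(A_*\otimes_\R D_*)$ and let $z'$ be any representing cycle, written as $z'=\sum b\otimes y'_{p,b}$. The value $\psi_n([z])=\sum b\otimes[y'_{p,b}]$ does not depend on the representative. By Lemma~\ref{lemma:tensornormwithtruncted1norm}, applied summandwise with $Y=H_{n-p}(D_*)$,
\[ \normsum{\psi_n([z])}=\sum_p\sum_{b\in B'_p}\bigl\|[y'_{p,b}]\bigr\|_D\le \sum_p\sum_{b\in B'_p}|y'_{p,b}|_D. \]
Applying the same lemma on chain level, with $Y=D_{n-p}$, the right-hand side equals the $\ell^1$-sum of tensor norms of $z'$, that is, $\normtensorcplx{z'}{A}$. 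Taking the infimum over all representing cycles $z'$ gives $\normsum{\psi_n([z])}\le\normhom{[z]}{A}$. Thus $\psi_n$ has norm at most $1$.

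The only delicate points are the ones already handled above: truncated coordinates are unique, which is what makes the cycle decomposition coordinatewise, and we must use the exact formula for the tensor norm from Lemma~\ref{lemma:tensornormwithtruncted1norm} rather than a mere inequality.
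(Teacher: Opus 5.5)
Your proposal is correct and follows essentially the same route as the paper. The unique expansion $\sum_p\sum_{b\in B_p} b\otimes y_{p,b}$ makes the cycle and boundary conditions coordinatewise, both compositions are checked directly, and boundedness comes from Lemma~\ref{lemma:tensornormwithtruncted1norm} applied once on homology and once on chain level, followed by the infimum over representing cycles. Your estimate also keeps the sum over $p$ on the right-hand side, which the paper's displayed inequality drops, and yields $\|\psi_n\|\le 1$.
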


  \begin{proof}[Proof of Claim]
    In degree $n\in\N$, every $n$-chain of $A_*\otimes_\R D_*$ has a
    unique representative of the form $\sum_{p=0}^n \sum_{b\in B_p}
    b\otimes y_{p,b}$ with $y_{p,b}\in D_{n-p}$.  Furthermore, if
    $\sum_{p=0}^n \sum_{b\in B_p} b\otimes y_{p,b}$ is a cycle, then,
    for every $p\in\lbrace 1,\ldots, n\rbrace $, the chain $\sum_{b\in
      B_p} b\otimes \partial^D y_{p,b}=0$ is trivial, since
    $\partial^A$ is trivial.  By uniqueness of such a representative,
    we conclude that every $\partial^D y_{p,b}$ is zero and $y_{p,b}$
    is a cycle.  Hence, the expression on the right hand side of the
    definition of $\psi_n$ indeed describes classes in
    $\bigoplus_{p\in \N} H_p(A_\ast) \otimes_\R H_{n-p}(D_\ast)$.

    If two cycles $x = \sum_{p=0}^n \sum_{b\in B_p} b\otimes x_{p,b}$
    and $y=\sum_{p=0}^n \sum_{b\in B_p} b\otimes y_{p,b}$ represent
    the same class in $H_n(A_*\otimes_\R D_*)$, then we obtain a
    boundary
    \[
    x-y = \sum_{p=0}^n \sum_{b\in B_p} b\otimes (x_{p,b} - y_{p,b}).
    \] 
    Since the boundary operator of $A_*$ is zero, this means that for
    all $p\in\lbrace 0,\ldots, n\rbrace$ and $b\in B_p$ the element
    $x_{p,b} - y_{p,b}$ is a boundary in $D_{n-p}$.  Hence, we have
    $[x_{p,b}] = [y_{p,b}]$. All in all we conclude that $\psi_n$ is
    well-defined.
  
    One can easily see that the composition $\varphi_n \circ \psi_n$
    is the identity for every~$n\in\N$.  For proving that also $\psi_n
    \circ\varphi_n$ is identity, let $a\in A_p$ and $d\in D_{n-p}$ be
    cycles with $p\in \lbrace 0,\ldots, n\rbrace$. Let
    $(\lambda_b)_{b\in B_p}$ be a family in $\R$ such that $a=
    \sum_{b\in B_p} \lambda_b \cdot b$.  We compute
    \begin{align*}
      \psi_n\circ\varphi_n \big([a]\otimes [d]\big) &= \psi_n\big([a\otimes d]\big) = \psi_n\Biggl(\Biggl[\sum_{b\in B_p} b\otimes (\lambda_b \cdot d)\biggr]\Biggr) \\
      &= \sum_{b\in B_p} [b]\otimes [\lambda_b \cdot d] = \sum_{b\in B_p} [\lambda_b \cdot b]\otimes [d] = [a]\otimes [d].
    \end{align*}
    Hence, $\psi_n\circ \varphi_n$ is identity, as it is identity on a
    generating set. Put together, this means that $\psi_n$ is the
    inverse of $\varphi_n$.

    It remains to show that $\psi_n$ is bounded.  We observe that the
    quotient semi-norm $\|\cdot\|_A$ on $H_*(A_*)$ in degree $n\in\N$
    is the truncated $\ell^1$-semi-norm associated to the basis $[B_n]
    := ([b])_{b\in B_n}$ and the subfamily $[B'_n] := ([b'])_{b'\in
      B'_n}$ as the boundary operator of $A_*$ is zero.  By
    Lemma~\ref{lemma:tensornormwithtruncted1norm}, we can compute the
    semi-norm~$\normsum{\cdot}$ of an element in $H_p(A_\ast)
    \otimes_\R H_{n-p}(D_\ast)$ (for $p\in\lbrace 1,\ldots,n\rbrace$)
    by looking at its unique representative given by the bases $[B_p]$
    of $H_p(A_*)$, i.e., we have
    \[ \biggl\|{\sum_{b\in B_p} [b]\otimes [y_{p,b}]}\biggr\|_\otimes = \sum_{b\in B'_p} \bigl\|[y_{p,b}]\bigr\|_D.
    \]
    Analogously, let $\alpha \in H_n(A_\ast\otimes_\R D_\ast)$. Again
    by Lemma~\ref{lemma:tensornormwithtruncted1norm}, we observe that
    \[
    \text{\makebox[0pt]{%
        $\normhom{\alpha}{A} = \inf\Bigg\lbrace \sum_{b\in B'_p} |y_{p,b}|_D  \Biggm| \sum_{p=0}^n \sum_{b\in B_p} b\otimes y_{p,b} \text{ with } y_{p,b}\in D_{n-p} \text{ is a representative of }\alpha\Bigg \rbrace.$}}
    \]
    For such a representative $\sum_{p=0}^n \sum_{b\in B_p} b\otimes y_{p,b}$
    of $\alpha$ we compute
    \begin{align*}
        \normsum{\psi_n(\alpha)} = \biggl\|{\sum_{p=0}^n \sum_{b\in B_p} [b]\otimes [y_{p,b}]}\biggr\|_\otimes = \sum_{p=0}^n \sum_{b\in B'_p} \bigl\|[y_{p,b}]\bigr\|_D \leq \sum_{b\in B'_p} |y_{p,b}|_D.
    \end{align*}
    Taking the infimum over all such representatives of $\alpha$
    yields the boundedness of~$\psi_n$.
  \end{proof}
  We now proved that the Künneth isomorphism and its inverse are
  bounded. This means that the Künneth isomorphism is bilipschitz.
\end{proof}

\begin{cor}
  \label{cor:Kuennethtopo}
  Let $X$, $Y$ be topological spaces such that $X$ has finite
  dimensional reduced homology in each degree, e.g. $X$ is a closed
  manifold or a CW-complex of finite type. Then, for each $n\in\N$,
  the topological Künneth isomorphism
  \begin{align*}
    \bigoplus_{p\in \N} H_p(X;\R) \otimes_\R H_{n-p}(Y;\R) &\to H_n(X\times Y;\R) \\
    \alpha\otimes \beta &\mapsto \alpha\times \beta
  \end{align*}
  is bilipschitz.
\end{cor}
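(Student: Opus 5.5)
We deduce Corollary~\ref{cor:Kuennethtopo} from the normed Künneth theorem (Theorem~\ref{thm:opnorm-Kunneth}). We apply it to $C_* := C_*(X;\R)$ and $D_* := C_*(Y;\R)$, both carrying the $\ell^1$-norms given by the singular simplices. The hypothesis that $\overline H_n(C_*)$ is finite dimensional in every degree holds because $\overline H_n(X;\R)$ is a quotient of $H_n(X;\R)$, which is finite dimensional by assumption. For closed manifolds and CW-complexes of finite type this is the remark in Example~\ref{exa:topspaces}.

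Theorem~\ref{thm:opnorm-Kunneth} then shows that the algebraic Künneth map
\[ \bigoplus_{p} H_p(X;\R)\otimes_\R H_{n-p}(Y;\R) \to H_n\bigl(C_*(X;\R)\otimes_\R C_*(Y;\R)\bigr) \]
is bilipschitz. Here the target carries the semi-norm induced by the $\ell^1$-sum of tensor norms, which by Example~\ref{cor:tensornormis1norm} is the $\ell^1$-norm for the basis of pairs $\sigma\otimes\tau$ of singular simplices.

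It remains to compare this target with $H_n(X\times Y;\R)$ and its $\ell^1$-semi-norm. We use the Eilenberg--Zilber chain homotopy equivalence. Its shuffle (cross product) map $\times\colon C_*(X)\otimes C_*(Y)\to C_*(X\times Y)$ sends $\sigma\otimes\tau$ with $\dim\sigma=p$, $\dim\tau=q$ to a signed sum of $\binom{p+q}{p}$ singular simplices. Hence it is bounded in degree $n$ by $\max_p\binom{n}{p}$. The Alexander--Whitney map $C_*(X\times Y)\to C_*(X)\otimes C_*(Y)$ sends a simplex $\rho$ to $\sum_p (\mathrm{pr}_X\rho)|_{[0,p]}\otimes(\mathrm{pr}_Y\rho)|_{[p,n]}$, so it has norm at most $n+1$.

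Both maps are chain maps inducing mutually inverse isomorphisms on homology, and both are defined over $\Z$. By Remark~\ref{rem:passboundedtohomology} (in its evident form for chain maps bounded in degree $n$) the induced homology isomorphisms are bounded in both directions, hence bilipschitz. Composing with the first step yields a bilipschitz isomorphism $\alpha\otimes\beta\mapsto[c\otimes d]\mapsto[c\times d]=\alpha\times\beta$, and this composite is exactly the topological Künneth isomorphism.

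The main point needing care is that the Eilenberg--Zilber maps are bounded with respect to the $\ell^1$-norms, and the explicit formulas give degree-wise bounds depending only on $n$. Everything else is a direct application of the theorem. One should also check that the cross product on homology agrees with the composite used, which holds by the definition of $\times$ via the shuffle map.
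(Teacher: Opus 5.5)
Your proposal is correct and follows the paper's proof. You apply Theorem~\ref{thm:opnorm-Kunneth} to the singular chain complexes, then use that the shuffle and Alexander--Whitney maps are bounded on chains and pass to homology via Remark~\ref{rem:passboundedtohomology}; your explicit bounds $\max_p\binom{n}{p}$ and $n+1$ just make precise what the paper leaves to ``explicit descriptions''.

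One minor slip: in this paper ``reduced homology'' means $\overline{H}_n(X;\R) = H_n(X;\R)/\Null(H_n(X;\R))$, not the usual reduced homology. So the theorem's hypothesis is literally the corollary's assumption, and $H_n(X;\R)$ itself need not be finite dimensional (e.g., an infinite wedge of circles); your detour through $H_n(X;\R)$ is therefore unjustified, but also unnecessary.
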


\begin{proof}
  As $X$ has finite dimensional reduced homology, the singular chain
  complexes $C_*(X;\R)$ and $C_*(Y;\R)$ together with the
  $\ell^1$-norms given by the bases of singular simplices fulfil the
  assumptions of Theorem~\ref{thm:opnorm-Kunneth}.  So the algebraic Künneth
  isomorphism for $C_*(X;\R)$ and $C_*(Y;\R)$ is bilipschitz and we
  reduce the claim to proving that, for every $n\in\N$, the
  isomorphism
  \begin{align*}
    H_n\bigl(C_*(X;\R)\otimes_\R C_*(Y;\R)\bigr) \to H_n(X\times Y;\R)
  \end{align*}
  induced by the Eilenberg--Zilber map is bilipschitz.
  Looking at explicit descriptions of the Eilenberg--Zilber map and
  its inverse~\cite[Exercises~VI.12.26]{dold_lectonalgtop}, we see
  that both are bounded on the level of chains.  This passes down to
  the level of homology with the induced semi-norm
  (Remark~\ref{rem:passboundedtohomology}).
\end{proof}

\begin{example}
  \label{exa:Kuennethhyperbolic}
  Let $M$ be an oriented, closed, connected hyperbolic manifold.  
  The surjectivity of the comparison map $H_b^n(M;\R)\to H^*(M;\R)$
  for $n=0$ and $n\geq 2$ shows that a class in the singular homology
  $H_n(M;\R)$ of $M$ has trivial norm if and only if the class is
  trivial or of degree $n=1$ \cite[8.3T]{gromov_hyperbolic}.
  
  Let $N$ be another oriented, closed, connected hyperbolic manifold.   
  By the norm control of the topological Künneth isomorphisms
  (Corollary~\ref{cor:Kuennethtopo}) we see that a class in
  $H_n(M\times N;\R)$ has trivial norm if and only if it is in the
  image of $H_1(M;\R)\otimes_\R H_{n-1}(N;\R) \oplus
  H_{n-1}(M;\R)\otimes_\R H_{1}(N;\R)$ under the Künneth isomorphism.

  Note that this can also be shown using duality results for the
  computation of the $\ell^1$-norm and the comparison maps for the
  product space $M\times N$.
\end{example}

The result about the norm control of the Künneth theorem can also be
used to control the norm of the product of bilipschitz chain maps.

\begin{cor}
  \label{cor:productofbilip}
  Let $X$, $X'$, $Y$, $Y'$ be topological spaces such that $X$ and
  $X'$ have finite dimensional reduced homology. 
  Let $f\colon X\to X'$ and $g\colon Y\to Y'$ be continuous maps. Let
  $n\in\N$ such that $H_p(f;\R)$ and $H_q(g;\R)$ are bilipschitz
  with respect to the $\ell^1$-semi-norm for all $p,q\in\lbrace
  0,\ldots, n\rbrace$. 
  Then the induced map $H_n(f\times g;\R) \colon H_n(X\times Y;\R) \to
  H_n(X'\times Y';\R)$ is also bilipschitz.
\end{cor}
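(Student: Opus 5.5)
The plan is to use naturality of the topological Künneth isomorphism (Corollary~\ref{cor:Kuennethtopo}) to reduce the claim to the algebraic map induced on the direct sum of tensor products of homologies. For the two sides, write
\[ \kappa \colon \bigoplus_{p} H_p(X;\R)\otimes_\R H_{n-p}(Y;\R) \to H_n(X\times Y;\R) \]
and $\kappa'$ for the analogous map for~$X'\times Y'$. Naturality of the cross product gives
\[ H_n(f\times g;\R)\circ\kappa = \kappa' \circ \bigoplus_p H_p(f;\R)\otimes H_{n-p}(g;\R). \]
By Corollary~\ref{cor:Kuennethtopo}, both $\kappa$ and $\kappa'$ are bilipschitz; here we use that $X$ and $X'$ have finite dimensional reduced homology. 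It therefore suffices to show that the middle map $\Phi := \bigoplus_p H_p(f;\R)\otimes H_{n-p}(g;\R)$ is bilipschitz with respect to~$\normsum{\cdot}$, since $H_n(f\times g;\R) = \kappa'\circ\Phi\circ\kappa^{-1}$.

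For $\Phi$, the upper bound is formal. If $u\colon V\to V'$ and $v\colon W\to W'$ are bounded linear maps of semi-normed spaces, then $\|u\otimes v\|\le\|u\|\cdot\|v\|$ for the projective tensor semi-norm: take a representation $z=\sum x_i\otimes y_i$ and estimate $\sum\|u x_i\|\cdot\|v y_i\|\le \|u\|\,\|v\|\sum\|x_i\|\,\|y_i\|$. Since $\normsum{\cdot}$ is an $\ell^1$-sum over the finitely many summands $p\in\{0,\dots,n\]$, $\Phi$ is bounded.

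For the lower bound, I read ``bilipschitz'' as an estimate $\|H_p(f;\R)(x)\|\ge c\,\|x\|$ with $c>0$, i.e.\ the maps are injective up to null spaces. If bilipschitz includes being an isomorphism, the inverse is bounded and the previous paragraph applied to $\Phi^{-1}=\bigoplus H_p(f)^{-1}\otimes H_{n-p}(g)^{-1}$ finishes the proof. I will first check that this is the paper's convention, since it is consistent with Theorem~\ref{thm:opnorm-Kunneth}, where the map is an isomorphism.

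If only the lower Lipschitz estimate is available, this step is the main obstacle: projective tensor norms do not behave well under subspace embeddings in general. I would try to handle it using the finite dimensionality of $\overline H_p(X;\R)$ and $\overline H_p(X';\R)$. By Lemma~\ref{lemma:seminormequiv}, replace the semi-norms on $H_p(X;\R)$ and $H_p(X';\R)$ by equivalent truncated $\ell^1$-semi-norms, adapted so that $H_p(f;\R)$ maps the basis part $B'$ (finite) into a basis part for $X'$. This is possible because a lower Lipschitz bound forces $H_p(f)$ to be injective on $\overline H_p(X)$, so the images can be extended to a basis of a complement of the null space. Then Lemma~\ref{lemma:tensornormwithtruncted1norm} computes both tensor semi-norms explicitly as $\sum_{b\in B'}\|z_b\|_Y$. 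For $x=\sum_b b\otimes z_b$, this gives
\[ \|\Phi x\| \ge \sum_{b\in B'} \bigl\|H_{n-p}(g)z_b\bigr\| \ge c\,\|x\|, \]
using the lower bound for~$g$, at least up to the equivalence constants.
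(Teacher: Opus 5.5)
Your proposal is correct and follows the paper's proof. You reduce via naturality of the cross product and Corollary~\ref{cor:Kuennethtopo} to the summands $H_p(f;\R)\otimes_\R H_{n-p}(g;\R)$, bound these formally from above, and get the lower bound from truncated $\ell^1$-semi-norms adapted to the map together with Lemma~\ref{lemma:tensornormwithtruncted1norm}; this is exactly Lemma~\ref{lem:tensorbilip}.

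On the convention question: the paper uses your second reading, a two-sided estimate that need not be an isomorphism (its proof of Lemma~\ref{lem:tensorbilip} works with $\ker(f)$ and $\im(f)$). So your adapted-basis argument is the one actually needed, and it is valid. The only difference is cosmetic: the paper adapts just the target basis to $\im(f)$ and $\Null(X')$ and lifts basis vectors to preimages modulo $\ker(f)\otimes_\R Y$, whereas you truncate on both sides so that both tensor semi-norms are computed exactly.
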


\begin{proof}
  Since $X$ and $X'$ have finite dimensional reduced homology, we can
  apply that the topological Künneth isomorphism is bilipschitz,
  Corollary~\ref{cor:Kuennethtopo}, to reduce the problem to showing
  that the map
  \[
  \text{%
  \makebox[0pt]{%
  $\bigoplus_{p\in\N} H_p(f;\R) \otimes_\R H_{n-p}(g;\R)\colon \bigoplus_{p\in\N} H_p(X;\R) \otimes_\R H_{n-p}(Y;\R) \to \bigoplus_{p\in\N} H_p(X';\R) \otimes_\R H_{n-p}(Y';\R)$
  }}
  \]
  is bilipschitz. As there are only finitely many non-trivial
  summands, it suffices to prove that for every $p\leq n$ the map
  $H_p(f;\R) \otimes_\R H_{n-p}(g;\R)$ is bilipschitz. As $H_p(f;\R)$
  and $H_{n-p}(g;\R)$ are bilipschitz by assumption this follows from
  the following lemma.
\end{proof}

\begin{lemma}
  \label{lem:tensorbilip}
  Let $X$, $X'$, $Y$, $Y'$ be semi-normed $\R$-vector spaces such that
  $X'/\Null(X')$ is finite dimensional.
  Let $f\colon X\to X'$ and $g\colon Y\to Y'$ be bilipschitz linear maps. 
  Then also $f\otimes_\R g\colon X\otimes_\R Y\to X'\otimes_\R Y'$ is bilipschitz.
\end{lemma}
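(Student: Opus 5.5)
The plan is to factor $f\otimes_\R g=(f\otimes_\R \id_{Y'})\circ(\id_X\otimes_\R g)$ and to show that both factors are bilipschitz. Here I read bilipschitz as: there are constants $0<c\leq C$ with $c\cdot\|x\|\leq\|h(x)\|\leq C\cdot\|x\|$ for all $x$.

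The upper bounds are immediate from the definition of the tensor semi-norm. Applying $h\otimes k$ to a representative $\sum_i x_i\otimes y_i$ gives a representative of the image of norm at most $\|h\|\cdot\|k\|\cdot\sum_i\|x_i\|\cdot\|y_i\|$. Hence $\|h\otimes k\|\leq\|h\|\cdot\|k\|$. The same computation shows a second fact I will use: if two semi-norms on a factor are equivalent, then the induced tensor semi-norms are equivalent.

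For the lower bounds, the first step is to transport the finiteness hypothesis to~$X$. The lower Lipschitz bound gives $f^{-1}(\Null(X'))=\Null(X)$, and the upper bound gives $f(\Null(X))\subset\Null(X')$. So $f$ induces an injection $X/\Null(X)\to X'/\Null(X')$, and $X/\Null(X)$ is finite dimensional. I then choose adapted bases. Take a basis $B_N$ of $\Null(X)$ and a finite family $B'_X$ completing it to a basis $B_X$ of~$X$. The family $f(B'_X)$ is linearly independent modulo $\Null(X')$. Extend $f(B'_X)$ by a finite family $E$ to a basis of $X'$ modulo $\Null(X')$, and then add a basis of $\Null(X')$ to get a basis $B_{X'}$ of~$X'$. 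As in the proof of Lemma~\ref{lemma:seminormequiv}, the semi-norm of $X$ is equivalent to $\|\cdot\|_1^{B_X,B'_X}$. Likewise, the semi-norm of $X'$ is equivalent to $\|\cdot\|_1^{B_{X'},f(B'_X)\cup E}$, because both have null space $\Null(X')$ and the quotient is finite dimensional. By the second fact above, I may replace the norms on $X$ and $X'$ by these truncated $\ell^1$-semi-norms when working with the tensor semi-norms. This changes everything only up to constants.

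With these norms, Lemma~\ref{lemma:tensornormwithtruncted1norm} computes all tensor norms explicitly. For $w=\sum_{b\in B_X}b\otimes w_b\in X\otimes_\R Y$, we get
\[ \bigl\|(\id_X\otimes g)(w)\bigr\|_\otimes=\sum_{b\in B'_X}\bigl\|g(w_b)\bigr\|\geq c_g\cdot\sum_{b\in B'_X}\|w_b\|=c_g\cdot\|w\|_\otimes. \]
For $u=\sum_{b\in B_X}b\otimes u_b\in X\otimes_\R Y'$, the terms with $b\in B_N$ map into $\Null(X')\otimes Y'$, since $f(b)$ lies in the span of the null part of $B_{X'}$. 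These terms therefore contribute nothing to the coefficients on $f(B'_X)\cup E$. Hence
\[ \bigl\|(f\otimes\id_{Y'})(u)\bigr\|_\otimes=\sum_{b\in B'_X}\|u_b\|=\|u\|_\otimes. \]
Combining the two estimates with the equivalence constants gives the lower bound for $f\otimes_\R g$.

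The step I expect to be the main obstacle is the choice of adapted bases. The basis of $X'$ must contain $f(B'_X)$ in its non-null part, and it must put the image of $\Null(X)$ inside the null part. Only then does the explicit formula of Lemma~\ref{lemma:tensornormwithtruncted1norm} turn the bound for $f\otimes\id$ into an equality. This is also the only point where finite dimensionality of $X'/\Null(X')$ is really needed, through the equivalence of semi-norms.
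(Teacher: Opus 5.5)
Your proof is correct, and it is organised differently from the paper's.

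The paper does not factor $f\otimes_\R g$, and it renormalises only $X'$. It picks a basis $B$ of $X'$ that contains both a basis $B_{\Null}$ of $\Null(X')$ and a basis $B_f$ of $\im(f)$, built from a common basis of $\Null(X')\cap\im(f)$, and sets $B':=B\setminus B_{\Null}$. It then fixes preimages $x_b$ of the $b\in B_f$ and writes $z\equiv\sum_{b\in B_f}x_b\otimes y_b$ modulo the image of $\ker(f)\otimes_\R Y$, which consists of null elements. Both lower Lipschitz bounds are applied in one step: $\|z\|_\otimes\le\sum_b\|x_b\|\,\|y_b\|\le c_fc_g\sum_b\|b\|\,\|g(y_b)\|=c_fc_g\,\|(f\otimes_\R g)(z)\|_\otimes$. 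Lemma~\ref{lemma:tensornormwithtruncted1norm} is used only on the $X'$-side.

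You instead first transport finiteness to $X$ via $f^{-1}(\Null(X'))=\Null(X)$. You then renormalise both $X$ and $X'$ with bases adapted to $f$: non-null basis vectors of $X$ go to non-null basis vectors of $X'$, and $\Null(X)$ goes into the null part. Lemma~\ref{lemma:tensornormwithtruncted1norm} then gives lower constant $c_g$ for $\id_X\otimes_\R g$ and makes $f\otimes_\R\id_{Y'}$ an isometry.

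Your route is more modular and shows exactly where finite dimensionality is used. Non-injectivity of $f$ is absorbed automatically, since $\ker f\subset\Null(X)$. The paper's route avoids the detour through $X/\Null(X)$ and the two-sided renormalisation. The price is that it must handle $\ker(f)$ and $\im(f)\cap\Null(X')$ explicitly in its basis construction.
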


\begin{proof}
  Let $c_f$ and $c_g$ be bilipschitz constants for $f$ and $g$, respectively.
  Then, for $z\in X\otimes_\R Y$, we have $\|f\otimes_\R g (z)\|_\otimes \leq c_f\cdot c_g\|z\|_\otimes$:
  For every representative $\sum_{i\in I} x_i\otimes y_i$ of $z$, we compute
  \begin{align*}
   \bigl\|f\otimes_\R g (z) \bigr\|_\otimes &= \biggl\| \sum_{i\in I} f(x_i)\otimes g(y_i)\biggr\|_\otimes \leq \sum_{i\in I} \|f(x_i)\|_{X'}\cdot \|g(y_i)\|_{Y'} \\
   &\leq \sum_{i\in I} c_f\cdot c_g \cdot \|x_i\|_X\cdot \|y_i\|_Y. 
  \end{align*}
  Taking the infimum over all representatives of $z$ gives the desired inequality.

  For the lower bound, recall that the space $X'/\Null(X')$ is finite
  dimensional by assumption. Hence, all semi-norms on $X'$ respecting
  the null space $\Null(X')$ are equivalent by
  Lemma~\ref{lemma:seminormequiv}. 
  So without loss of generality we can assume that the semi-norm on
  $X'$ is a truncated $\ell^1$-semi-norm associated to a basis $B$ and
  a subfamily $B'\subset B$ compatible with $\im(f)$ and $\Null(X')$
  in the following way: 
  Let $B''$ be a basis of~$\Null(X')\cap \im(f)$. We extend $B''$ on
  the one hand to a basis $B_{\Null}$ of $\Null(X')$ and on the other
  hand to a basis $B_f$ of $\im(f)$. As $B_{\Null}\cap B_f = B''$ is a
  basis of $\Null(X')\cap \im(f)$, we know that $B_{\Null}\cup B_f$ is
  a basis of $\Null(X')\cup \im(f)$. 
  Hence, we can extend $B_{\Null}\cup B_f$ to a basis $B$ of $X'$ and
  define~$B':=B\setminus B_{\Null}$. 
  By construction, $B$ has both $B_{\Null}$ and $B_f$ as subfamily. 
  For $b\in B_f$ we fix a preimage $x_b\in X$. 
  For an element~$z\in X\otimes_\R Y$ there is a family $(y_b)_{b\in B_f}$ in $Y$ such that 
  \[  z = \sum_{b\in B_f} x_b\otimes y_b \mod \im\bigl(\ker(f)\otimes_\R Y\to X\otimes_\R Y\bigr).
  \]
  So we obtain
  \begin{align}
    \label{eq:reprofw}
    f\otimes_\R g(z) = \sum_{b\in B_f} f(x_b)\otimes g(y_b) = \sum_{b\in B_f} b\otimes g(y_b).
  \end{align} 
  Since elements in~$\ker(f)$ and hence elements in
  $\im\bigl(\ker(f)\otimes_R Y\to X\otimes_\R Y\bigr)$ have norm zero,
  we obtain
  \begin{align*}
    \|z\|_\otimes &\leq \sum_{b\in B_f} \|x_b\|_X\cdot \|y_b\|_Y \leq c_f\cdot c_g\cdot \sum_{b\in B_f} \|f(x_b)\|_{X'}\cdot \|g(y_b)\|_{Y'} \\
    &= c_f\cdot c_g \cdot \sum_{b\in B_f} \|b\|_{X'}\cdot \|g(y_b)\|_{Y'}.
  \end{align*}
  As we chose the norm on $X'$ to be the $\ell^1$-semi-norm associated
  to $B$ and $B'$ and as $B_f$ is a subfamily of $B$, we obtain
   \begin{align*}
    \|z\|_\otimes \leq c_f\cdot c_g \cdot \sum_{b\in B_f} \|b\|_X\cdot \|g(y_b)\|_Y =c_f\cdot c_g \cdot \|f(z)\|_\otimes
  \end{align*}
  using Lemma~\ref{lemma:tensornormwithtruncted1norm} and
  Equation~\ref{eq:reprofw}.  Thus, $f\otimes_\R g$ is bilipschitz.
\end{proof}

Replacing ``bilipschitz'' by ``isometric'' in the previous results
gives rise to the following question.

\begin{question}
  Let $X$, $X'$, $Y$, $Y'$ be topological spaces. 
  Let $f\colon X\to X'$ and $g\colon Y\to Y'$ be continuous maps. 
  Let $n\in\N$ such that $H_p(f;\R)$ and $H_q(g;\R)$ are isometric
  with respect to the $\ell^1$-semi-norm for all $p,q\in\lbrace
  0,\ldots, n\rbrace$.
  Does it follow that $H_n(f\times g;\R)\colon H_n(X\times Y;\R) \to
  H_n(X'\times Y';\R)$ is isometric with respect to the
  $\ell^1$-semi-norm?
\end{question}

In our proof of Corollary~\ref{cor:productofbilip} we use that the
Künneth isomorphism is bilipschitz
(Theorem~\ref{thm:opnorm-Kunneth}). Using our techniques, there is no
control over the bilipschitz constant of the Künneth isomorphism. 
So this approach is not suited to address the question above.

%%%%%%%%%%%%%%%%%%%%%%%%%%%%%%%%%%%%%%%%%%%%%%%%%%%%%%%
% A"I" declaration
\subsection*{Declaration}

We have not used any LLM~tools in the research for or writing  
of this article.

\enlargethispage{1cm}
%%%%%%%%%%%%%%%%%%%%%%%%%%%%%%%%%%%%%%%%%%%%%%%%%%%%%%%%%%%%%%%%%%%%%
%%%Bibliography:
{\small
\bibliographystyle{alpha}
\bibliography{bib}}

@article {vbc,
    AUTHOR = {Gromov, Michael},
     TITLE = {Volume and bounded cohomology},
   JOURNAL = {Inst. Hautes \'{E}tudes Sci. Publ. Math.},
  FJOURNAL = {Institut des Hautes \'{E}tudes Scientifiques. Publications
              Math\'{e}matiques},
    NUMBER = {56},
      YEAR = {1982},
     PAGES = {5--99 (1983)},
      ISSN = {0073-8301},
   MRCLASS = {53C20 (53C21 57R99 58E99)},
  MRNUMBER = {686042},
MRREVIEWER = {Karsten Grove},
       URL = {http://www.numdam.org/item?id=PMIHES_1982__56__5_0},
}

@article{CDMW,
  author={Chambers, Gregory R. and Dotterrer, Dominic and Manin, Fedor and Weinberger, Shmuel},
  title={Quantitative null-cobordism},
  note={With an appendix by F.~Manin and S.~Weinberger},
  journal={J.~Amer.\ Math.\ Soc.},
  volume={31},
  year={2018},
  number={4},
  pages={1165--1203},
}

@article {heuer_loeh_onerel,
    AUTHOR = {Heuer, Nicolaus and L{\"o}h, Clara},
     TITLE = {Simplicial volume of one-relator groups and stable commutator
              length},
   JOURNAL = {Algebr. Geom. Topol.},
  FJOURNAL = {Algebraic \& Geometric Topology},
    VOLUME = {22},
      YEAR = {2022},
    NUMBER = {4},
     PAGES = {1615--1661},
      ISSN = {1472-2747,1472-2739},
   MRCLASS = {20F65 (20E05 20F05 20F06 20J05 57M07)},
  MRNUMBER = {4495666},
MRREVIEWER = {Michael\ Hull},
       DOI = {10.2140/agt.2022.22.1615},
       URL = {https://doi.org/10.2140/agt.2022.22.1615},
}

@article {heuer_loeh_trans,
    AUTHOR = {Heuer, Nicolaus and L{\"o}h, Clara},
     TITLE = {Transcendental simplicial volumes},
   JOURNAL = {Ann.\ Inst.\ Fourier (Grenoble)},
  FJOURNAL = {Universit{\'e} de Grenoble. Annales de l'Institut Fourier},
    VOLUME = {74},
      YEAR = {2024},
    NUMBER = {2},
     PAGES = {763--781},
      ISSN = {0373-0956,1777-5310},
   MRCLASS = {57N65 (03D78 11J86 20J06 57M07)},
  MRNUMBER = {4748185},
       DOI = {10.5802/aif.3597},
       URL = {https://doi.org/10.5802/aif.3597},
}

@article {heuer_loeh_spectrum,
    AUTHOR = {Heuer, Nicolaus and L{\"o}h, Clara},
     TITLE = {The spectrum of simplicial volume},
   JOURNAL = {Invent. Math.},
  FJOURNAL = {Inventiones Mathematicae},
    VOLUME = {223},
      YEAR = {2021},
    NUMBER = {1},
     PAGES = {103--148},
      ISSN = {0020-9910,1432-1297},
   MRCLASS = {57N65 (20J05 57M07)},
  MRNUMBER = {4199441},
MRREVIEWER = {Andrea\ Tamburelli},
       DOI = {10.1007/s00222-020-00989-0},
       URL = {https://doi.org/10.1007/s00222-020-00989-0},
}

@book {brown_cohomofgroups,
	AUTHOR = {Brown, Kenneth S.},
	TITLE = {Cohomology of groups},
	SERIES = {Graduate Texts in Mathematics},
	VOLUME = {87},
	NOTE = {Corrected reprint of the 1982 original},
	PUBLISHER = {Springer-Verlag, New York},
	YEAR = {1994},
	PAGES = {x+306},
	ISBN = {0-387-90688-6},
	MRCLASS = {20J05 (20-02)},
	MRNUMBER = {1324339},
}

@article{bader_sauer,
  author = {Bader, Uri and Sauer, Roman},
  title  = {Uniform waist inequalities in codimension two for manifolds with
            {K}azhdan fundamental group},
  year   = {2026},
  volume = {36},
  pages = {981--997},
  journal = {Geom.\ Funct.\ Anal.},
}

@book {dold_lectonalgtop,
	AUTHOR = {Dold, Albrecht},
	TITLE = {Lectures on Algebraic Topology},
  SERIES = {Die Grundlehren der mathematischen Wissenschaften},
  VOLUME = {Band 200},
  PUBLISHER = {Springer-Verlag, New York-Berlin},
  YEAR = {1972},
  PAGES = {xi+377},
	ISBN = {0-387-05777-3},
	MRCLASS = {55-02},
  MRNUMBER = {415602},
}

@article {bucher,
    AUTHOR = {Bucher-Karlsson, Michelle},
     TITLE = {The simplicial volume of closed manifolds covered by {$\Bbb
              H^2\times\Bbb H^2$}},
   JOURNAL = {J. Topol.},
  FJOURNAL = {Journal of Topology},
    VOLUME = {1},
      YEAR = {2008},
    NUMBER = {3},
     PAGES = {584--602},
      ISSN = {1753-8416},
   MRCLASS = {53C20 (20J06 57M50)},
  MRNUMBER = {2417444},
MRREVIEWER = {Johannes F. Ebert},
       DOI = {10.1112/jtopol/jtn012},
       URL = {https://doi.org/10.1112/jtopol/jtn012},
}

@incollection {gromov_hyperbolic,
    AUTHOR = {Gromov, M.},
     TITLE = {Hyperbolic groups},
 BOOKTITLE = {Essays in group theory},
    SERIES = {Math. Sci. Res. Inst. Publ.},
    VOLUME = {8},
     PAGES = {75--263},
 PUBLISHER = {Springer, New York},
      YEAR = {1987},
      ISBN = {0-387-96618-8},
   MRCLASS = {20F32 (20F06 20F10 22E40 53C20 57R75 58F17)},
  MRNUMBER = {919829},
MRREVIEWER = {Christopher\ W.\ Stark},
       DOI = {10.1007/978-1-4613-9586-7\_3},
       URL = {https://doi.org/10.1007/978-1-4613-9586-7_3},
}

%%%%%%%%%%%%%%%%%%%%%%%%%%%%%%%%%%%%%%%%%%%%%%%%%%%%%%%%%%%%%%%%%%%%%
% author info
\vfill

\noindent
  \emph{Franziska Hofmann}\\[.5em]
  {\small
  \begin{tabular}{@{\qquad}l}
    Fakult\"at f\"ur Mathematik,
    Universit\"at Regensburg,
    93040 Regensburg\\
    %Germany\\
    \textsf{franziska2.hofmann@mathematik.uni-r.de}, 
    \url{https://homepages.uni-regensburg.de/~hof61178/}
  \end{tabular}}

\smallskip

\noindent
\emph{Clara L\"oh}\\[.5em]
  {\small
  \begin{tabular}{@{\qquad}l}
    Fakult\"at f\"ur Mathematik,
    Universit\"at Regensburg,
    93040 Regensburg\\
    %Germany\\
    \textsf{clara.loeh@mathematik.uni-r.de}, 
    \url{https://loeh.app.ur.de}
  \end{tabular}}
\end{document}